\long\def\commentout#1{}
\numberwithin{equation}{section}
\newtheoremstyle{slplain}
  {\topsep}
  {\topsep}
  {\slshape}
  {0pt}
  {\bfseries}
  {.}
  {0.5em}
  {}
\theoremstyle{slplain}
  \newtheorem{THM}{Theorem}[section]
  \newtheorem{LEM}[THM]{Lemma}
  \newtheorem{COR}[THM]{Corollary}
\theoremstyle{definition}
\newcommand{\Fraisse}{Fra\"\i ss\'e}
\newcommand\nlongrightarrow{\longrightarrow\kern -1.45em/\kern 0.9em}
\newcommand{\UNION}{\bigcup}
\renewcommand{\le}{\leqslant}
\renewcommand{\ge}{\geqslant}
\newcommand{\0}{\varnothing}
\renewcommand{\sec}{\cap}
\renewcommand{\phi}{\varphi}
\renewcommand{\epsilon}{\varepsilon}
\newcommand{\NN}{\mathbb{N}}
\newcommand{\QQ}{\mathbb{Q}}
\newcommand{\ZZ}{\mathbb{Z}}
\newcommand{\union}{\cup}
\newcommand{\restr}[2]{\hbox{$#1$}\hbox{$\upharpoonright$}_{#2}}
\newcommand{\Boxed}[1]{\mbox{$#1$}}
\newcommand{\id}{\mathrm{id}}
\newcommand{\out}{\mathrm{out}}
\newcommand{\high}{\mathrm{ht}}
\newcommand{\succs}{\mathrm{succ}}
\newcommand{\fin}{\mathrm{fin}}
\newcommand{\Fin}{\mathrm{Fin}}
\newcommand{\nat}{\mathrm{nat}}
\newcommand{\calA}{\mathcal{A}}
\newcommand{\calH}{\mathcal{H}}
\newcommand{\calP}{\mathcal{P}}
\newcommand{\calS}{\mathcal{S}}
\newcommand{\calT}{\mathcal{T}}
\newcommand{\tp}{\mathrm{tp}}
\newcommand{\Emb}{\mathrm{Emb}}
\newcommand{\im}{\mathrm{im}}
\newcommand{\spec}{\mathrm{spec}}
\newcommand{\tree}[1]{\langle#1\rangle}
\title{Big Ramsey spectra of countable chains}
\author{%
  Dragan Ma\v sulovi\'c\\
  University of Novi Sad, Faculty of Sciences\\
  Department of Mathematics and Informatics\\
  Trg Dositeja Obradovi\'ca 3, 21000 Novi Sad, Serbia\\
  e-mail: dragan.masulovic@dmi.uns.ac.rs}
\begin{document}
\maketitle

\begin{abstract}
    A big Ramsey spectrum of a countable chain (i.e.\ strict linear order) $C$ is a sequence
    of big Ramsey degrees of finite chains computed in $C$. In this paper we consider big Ramsey
    spectra of countable scattered chains. We prove that
    countable scattered chains of infinite Hausdorff rank do not have finite big Ramsey spectra,
    and that countable scattered chains of finite Hausdorff rank with bounded finite sums
    have finite big Ramsey spectra.
    Since big Ramsey spectra of all non-scattered countable chains are finite by results of Galvin, Laver and Devlin,
    in order to complete the characterization of countable chains with finite big Ramsey spectra (or degrees)
    one still has to resolve the remaining case of countable scattered chains of finite Hausdorff rank
    whose finite sums are not bounded.

  \bigskip

  \noindent \textbf{Key Words:} countable scattered chains, big Ramsey degrees, big Ramsey spectrum

  \noindent \textbf{AMS Subj.\ Classification (2010): 05D10, 06A05}
\end{abstract}

\section{Introduction}

Ramsey's famous theorem:

\begin{THM}[Ramsey's Theorem \cite{Ramsey}]\label{fbrd-scat-all.thm.RAMSEY}
  For any $n \ge 1$ and an arbitrary coloring $\chi : \binom \omega n \to k$
  of $n$-element subsets of $\omega$ with $k \ge 2$ colors there exists a copy
  $M \subseteq \omega$ of $\omega$ which is \emph{monochromatic} in the following sense: $\chi(X) = \chi(Y)$
  for all $X, Y \in \binom M n$.
\end{THM}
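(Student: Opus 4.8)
The plan is to proceed by induction on $n$, exploiting the fact that a ``copy of $\omega$'' is simply an infinite subset of $\omega$ (every infinite subset of $\omega$ is order-isomorphic to $\omega$), so the statement applies verbatim to any infinite ground set. For the base case $n = 1$, the coloring $\chi : \omega \to k$ partitions $\omega$ into $k$ color classes; since $\omega$ is infinite and $k$ is finite, the pigeonhole principle yields an infinite class $M$, which is the desired monochromatic copy. The substantive work is the inductive step, for which I would assume the full statement for $n-1$ (applicable to any infinite subset of $\omega$, each being a copy of $\omega$).

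For the inductive step, given $\chi : \binom \omega n \to k$, I would build a strictly increasing sequence $a_0 < a_1 < \cdots$ together with a nested sequence of infinite sets $\omega = A_0 \supseteq A_1 \supseteq \cdots$ by recursion. Having chosen an infinite $A_i$, set $a_i = \min A_i$ and define the induced coloring $\chi_i : \binom{A_i \setminus \{a_i\}}{n-1} \to k$ by $\chi_i(Y) = \chi(\{a_i\} \cup Y)$. The inductive hypothesis, applied to the infinite set $A_i \setminus \{a_i\}$, produces an infinite $A_{i+1} \subseteq A_i \setminus \{a_i\}$ on which $\chi_i$ is constant with some value $c_i \in k$. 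The key feature secured by this diagonalization is that the color of an $n$-set whose least element is $a_i$ depends only on $a_i$ and not on the later points completing it: for every $i$ and every $(n-1)$-subset $Y$ of the tail $\{a_{i+1}, a_{i+2}, \dots\} \subseteq A_{i+1}$ one has $\chi(\{a_i\} \cup Y) = c_i$.

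It then remains to apply the $n=1$ case to the sequence of tail colors. Since each $c_i$ lies in the finite set $k$, the pigeonhole principle gives an infinite index set $I \subseteq \omega$ on which $c_i$ takes a single value $c$. I claim $M = \{a_i : i \in I\}$ is monochromatic: any $X \in \binom M n$ can be written as $\{a_{i_0}\} \cup Y$, where $a_{i_0}$ is the least element of $X$ and $Y$ is the remaining $(n-1)$-subset, which lies in the tail above $a_{i_0}$; hence $\chi(X) = c_{i_0} = c$. The step I expect to require the most care is organizing the recursion so that the inductive hypothesis is legitimately applicable at every stage, i.e.\ verifying that each $A_i$ remains infinite; this is automatic because each $A_{i+1}$ is produced as an infinite monochromatic set by the $(n-1)$-dimensional case. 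The genuinely essential idea is precisely this diagonalization, which decouples the color attached to $a_i$ from the choice of completing points and thereby reduces the $n$-dimensional problem to a pigeonhole argument on the colors $c_i$.
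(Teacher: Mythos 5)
Your proof is correct: it is the classical induction-on-$n$ argument with the thinning/diagonalization step that makes the color of an $n$-set depend only on its least element, followed by pigeonhole on the colors $c_i$. The paper itself offers no proof of this statement --- it is quoted as Ramsey's 1930 theorem with a citation --- so there is nothing to compare against; your argument is the standard one and all steps (strict increase of the $a_i$, infiniteness of each $A_i$, membership of the tail $\{a_{i+1}, a_{i+2}, \dots\}$ in $A_{i+1}$) check out.
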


\noindent
was published in 1930, but already in 1933 it was generalized to cardinals by Sierpi\'nski.
This marked the beginning of combinatorial set theory which is nowadays a deep and influential part of set
theory (see~\cite{williams}). In contrast to Ramsey theory which abounds with positive results of the form
``for any coloring there is a monochromatic copy'', the generalization of Ramsey's Theorem to cardinals
brought a plethora of negative or conditionally positive results of the form
``there is a complicated coloring such that no monochromatic copy exists''
or ``for any coloring there is a monochromatic copy provided we exclude a certain type of behavior''.

Scaling down to countable chains does not take us back to the realm where monochromatic copies dwell.
It is easy to construct a Sierpi\'nski-style coloring of $\binom \QQ 2$ with two colors
and with no monochromatic subchain isomorphic to~$\QQ$. However, Galvin showed in \cite{galvin1,galvin2} that
for every coloring $\chi : \binom \QQ 2 \to k$, $k \ge 2$, there is an \emph{oligochromatic} copy of $\QQ$
in the following sense: there is
a $U \subseteq \QQ$ order-isomorphic to $\QQ$ such that $\chi$ takes at most two colors on $\binom U 2$.
This observation was later generalized by Laver (see \cite[Theorem on p.~275]{erdos-hajnal}) who proved that
for every finite $n$ and every coloring $\chi : \binom \QQ n \to k$, $k \ge 2$, there is an
oligochromatic copy of $\QQ$ with at most $\mathbb{T}_n$ colors, where $\mathbb{T}_n$ depends only on~$n$.
In his thesis~\cite{devlin} Devlin actually managed to compute the numbers $\mathbb{T}_n$ and it turns out that
$\mathbb{T}_n = \tan^{(2n-1)}(0)$.

The integer $\mathbb{T}_n$ is referred to as the \emph{big Ramsey degree of~$n$ in~$\QQ$}.
This term was coined by Kechris, Pestov and Todor\v cevi\'c \cite{KPT} where big Ramsey degrees were
considered under this name in the context of structural Ramsey theory of \Fraisse\ limits.
In particular, an integer $T \ge 1$ is a \emph{big Ramsey degree of a finite chain $n$ in a chain $A$}
if it is the smallest positive integer such that
for every coloring $\chi : \binom A n \to k$ where $k \ge 2$
there is a $U \subseteq A$ order-isomorphic to $A$ such that $\chi$ takes at most $T$
colors on $\binom U n$. If no such $T$ exists we say that \emph{$n$ does not have big Ramsey degree in $A$}.
We denote the big Ramsey degree of $n$ in $A$ by $T(n, A)$, and write
$T(n, A) = \infty$ if $n$ does not have the big Ramsey degree in~$A$.

Clearly, for every $n \in \NN$ there is, up to isomorphism, only one chain of length~$n$.
Hence, for any chain $A$ it is convenient to consider the \emph{big Ramsey spectrum of $A$}:
$$
  \spec(A) = (T(1, A), T(2, A), T(3, A), \ldots) \in (\NN \union \{\infty\})^\NN.
$$
A chain $A$ \emph{has finite big Ramsey spectrum} if $T(n, A) < \infty$ for all $n \ge 1$,
that is, if $\spec(A) \in \NN^\NN$.
In this parlance the Ramsey's theorem and the results of Galvin and Devlin take the following form:

\begin{THM}
  $(a)$ (Ramsey \cite{Ramsey}) $\spec(\omega) = (1, 1, 1, \ldots)$.
  
  $(b)$ (Galvin \cite{galvin1,galvin2}) $T(2, \QQ) = 2$.

  $(c)$ (Devlin \cite{devlin}) $\spec(\QQ) = (\mathbb{T}_1, \mathbb{T}_2, \mathbb{T}_3, \ldots)$, which coincides with
  the OEIS sequence A000182.
\end{THM}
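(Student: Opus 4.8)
All three parts merely restate cited theorems in the language of big Ramsey spectra, so the plan is to unwind the definition of $T(n, A)$ in each case and invoke the corresponding classical result, supplying the routine lower bounds that the cited upper-bound statements leave implicit. For part~$(a)$ I would first note that every infinite $M \subseteq \omega$ is itself order-isomorphic to $\omega$, so the monochromatic copy furnished by Theorem~\ref{fbrd-scat-all.thm.RAMSEY} is precisely a $U \cong \omega$ on which $\chi$ is constant on $\binom U n$. Hence Ramsey's Theorem yields $T(n, \omega) \le 1$ for every $n \ge 1$, and since $T(n, \omega) \ge 1$ holds by definition, equality follows termwise, giving $\spec(\omega) = (1, 1, 1, \ldots)$.

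For part~$(b)$ I would establish the two inequalities separately. The upper bound $T(2, \QQ) \le 2$ is exactly Galvin's oligochromatic statement recalled above. For the lower bound $T(2, \QQ) \ge 2$, the plan is to exhibit an explicit $2$-coloring admitting no monochromatic copy of $\QQ$: fix a bijection $e \colon \omega \to \QQ$ and color a pair $a <_\QQ b$ according to whether $e^{-1}(a) < e^{-1}(b)$ or not. On any monochromatic $U$ the rational order would then embed either into $(\omega, <)$ or into its reverse, so $U$ would be scattered and in particular not dense, contradicting $U \cong \QQ$. Combining the two bounds gives $T(2, \QQ) = 2$.

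For part~$(c)$ the plan is once more to split into matching bounds: Laver's theorem supplies $T(n, \QQ) \le \mathbb{T}_n$, while Devlin's colorings requiring $\mathbb{T}_n$ colors supply $T(n, \QQ) \ge \mathbb{T}_n$, whence $\spec(\QQ) = (\mathbb{T}_1, \mathbb{T}_2, \ldots)$. The identification with OEIS sequence A000182 is then the analytic fact that the derivatives $\tan^{(2n-1)}(0)$ are exactly the tangent numbers. The only genuinely hard ingredient is Devlin's lower bound, which rests on a delicate analysis of the embedding types of finite chains into $\QQ$ coded by binary trees; I would treat it as a black box and cite~\cite{devlin}, since reproducing that argument lies well beyond the elementary verifications needed for the other parts.
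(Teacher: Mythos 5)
Your proposal is correct and matches the paper's treatment: the paper states this theorem purely as a collection of cited classical results with no proof of its own, and your unwinding of the definitions — Ramsey for the upper bound in $(a)$, Galvin plus the standard Sierpi\'nski coloring (which the paper itself alludes to in the introduction) for $(b)$, and Laver/Devlin for $(c)$ — is exactly the routine glue those citations presuppose. The only cosmetic point is that in $(c)$ the sharp value $\mathbb{T}_n = \tan^{(2n-1)}(0)$, including the upper bound with that exact constant, is due to Devlin rather than Laver, who only established finiteness of the bound; this matches the paper's own attribution.
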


For a chain $A$ let $A^*$ denote $A$ with the order reversed. It is obvious that
$\spec(A) = \spec(A^*)$ for all chains $A$. In particular, $\spec(\omega) = \spec(\omega^*)$.
Interestingly, $\omega$ and $\omega^*$ are the only countable chains whose spectrum is $(1, 1, 1, \ldots)$~\cite{rosenstein}.
We thus get the following strengthening of Ramsey's Theorem:

\begin{THM}[{\cite[Corollary 11.4]{rosenstein}}] Let $A$ be a countable chain.

  $(a)$ $T(2, A) = 1$ if and only if $A \cong \omega$ or $A \cong \omega^*$.
  
  $(b)$ Consequently, $\spec(A) = (1, 1, 1, \ldots)$ if and only if $A \cong \omega$ or $A \cong \omega^*$.
\end{THM}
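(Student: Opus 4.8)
The plan is to prove the stronger part $(a)$ first and then read off $(b)$. For the easy (``if'') direction of $(a)$: if $A \cong \omega$ then Ramsey's Theorem gives $\spec(\omega) = (1,1,1,\ldots)$, and in particular $T(2,\omega) = 1$; and if $A \cong \omega^*$ then, since reversing the order does not change the spectrum, $\spec(\omega^*) = \spec(\omega) = (1,1,1,\ldots)$, so again $T(2,A) = 1$. This disposes of one implication at once.

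For the ``only if'' direction I would argue by contraposition: assuming $A$ is a countably infinite chain with $A \not\cong \omega$ and $A \not\cong \omega^*$, I will exhibit a $2$-colouring of $\binom A 2$ admitting no monochromatic copy of $A$, which forces $T(2,A) \ge 2$. The colouring is the classical Sierpi\'nski colouring. Fix an enumeration $f : A \to \omega$ (possible since $A$ is countable) and, for a pair $\{x,y\}$ with $x <_A y$, set $\chi(\{x,y\}) = 0$ if $f(x) < f(y)$ and $\chi(\{x,y\}) = 1$ otherwise.

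The key step is to determine what a monochromatic copy must look like. Suppose $U \subseteq A$ with $U \cong A$ is monochromatic in colour $0$. Then $f(x) < f(y)$ whenever $x <_A y$ in $U$, so $f{\upharpoonright}U$ is an order-preserving injection of $(U, <_A)$ into $(\omega, <)$; its image is an infinite subset of $\omega$, hence of order type exactly $\omega$, and therefore $U \cong \omega$. As $U \cong A$ this gives $A \cong \omega$, contrary to assumption. Symmetrically, a copy monochromatic in colour $1$ makes $f{\upharpoonright}U$ order-reversing, so $(U, <_A)$ is anti-isomorphic to an infinite subset of $\omega$, whence $U \cong \omega^*$ and $A \cong \omega^*$, again a contradiction. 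Thus $\chi$ has no monochromatic copy of $A$ and $T(2,A) \ge 2$. Part $(b)$ is then immediate: if $\spec(A) = (1,1,1,\ldots)$ then $T(2,A) = 1$, so $A \cong \omega$ or $A \cong \omega^*$ by $(a)$; conversely, if $A \cong \omega$ or $A \cong \omega^*$ then $\spec(A) = \spec(\omega) = (1,1,1,\ldots)$ as noted above.

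The only genuinely delicate point is the order-type observation invoked in the key step, namely that every infinite subset of $\omega$ has order type exactly $\omega$ (and dually, every infinite subset reversed has type $\omega^*$); this is what lets me upgrade ``embeds into $\omega$'' to ``is isomorphic to $\omega$'' using the infinitude of $U$. Beyond that the argument is routine, and the genuine content is conceptual: recognizing that the Sierpi\'nski colouring separates precisely the two extremal order types $\omega$ and $\omega^*$ from every other countable chain.
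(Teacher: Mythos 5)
The paper offers no proof of this theorem---it is quoted from Rosenstein \cite[Corollary 11.4]{rosenstein}---so there is nothing in-paper to compare against; your argument is the standard Sierpi\'nski-colouring proof and it is correct: monochromaticity in colour $0$ makes $f{\upharpoonright}U$ an order-isomorphism onto an infinite subset of $\omega$, which necessarily has type $\omega$, and dually for colour $1$, so any monochromatic copy forces $A\cong\omega$ or $A\cong\omega^*$. The only caveat is the one you already handled implicitly: as literally stated for an arbitrary ``countable chain'' the claim fails for the finite chains $1$ and $2$ (where $T(2,\cdot)=1$ trivially), so ``countable'' must be read as ``countably infinite,'' which is the intended reading and is exactly the hypothesis under which your contrapositive step (the image of $f{\upharpoonright}U$ being infinite) goes through.
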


It is very easy to show that if $A$ and $B$ are chains such that
$A$ embeds into $B$ and $B$ embeds into $A$ then $\spec(A) = \spec(B)$ for all $n \ge 1$.
Devlin's result, therefore, immediately applies to any non-scattered countable chain
(recall that a countable chain is \emph{scattered} if it does \emph{not} embed~$\QQ$,
otherwise it is \emph{non-scattered}):

\begin{THM}[Devlin \cite{devlin}]\label{fbrd-scat-all.thm.scat-devlin}
  If $A$ is a non-scattered countable chain then $\spec(A) = \spec(\QQ) = (\mathbb{T}_1, \mathbb{T}_2, \mathbb{T}_3, \ldots)$.
\end{THM}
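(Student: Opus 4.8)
The plan is to deduce the statement as a direct corollary of Devlin's result (Theorem~\ref{fbrd-scat-all.thm.scat-devlin} with the appropriate labelling, i.e.\ Devlin's computation of $\spec(\QQ)$) together with the mutual-embeddability principle announced just before the statement: if $A$ and $B$ are chains such that $A$ embeds into $B$ and $B$ embeds into $A$, then $\spec(A) = \spec(B)$. Concretely, I would fix a non-scattered countable chain $A$ and show that $A$ and $\QQ$ are mutually embeddable, whence $\spec(A) = \spec(\QQ)$ and the claimed value follows from Devlin's theorem.

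First I would record the mutual-embeddability principle itself, since the excerpt asserts it as ``very easy'' but does not prove it. The clean way is to argue at the level of colorings: suppose $e\colon A \hookrightarrow B$ and $f\colon B \hookrightarrow A$ are embeddings of chains. Given a coloring $\chi\colon \binom{A}{n} \to k$, transport it along $f$ to a coloring $\chi'$ of $\binom{B}{n}$ by setting $\chi'(X) = \chi(f(X))$; apply the hypothesis that $n$ has big Ramsey degree $T(n,B)$ in $B$ to obtain a copy $V \subseteq B$ of $B$ on which $\chi'$ takes at most $T(n,B)$ colors; then $f(V) \subseteq A$ contains a copy of $A$ (because $A$ embeds in $B \cong V$, and $f$ is an embedding), and $\chi$ restricted to that copy takes at most $T(n,B)$ colors. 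This yields $T(n,A) \le T(n,B)$, and by symmetry $T(n,A) = T(n,B)$ for every $n$, hence $\spec(A) = \spec(B)$.

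Next I would establish that every non-scattered countable chain $A$ is mutually embeddable with $\QQ$. One direction, $\QQ \hookrightarrow A$, is exactly the definition of non-scattered. The reverse direction, $A \hookrightarrow \QQ$, is Cantor's classical theorem that $\QQ$ is the universal countable chain: every countable linear order embeds into $(\QQ, <)$, proved by a straightforward back-and-forth (or simply forth) enumeration argument. Combining the two embeddings with the principle of the previous paragraph gives $\spec(A) = \spec(\QQ)$, and Devlin's theorem identifies this common spectrum as $(\mathbb{T}_1, \mathbb{T}_2, \mathbb{T}_3, \ldots)$.

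I do not anticipate a genuine obstacle here, as the result is a corollary; the only point requiring a little care is the transport-of-colorings argument, specifically verifying that a copy of $A$ really does survive inside $f(V)$. The subtlety is that $V$ is merely order-isomorphic to $B$, so one must observe that since $A$ embeds into $B$ and $B \cong V$, one obtains an embedding $A \hookrightarrow V$, whose image under $f$ (an embedding) is a copy of $A$ sitting inside $A$ itself; the restriction of $\chi$ to the $n$-subsets of this copy factors through $\chi'$ on $V$ and hence uses at most $T(n,B)$ colors. This bookkeeping, together with the symmetric estimate, is all that is needed, and the degenerate case $T(n,B) = \infty$ is handled by the same inequality read in the extended sense.
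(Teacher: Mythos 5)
Your proposal is correct and is exactly the argument the paper intends: the theorem is derived as an immediate corollary of Devlin's computation of $\spec(\QQ)$ via the observation that $\spec$ is invariant under mutual embeddability, combined with $\QQ \hookrightarrow A$ (definition of non-scattered) and $A \hookrightarrow \QQ$ (Cantor's universality of $\QQ$ among countable chains). The only difference is that you spell out the transport-of-colorings verification that the paper dismisses as ``very easy,'' and your bookkeeping there is sound.
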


Not much is known about big Ramsey spectra of scattered chains.
One of the most notable results in this direction was proved by R.~Laver:

\begin{THM}[Laver \cite{laver-decomposition}]\label{fbrd-scat-all.thm.LAVER}
  $T(1, S) < \infty$ for every scattered chain~$S$.
\end{THM}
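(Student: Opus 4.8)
The plan is to argue by transfinite induction on the \emph{finite-condensation rank} of $S$, exploiting the classical structure theory of scattered chains. Recall that the finite condensation of a chain identifies two points whenever the interval between them is finite; its classes are the maximal convex pieces in which any two points lie finitely far apart, and each such class is necessarily isomorphic to one of the \emph{basic} chains: a finite chain, $\omega$, $\omega^*$, or $\ZZ$. Writing $S'$ for the chain of condensation classes, one obtains a canonical decomposition $S = \sum_{x \in S'} D_x$ in which every block $D_x$ is basic and $S'$ is again scattered but of strictly smaller rank; for scattered $S$ the tower of iterated condensations terminates at a one-point chain (Hausdorff's theorem). The base case is a single basic block: if $S$ is finite its only self-copy is $S$ itself, so $T(1,S)=|S|$; if $S\cong\omega$ or $S\cong\omega^*$ a pigeonhole gives a monochromatic copy, so $T(1,S)=1$; and $T(1,\ZZ)=2$, since colouring $\ZZ$ by the sign forces every bi-infinite copy to use both colours while two always suffice.

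For the inductive step I would fix a colouring $\chi$ of $S$ and push it down to a colouring of the lower-rank chain $S'$. The key point is that a basic block carries only a bounded amount of colour information: in a block isomorphic to $\omega$ (resp.\ $\omega^*$) some colour class is cofinal (resp.\ coinitial) and hence contains a monochromatic copy of the block, while $\ZZ$ needs at most two colours. Recording for each $x$ this \emph{dominant behaviour} yields a colouring $\chi'$ of $S'$ using only finitely many values, to which the induction hypothesis applies: there is a copy of $S'$ on which $\chi'$ takes at most $T(1,S')$ values. Re-inflating each selected class to a monochromatic (for $\ZZ$, two-coloured) copy of the corresponding basic block then produces a subchain of $S$ using a controlled number of $\chi$-colours.

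The crux, and the step I expect to be the real obstacle, is to guarantee that this re-inflated subchain is genuinely a \emph{copy} of $S$ rather than merely some scattered chain with few colours. This forces two refinements. First, the condensation classes come in different isomorphism types, so the downward colouring must be combined with a \emph{labelling} of $S'$ by block type, and the induction must be run for labelled scattered chains, seeking label-preserving copies; this is a standard but essential strengthening. Second, and more seriously, the \emph{finite} blocks resist monochromatisation: one cannot recolour a finite class freely, and it is precisely finite blocks sitting at the top of the condensation tower that force colours to appear, as the example $T(1,\omega\cdot n)=n$ shows, where the top quotient is an $n$-element chain. The resolution is a \emph{trading} argument: a finite class of any prescribed size may be realised monochromatically inside a sufficiently large finite block, so that the unbounded sizes of the lower finite blocks become harmless and only the rigid structure near the top of the tower contributes to the degree.

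Making this trading precise while simultaneously reproducing the exact order type of $S$ is where one must invoke the fine structure theory of scattered orders (Laver's results on their behaviour under embeddability); the naive pigeonhole of the preceding paragraph does not by itself certify that a copy of $S$ meeting the desired colour bound can be assembled. Once the labelled induction together with the trading argument is in place, the resulting bound on the number of colours is finite for each fixed $S$ — though, as the examples indicate, it depends on $S$ and not merely on its rank — which is exactly the assertion $T(1,S)<\infty$.
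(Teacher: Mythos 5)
First, a point of comparison: the paper does not prove this statement at all --- it is quoted as a theorem of Laver from \cite{laver-decomposition} --- so there is no in-paper argument to measure yours against, and your sketch must stand on its own. As written it does not. The first concrete failure is the induction itself: you induct on the finite-condensation rank and assert that the condensed chain $S'=\fin_S(S)$ has \emph{strictly smaller} rank. That is true only for finite nonzero rank. In general $(S')^{[\beta]}=S^{[1+\beta]}$, so $r_F(S')$ is the unique $\beta$ with $1+\beta=r_F(S)$, and for any infinite $\alpha$ one has $1+\alpha=\alpha$; hence $r_F(S')=r_F(S)$ whenever the rank is infinite. Your scheme therefore never gets past a single condensation for chains of infinite rank, has no mechanism at limit stages, and the theorem is asserted for \emph{all} scattered chains (the present paper in fact needs it precisely for countable chains of infinite rank, where it proves $T(n,S)=\infty$ for $n\ge 2$ but still uses $T(1,S)<\infty$).

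The second failure is the step you yourself flag as ``the crux'': reassembling a genuine copy of $S$ from low-colour pieces. Your resolution is to invoke ``the fine structure theory of scattered orders (Laver's results on their behaviour under embeddability)'' --- but that is the theorem you are proving, so the argument is circular at exactly the point where it becomes hard. Concretely, the labelled induction you propose requires infinitely many labels (one per finite block size), so no literal label-preserving Ramsey statement is available; the ``trading'' of a finite block into a larger one requires knowing that suitable larger blocks recur in positions compatible with some self-embedding of $S$; and self-embeddings of $S$ need not respect the condensation classes at all (a single $\omega$-class of the copy may be scattered over infinitely many classes of the ambient chain), so a copy of $S'$ together with monochromatic sub-blocks does not automatically assemble into a copy of $S$. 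Laver's actual proof does not proceed by iterated finite condensation: it rests on the decomposition of scattered types into finite sums of additively indecomposable types (Theorems~\ref{fbrd-scat-spec.thm.laver-1} and~\ref{fbrd-scat-spec.thm.laver-2} in this paper) and ultimately on his better-quasi-ordering theorem for scattered chains, which is what legitimises the kind of trading you describe. Your base cases ($T(1,\omega)=1$, $T(1,\omega^*)=1$, $T(1,\ZZ)=2$, $T(1,n)=n$) and the bookkeeping for the infinite blocks are fine, but the heart of the argument is missing.
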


In this paper we build on Laver's result by proving that countable scattered chains of infinite Hausdorff rank
do not have finite big Ramsey spectra, and prove that countable scattered chains of finite Hausdorff rank
\emph{with bounded finite sums} (see below for the definition) have finite big Ramsey spectra.
At the moment we are unable to resolve the remaining case of countable scattered chains of finite Hausdorff rank
whose finite sums are not bounded.

The paper is organized as follows.
In Section~\ref{fbrd-scat-all.sec.prelim} we recall some standard notions and notation.
In Section~\ref{fbrd-scat-all.sec.monotonicity} we prove that big Ramsey spectra of countable chains
are non-decreasing (where, as usual, we take $n < \infty$ for all $n \in \NN$).
In Section~\ref{fbrd-scat-all.sec.inf-rank} we prove
that big Ramsey spectra of countable scattered chains of infinite Hausdorff rank 
take the form $(n, \infty, \infty, \infty, \ldots)$ for some $n \in \NN$ (Theorem~\ref{fbrd-scat-all.thm.infty}).
In Section~\ref{fbrd-scat-all.sec.fin-rank} we focus on countable scattered chains of finite Hausdorff rank.
We identify a class of chains with \emph{bounded finite sums} (to be defined below) and prove
such scattered chains have finite big Ramsey spectra
(Theorem~\ref{fbrd-scat-all.thm.finite-rank}). As a corollary we prove that if $S$ is a chain of finite
Hausdorff rank satisfying $T(1, S) = 1$ then $\spec(S)$ is finite.

In both cases we rely on a Ramsey-type result and an appropriate representation of countable scattered chains.
Whereas in Section~\ref{fbrd-scat-all.sec.inf-rank} we use Galvin's result about square bracket partition relation
and work bottom-up using a ``semantic representation'' of scattered chains based on condensations,
in Section~\ref{fbrd-scat-all.sec.fin-rank} we use Laver's analysis of scattered chains from~\cite{laver-decomposition}
and an infinite version of the Product Ramsey Theorem,
and work top-down using a syntactic representation of scattered chains based on trees.
We close the paper with an open problem.

\section{Preliminaries}
\label{fbrd-scat-all.sec.prelim}

In a partially ordered set $(A, \Boxed\le)$ let $[a, b]_A = \{ x \in A : a \le x \le b \}$.
An \emph{interval of $A$} is a subset $I \subseteq A$ such that $[x, y]_A \subseteq I$ for all $x, y \in I$.
If we wish to stress that $A$ and $B$ are isomorphic as ordered sets we shall say that they are
\emph{order-isomorphic} and write $A \cong B$.

A \emph{chain (or a strict linear order)} is a pair $(A, \Boxed<)$ where $\le$ is a partial order on~$A$ with
no incomparable elements.
For a well-ordered set $(A, \Boxed<)$ let $\tp(A, \Boxed<)$ denote the \emph{order type} of $(A, \Boxed<)$,
that is, the unique ordinal $\alpha$ which is order-isomorphic to $(A, \Boxed<)$.
As usual, $\NN = \{1, 2, 3, \ldots \}$ is the chain of all the positive integers with the usual ordering,
$\ZZ = \{\ldots, -2, -1, 0, 1, 2, \ldots\}$ is the chain of all the integers with the usual ordering,
and $\QQ$ is the chain of all the rationals with the usual ordering. The order type of $\ZZ$ will be denoted by~$\zeta$.
Every integer $n \in \NN$ can be thought as a finite chain $0 < 1 < \ldots < n-1$.

Let $(A, \Boxed<)$ be a chain and assume that for each $a \in A$ we have a chain $(B_a, \Boxed{<_a})$.
Then the \emph{(indexed) sum of chains} $\sum_{a \in A} B_a$ is the chain on
$\bigcup_{a \in A} (\{a\} \times B_a)$ where the linear order $\prec$ is defined \emph{lexicographically}:
$(a, b) \prec (a', b')$ iff $a < a'$, or $a = a'$ and $b \mathrel{<_a} b'$. The \emph{product} of chains
$A$ and $B$ is the chain $\sum_{a \in A} B$. Instead of $\sum_{i \in n} B_i$ we shall write $B_0 + B_1 + \ldots + B_{n-1}$.

For the sake of simplicity we use the same notation for the operations on chains and for the
corresponding operations on ordinals. Moreover, in some
proofs we shall move freely between $\sum_{\xi \in \alpha} \beta_\xi$ and $\left(\UNION_{\xi \in \alpha} \{\xi\} \times \beta_\xi , \Boxed{\prec}\right)$,
and analogously for products. We believe that the context will always be sufficient to enable the correct parsing of the symbols.

The class of chains can be preordered by the embeddability relation in a usual way: for chains $A$ and $B$ we write
$A \hookrightarrow B$ to denote that there is an embedding from $A$ to $B$, and we write $A \equiv B$ if
$A \hookrightarrow B$ and $B \hookrightarrow A$.

A chain $A$ is \emph{scattered} if $\QQ \not\hookrightarrow A$; otherwise it is \emph{non-scattered}.
In 1908 Hausdorff published a structural characterization of scattered chains~\cite{hausdorff-scat}, which was
rediscovered by Erd\H os and Hajnal in their~1962 paper~\cite{erdos-hajnal-scat}.
We shall now present Hausdorff's characterization of countable scattered chains. Define a sequence $\calH_\alpha$ of chains
indexed by ordinals as follows:
\begin{itemize}
\item
  $\calH_0 = \{0, 1\}$;
\item
  for an ordinal $\alpha > 0$ let
  $
    \calH_\alpha = \{\sum_{i \in \ZZ} S_i : S_i \in \UNION_{\beta < \alpha} \calH_\beta \text{ for all } i \in \ZZ \}
  $.
\end{itemize}

\begin{THM}[Hausdorff~\cite{hausdorff-scat}]
  For each ordinal $\alpha$ the elements of $\calH_\alpha$ are countable scattered chains. Conversely,
  for every countable scattered chain $S$ there is an ordinal $\alpha$ such that $S \in \calH_\alpha$.
\end{THM}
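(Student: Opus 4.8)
The statement splits into a \emph{soundness} direction (every element of $\calH_\alpha$ is a countable scattered chain) and a \emph{completeness} direction (every countable scattered chain lies in some $\calH_\alpha$), and I would treat them separately. For soundness I proceed by transfinite induction on $\alpha$. The base case $\alpha=0$ is immediate, since $0$ and $1$ are finite and hence countable and scattered. For the inductive step an element of $\calH_\alpha$ has the form $S=\sum_{i\in\ZZ}S_i$ with each $S_i\in\UNION_{\beta<\alpha}\calH_\beta$, so by the induction hypothesis each $S_i$ is countable and scattered; countability of $S$ is then clear, being a countable union of countable chains. The real content is a lemma I would isolate and prove once: if $I$ is a scattered chain and $(S_i)_{i\in I}$ is a family of scattered chains, then $\sum_{i\in I}S_i$ is scattered. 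Granting it, scatteredness of $S$ follows because $\ZZ$ is scattered (it is discrete, so $\QQ\not\hookrightarrow\ZZ$).

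To prove the lemma, suppose toward a contradiction that $\QQ\hookrightarrow\sum_{i\in I}S_i$ and let $Q$ be the image, a copy of $\QQ$. The first-coordinate map $\pi\colon Q\to I$ is order-preserving, so each fibre $\pi^{-1}(i)$ is a convex subset of $Q$, i.e.\ an interval of a copy of $\QQ$. A non-degenerate interval of $\QQ$ itself contains a copy of $\QQ$, and on a fibre the second-coordinate map embeds that copy into $S_i$; so if some fibre had two distinct points we would get $\QQ\hookrightarrow S_i$, contradicting scatteredness of $S_i$. Hence every fibre is a singleton, $\pi$ is injective, and therefore $\QQ\cong Q\hookrightarrow I$, contradicting scatteredness of $I$.

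For completeness I would use iterated finite condensation. On a chain $C$ define an increasing family of convex equivalence relations: let $\sim_0$ be equality, let $x\sim_{\gamma+1}y$ hold iff only finitely many $\sim_\gamma$-classes lie between the classes of $x$ and of $y$, and let $\sim_\lambda=\UNION_{\gamma<\lambda}\sim_\gamma$ at limits. Since $C$ is countable these relations stabilize at some countable ordinal $\gamma^*$. The key observation is that if $\sim_{\gamma^*}$ were not the full relation, then $C/\sim_{\gamma^*}$ would equal its own finite condensation, hence be densely ordered with at least two points; a countable dense chain contains a copy of $\QQ$, which lifts through representatives to $\QQ\hookrightarrow C$, contradicting scatteredness. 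Thus for scattered $C$ the relation $\sim_{\gamma^*}$ is full, and I define the Hausdorff rank $\rho(C)$ to be the least $\alpha$ for which $C$ is a single $\sim_\alpha$-class. A routine point I would record first is that the relations $\sim_\delta$ are computed intrinsically on convex subsets, so each $\sim_\gamma$-class, and more generally each convex subchain of a chain of rank $\le\gamma$, again has rank $\le\gamma$.

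I would then prove by transfinite induction on $\alpha$ that $\rho(C)\le\alpha$ implies $C\in\calH_\alpha$. At a successor $\alpha=\gamma+1$, being a single $\sim_{\gamma+1}$-class means $C/\sim_\gamma$ is a single finite-condensation class, which embeds into $\ZZ$ by assigning integer coordinates; writing $C=\sum_{e\in C/\sim_\gamma}C_e$ with each class $C_e$ of rank $\le\gamma$, the induction hypothesis gives $C_e\in\calH_\gamma$, and reindexing over $\ZZ$ (filling gaps with the empty chain $0\in\calH_0$) exhibits $C\in\calH_{\gamma+1}$. The main obstacle is the limit case, because a single finite condensation does \emph{not} lower the rank once the rank is infinite, so the successor argument does not transfer. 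Here I would instead fix a point $a\in C$ and a sequence $\gamma_n\nearrow\lambda$ (possible since $\lambda$ is a countable limit, hence of cofinality $\omega$), and peel $C$ into the increasing convex shells given by the $\sim_{\gamma_n}$-classes of $a$. Splitting each successive difference into its parts below and above the previous shell arranges $C$ as a $\ZZ$-indexed sum of convex pieces, each of rank $<\lambda$; by the induction hypothesis each such piece lies in $\UNION_{\beta<\lambda}\calH_\beta$, whence $C\in\calH_\lambda$. This onion decomposition is the technical heart of the argument and the step I expect to require the most care.
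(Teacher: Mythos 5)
Your argument is correct, but note that the paper does not prove this statement at all: it is quoted as a classical result of Hausdorff (1908) with a citation, so there is no proof in the paper to compare against. What you have written is essentially the standard textbook proof (as in Rosenstein's \emph{Linear Orderings}): the soundness direction via the lemma that a sum of scattered chains over a scattered index chain is scattered, and the completeness direction via iterated finite condensations, which is exactly the machinery ($\Fin_A^\alpha$, the rank $r_F$, and the fact $r_F(S)=r_H(S)$) that the paper itself recalls and uses in Section~4. All the steps check out: the fibre argument for the sum lemma is sound; stabilization of the condensation relations at a countable ordinal follows from countability of $C\times C$; the observation that a non-trivial fixed point of the finite condensation is a countable dense order and hence contains a copy of $\QQ$ correctly handles the ``full relation'' claim; and your onion decomposition at limit stages, peeling $C$ into the $\sim_{\gamma_n}$-classes of a fixed point and splitting each successive shell into its lower and upper parts, is the right way to get a genuine $\ZZ$-indexed sum of pieces of strictly smaller rank (using, as you note, that the empty chain lies in $\calH_0$ and that convex subchains do not increase the rank). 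The only implicit fact worth recording explicitly is the monotonicity $\calH_\beta\subseteq\calH_\alpha$ for $0<\beta\le\alpha$, which you use when filling gaps with empty summands and when collecting pieces of varying ranks below $\lambda$; it follows immediately from the definition since any $S\in\calH_\beta$ is the $\ZZ$-sum of itself padded with empty chains.
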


The least ordinal $\alpha$ such that $\calH_\alpha$ contains a countable scattered chain $S$ is referred to as
the \emph{Hausdorff rank of $S$} and denoted by $r_H(S)$.
A countable scattered chain $S$ has \emph{finite Hausdorff rank} if $r_H(S) < \omega$;
otherwise it has \emph{infinite Hausdorff rank}.

Let $C$ be a chain and $n$ a finite chain. Then the set $\binom Cn$ of all the $n$-element subsets of $C$
clearly corresponds to the set $\Emb(n, C)$ of all the embeddings $n \hookrightarrow C$. We sometimes find it more convenient
to formally introduce big Ramsey degrees as follows. For chains $A$, $B$, $C$ and integers $k \ge 2$ and $t \ge 1$
we write $C \longrightarrow (B)^{A}_{k, t}$ to denote that for every $k$-coloring $\chi : \Emb(A, C) \to k$
there is an embedding $w \in \Emb(B, C)$ such that $|\chi(w \circ \Emb(A, B))| \le t$.
For a chain $C$ and a finite chain $n$ we say that $n$ has \emph{finite big Ramsey degree in $C$}
if there exists a positive integer $t$ such that for each $k \ge 2$ we have that
$C \longrightarrow (C)^{n}_{k, t}$.
Equivalently, a finite chain $n$ has finite big Ramsey degree in a chain $C$
if there exists a positive integer $t$ such that for every $k \ge 2$ and every
$k$-coloring $\chi : \Emb(n, C) \to k$ there is a $U \subseteq C$ order-isomorphic to $C$
such that $|\chi(\Emb(n, U))| \le t$.
The least such $t$ is then denoted by $T(n, C)$. If such a $t$ does not exist
we say that $A$ \emph{does not have finite big Ramsey degree in $C$} and write
$T(A, C) = \infty$. The sequence
$$
  \spec(C) = (T(1, C), T(2, C), T(3, C), \ldots) \in (\NN \union \{\infty\})^\NN
$$
is referred to as the \emph{big Ramsey spectrum of $C$}.
We say that $C$ \emph{has finite big Ramsey spectrum}
if $\spec(C) \in \NN^\NN$. For the sake of convenience, for any chain $C$ we let $T(0, C) = 1$ by definition.
It is easy to see that $C_1 \equiv C_2$ implies $\spec(C_1) = \spec(C_2)$.

\section{Monotonicity}
\label{fbrd-scat-all.sec.monotonicity}

Big Ramsey degrees in Fra\"\i ss\'e limits are monotonous in the following sense (see \cite{zucker-bigrd}).
Let $F$ be a countable \Fraisse\ limit in a relational language and let $A$ and $B$ be finite substructures of $F$.
Then $A \hookrightarrow B$ implies that $T(A, F) \le T(B, F)$. In this section we prove that the same holds for countable
chains. Consequently, the big Ramsey spectrum of any countable chain is a nondecreasing sequence of elements of
$\NN \union \{\infty\}$ where, of course, we take $\infty$ to be larger than any integer. Since in the context of arbitrary
chains we cannot rely on ultrahomogeneity, we shall proceed by discussing the structure of countable chains.

\begin{LEM}\label{fbrd-scat-all.lem.2-inf-n-inf}
  Let $A$ be an infinite chain and $m, n \in \NN$ such that $2 \le m \le n$. If $T(n, A) < \infty$ then $T(m, A) < \infty$. 
\end{LEM}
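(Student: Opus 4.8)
The plan is to prove finiteness by reducing an arbitrary coloring of the $m$-element subsets to a coloring of the $n$-element subsets, for which we already control the number of colors by hypothesis. The mechanism is the standard ``record the whole pattern'' trick: to an $n$-subset I attach, as a single composite color, the tuple of $\chi$-values of all of its $m$-subsets arranged by their relative position. Since an $n$-set has only finitely many $m$-subsets, this keeps the number of composite colors finite, so the hypothesis $T(n,A)<\infty$ applies. The whole argument works uniformly in the number $k$ of colors, which is exactly what is needed to conclude finiteness of $T(m,A)$.

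Concretely, I would carry out the following steps. Fix $k \ge 2$ and a coloring $\chi : \binom Am \to k$. For $X \in \binom An$ define $\chi'(X) = \bigl(\chi(Y)\bigr)_{g}$, where $g$ ranges over the $m$-element subsets of the finite chain $n$ and $Y \subseteq X$ is the corresponding $m$-subset; thus $\chi'$ is a coloring of $\binom An$ with $k' = k^{\left|\binom nm\right|} \ge 2$ colors. Because $T(n,A) = t < \infty$, we have $A \longrightarrow (A)^n_{k', t}$, so applying this to $\chi'$ yields $U \subseteq A$ order-isomorphic to $A$ with $\bigl|\chi'(\binom Un)\bigr| \le t$. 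Now take any $Y \in \binom Um$. Since $U \cong A$ is infinite and $m \le n$, I can enlarge $Y$ to some $X \in \binom Un$ by adjoining $n-m$ further elements of $U$; then $\chi(Y)$ is precisely the coordinate of $\chi'(X)$ indexed by the position of $Y$ inside $X$, hence a coordinate of one of the at most $t$ tuples occurring in $\chi'(\binom Un)$. Each such tuple has $\left|\binom nm\right|$ coordinates, so at most $t \cdot \left|\binom nm\right|$ colors arise this way, giving $\bigl|\chi(\binom Um)\bigr| \le T(n,A)\cdot\left|\binom nm\right|$. As this bound is independent of $k$ and of $\chi$, it follows that $T(m,A) \le T(n,A)\cdot\left|\binom nm\right| < \infty$.

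I do not expect a serious obstacle here, as this is essentially a counting reduction; the single point that requires care is the extension step, where every $m$-subset of $U$ must genuinely embed into an $n$-subset of $U$. This is exactly where infiniteness is used, and it is guaranteed because $U \cong A$ with $A$ infinite. The explicit bound is a byproduct and is not needed for the statement, so I would keep it as a remark rather than optimize it. Finally, I note that the argument does not really use $m \ge 2$ (it goes through for $m \ge 1$); the hypothesis $2 \le m$ in the statement is immaterial to this particular reduction. If one prefers the embedding formulation, the same proof is obtained by setting $\chi'(f) = \bigl(\chi(f \circ g)\bigr)_{g \in \Emb(m,n)}$ for $f \in \Emb(n,A)$ and, for $h \in \Emb(m,A)$, factoring $w \circ h = (w \circ f)\circ g$ through a suitable extension $f \in \Emb(n,A)$.
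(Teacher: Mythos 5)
Your proof is correct and follows essentially the same route as the paper's: both reduce a coloring of $m$-subsets to a composite coloring of $n$-subsets (the paper records the \emph{set} $\chi(f\circ\Emb(m,n))\subseteq k$ where you record the \emph{tuple} of values, an immaterial difference), both use infiniteness of $A'\cong A$ to extend every $m$-subset to an $n$-subset, and both arrive at the bound $T(m,A)\le T(n,A)\cdot\binom nm$.
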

\begin{proof}
  Let $T(n, A) = t \in \NN$ and let us show that $T(m, A) \le t \cdot \binom nm$.
  Take any $k \ge 2$ and any $\chi : \Emb(m, A) \to k$. Define
  $$
    \chi' : \Emb(n, A) \to \calP(k)
  $$
  by
  $$
    \chi'(f) = \chi(f \circ \Emb(m, n)) \subseteq k.
  $$
  Since $T(n, A) = t$, there is an $A' \subseteq A$ order-isomorphic to $A$ such that
  $$
    |\chi'(\Emb(n, A'))| \le t.
  $$
  Therefore,
  \begin{align*}
    \chi'(\Emb(n, A')) &= \{ \chi'(f) : f \in \Emb(n, A') \}\\
                       &= \{ \chi(f \circ \Emb(m, n)) : f \in \Emb(n, A') \}
  \end{align*}
  has at most $t$ elements, so there exist not necessarily distinct $g_0, g_1, \ldots, g_{t-1} \in \Emb(n, A')$
  such that
  \begin{equation}\label{fbrd-scat-all.eq.2-n}
    \{ \chi(f \circ \Emb(m, n)) : f \in \Emb(n, A') \} = \{ \chi(g_i \circ \Emb(m, n)) : i < t \}.
  \end{equation}
  On the other hand, it is easy to see that $\Emb(m, A') = \Emb(n, A') \circ \Emb(m, n)$
  because $A'$, as a chain isomorphic to~$A$, is infinite. So,
  \begin{align*}
    \chi(\Emb(m, A'))
    &=\textstyle \chi(\Emb(n, A') \circ \Emb(m, n))\\
    &=\textstyle \chi(\UNION_{f \in \Emb(n, A')} f \circ \Emb(m, n))\\
    &=\textstyle \UNION_{f \in \Emb(n, A')} \chi(f \circ \Emb(m, n))\\
    &=\textstyle \UNION_{i < t} \chi(g_i \circ \Emb(m, n)) && \text{because of \eqref{fbrd-scat-all.eq.2-n}.}
  \end{align*}
  Therefore,
  $$
    |\chi(\Emb(m, A'))| \le \sum_{i < t} |\chi(g_i \circ \Emb(m, n))|
    \le t \cdot \binom nm,
  $$
  because $|\chi(g_i \circ \Emb(m, n))| \le \binom nm$.
\end{proof}

\begin{LEM}\label{fbrd-scat-all.lem.no-max}
  Let $A$ be a chain with no maximal element.
  Then $m \le n$ implies $T(m, A) \le T(n, A)$ for all $m, n \in \NN$.
\end{LEM}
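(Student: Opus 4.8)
The plan is to reduce the statement about general $m \le n$ to the case of consecutive integers, and then to handle the step $T(m, A) \le T(m+1, A)$ directly by building, from any coloring of $m$-element subsets, a coloring of $(m+1)$-element subsets whose monochromatic-type behavior controls the original. The key structural hypothesis I would exploit is that $A$ has no maximal element: this guarantees that every embedding $f \in \Emb(m, A)$ can be extended to an embedding of $m+1$ by appending a point strictly above $\im(f)$, which is exactly what lets us encode an $m$-coloring as the restriction of an $(m+1)$-coloring.

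First I would fix $t = T(m+1, A)$ (assuming it is finite, the inequality being trivial otherwise) and take an arbitrary $k$-coloring $\chi : \Emb(m, A) \to k$. The idea is to define $\chi^{+} : \Emb(m+1, A) \to k$ by $\chi^{+}(g) = \chi(g \restr{}{m})$, where $g \restr{}{m}$ denotes the restriction of $g$ to the first $m$ coordinates (equivalently, $g \circ \iota$ for the canonical inclusion $\iota : m \hookrightarrow m+1$ onto the initial segment). Applying $T(m+1, A) = t$ to $\chi^{+}$ produces a copy $A' \subseteq A$, order-isomorphic to $A$, on which $\chi^{+}$ takes at most $t$ values on $\Emb(m+1, A')$.

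Next I would argue that on this same $A'$ the original coloring $\chi$ already takes at most $t$ values. The crucial point is surjectivity: every $f \in \Emb(m, A')$ arises as $g \restr{}{m}$ for some $g \in \Emb(m+1, A')$. Since $A' \cong A$ has no maximal element, $\im(f)$ has an element above it in $A'$, so we may extend $f$ to an embedding $g$ of $m+1$ into $A'$ whose first $m$ points are exactly $\im(f)$; then $\chi(f) = \chi^{+}(g) \in \chi^{+}(\Emb(m+1, A'))$. Hence $\chi(\Emb(m, A')) \subseteq \chi^{+}(\Emb(m+1, A'))$, which has at most $t$ elements, giving $T(m, A) \le t = T(m+1, A)$.

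Finally I would chain these consecutive inequalities: for arbitrary $m \le n$ one writes $T(m, A) \le T(m+1, A) \le \cdots \le T(n, A)$. I do not anticipate a genuine obstacle here; the only point requiring care is the surjectivity step, where the no-maximal-element hypothesis is used in the transferred copy $A'$ rather than in $A$ itself. Since $A' \cong A$, having no maximal element is an order-isomorphism invariant and the extension is available inside $A'$, so the argument closes. One should also confirm the degenerate bookkeeping at the low end, but the hypotheses $m, n \in \NN$ keep us away from the trivial $T(0, A) = 1$ convention.
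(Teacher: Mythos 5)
Your proof is correct and uses essentially the same argument as the paper: pull back the coloring along restriction to an initial segment, apply the big Ramsey degree for the larger chain, and use the absence of a maximal element in the copy $A'$ to see that every embedding of the smaller chain extends, giving the containment of color sets. The only cosmetic difference is that you chain through consecutive integers $m, m+1, \ldots, n$, whereas the paper performs the identical step directly from $m$ to $n$ in one pass.
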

\begin{proof}
  Let $T(n, A) = t \in \NN$.
  Take any $k \ge 2$ and let $\chi : \Emb(m, A) \to k$ be a coloring.
  Define $\chi' : \Emb(n, A) \to k$ by $\chi'(h) = \chi(\restr hm)$.
  Then there is an $A' \subseteq A$ order-isomorphic to $A$ such that
  $|\chi'(\Emb(n, A'))| \le t$. Since $A'$ is a chain with no maximal element, every
  $m$-element subchain of $A'$ can be extended to an $n$-element subchain of $A'$, whence
  $\chi(\Emb(m, A')) \subseteq \chi'(\Emb(n, A'))$. Therefore,
  $|\chi(\Emb(m, A'))| \le t$.
\end{proof}

\begin{LEM}\label{fbrd-scat-all.lem.B+omega*}
  Let $A$ be a chain such that $A = B + \omega^*$ for some chain $B$.
  Then $m \le n$ implies $T(m, A) \le T(n, A)$ for all $m, n \in \NN$.
\end{LEM}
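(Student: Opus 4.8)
The plan is to imitate the proof of Lemma~\ref{fbrd-scat-all.lem.no-max} and patch the one place where the absence of a maximal element was used. As there, I may assume $m < n$ and $T(n,A) = t \in \NN$, the cases $m = n$ and $T(n,A) = \infty$ being trivial. Given $k \ge 2$ and a coloring $\chi : \Emb(m, A) \to k$, I would set $\chi'(h) = \chi(\restr h m)$ for $h \in \Emb(n, A)$ and invoke $T(n, A) = t$ to obtain a copy $A' \subseteq A$, $A' \cong A$, with $|\chi'(\Emb(n, A'))| \le t$. The goal is then to produce a single copy $U \cong A$ on which $\chi(\Emb(m, U)) \subseteq \chi'(\Emb(n, A'))$, which would force $|\chi(\Emb(m, U))| \le t$.

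The obstruction, and the reason Lemma~\ref{fbrd-scat-all.lem.no-max} does not apply verbatim, is that $A'$ has a maximum: an $m$-subset $S$ whose largest element lies among the top $n-m$ elements of $A'$ cannot be enlarged to an $n$-subset of $A'$ by adding $n-m$ elements above it, so for such $S$ one cannot conclude $\chi(S) \in \chi'(\Emb(n, A'))$. The hypothesis $A = B + \omega^*$ is exactly what lets me sidestep this finite top obstruction, since the trailing $\omega^*$ is self-similar under deleting finitely many of its top elements.

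The key step is to use the $\omega^*$-tail simultaneously as the place where the obstruction lives and as the tool that removes it. Fixing an isomorphism $A \cong A'$ and writing $A' = B' + D'$ for the image of the decomposition $A = B + \omega^*$ (so $B' \cong B$ and $D' \cong \omega^*$), let $p_0 > p_1 > \cdots > p_{n-m-1}$ be the top $n-m$ elements of $D'$, equivalently of $A'$, and set $U = A' \setminus \{p_0, \ldots, p_{n-m-1}\}$. Since deleting finitely many elements from the top of $\omega^*$ again yields $\omega^*$, I get $U = B' + (D' \setminus \{p_0,\ldots,p_{n-m-1}\}) \cong B + \omega^* = A$. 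Now every element of $U$ lies strictly below each $p_i$, so for any $m$-subset $S$ of $U$ the set $h = S \cup \{p_0, \ldots, p_{n-m-1}\}$ is an $n$-subset of $A'$ whose bottom $m$ elements are exactly $S$; hence $\restr h m = S$ and $\chi(S) = \chi'(h) \in \chi'(\Emb(n, A'))$. Therefore $\chi(\Emb(m, U)) \subseteq \chi'(\Emb(n, A'))$, so $|\chi(\Emb(m, U))| \le t$, and since $U \cong A$ and $\chi$, $k$ were arbitrary this yields $T(m, A) \le t = T(n, A)$.

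The only genuinely new ingredient beyond Lemma~\ref{fbrd-scat-all.lem.no-max} is this deletion-and-reuse of the top block; the main thing to get right is the bookkeeping that $U \cong A$ and that the discarded block sits above all of $U$, so that it functions as a universal upward extension for every $m$-subset of $U$ at once. I expect both points to be routine given the $B + \omega^*$ form, so I anticipate no serious difficulty.
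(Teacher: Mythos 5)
Your proof is correct and uses essentially the same idea as the paper: the paper pre-composes with a self-embedding $g$ of $A$ that fixes $B$ and shifts the $\omega^*$-tail up by $n$ positions (so that $g \circ \Emb(m,A) \subseteq \Emb(n,A) \circ f$), while you equivalently delete the top $n-m$ elements of the good copy $A'$ afterwards; in both cases the key point is that $\omega^*$ minus a finite top segment is again $\omega^*$, so the discarded elements serve as a universal upward extension of every $m$-subset.
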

\begin{proof}
  Without loss of generality we may assume that $B \sec \omega^* = \0$.
  Fix $m, n \in \NN$ such that $m \le n$. Let $f : m \hookrightarrow n$ be the inclusion
  $f(i) = i$, and let $g : A \hookrightarrow A$ be
  the self-embedding of $A$ where $g(b) = b$ for all $b \in B$ and $g(i) = i + n$
  for all $i \in \omega^*$. Because $g$ ``leaves enough room towards the end of the chain''
  it is easy to show that $g \circ \Emb(m, A) \subseteq \Emb(n, A) \circ f$.

  Let $T(n, A) = t \in \NN$.
  Take any $k \ge 2$ and let $\chi : \Emb(m, A) \to k$ be a coloring.
  Define $\chi' : \Emb(n, A) \to k$ by $\chi'(h) = \chi(h \circ f)$.
  Then there is a $w : A \hookrightarrow A$ such that $|\chi'(w \circ \Emb(n, A))| \le t$.
  The definition of $\chi'$ then yields that
  $|\chi(w \circ \Emb(n, A) \circ f)| \le t$. Therefore,
  $|\chi(w \circ g \circ \Emb(m, A))| \le t$ because $g \circ \Emb(m, A) \subseteq \Emb(n, A) \circ f$.
\end{proof}

\begin{LEM}\label{fbrd-scat-all.lem.B+r}
  Let $A$ be a chain such that $A = B + r$ for some $r \in \NN$ and some chain $B$ with no maximal element.
  Assume that $B \sec r = \0$ where $r = \{0, 1, \ldots, r-1\}$,
  and that there exists an embedding $\hat g : A \hookrightarrow A$ such that $\hat g(0) \notin r$.
  Then $m \le n$ implies $T(m, A) \le T(n, A)$ for all $m, n \in \NN$.
\end{LEM}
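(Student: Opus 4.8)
The plan is to extract from the single hypothesis ``$\hat g(0)\notin r$'' the much stronger structural fact that $A\equiv B$, i.e.\ that $A$ and $B$ are bi-embeddable; once this is in hand the result is immediate, because $A\equiv B$ gives $\spec(A)=\spec(B)$, and $B$ has no maximal element, so the monotonicity already proved for such chains (Lemma~\ref{fbrd-scat-all.lem.no-max}) applies to $B$ and transports back to $A$.

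First I would pin down where $\hat g$ sends the initial segment. Since $\hat g(0)\in A=B\cup r$ and $\hat g(0)\notin r$, we have $\hat g(0)\in B$. As $B$ is an initial segment of $A$ lying entirely below $0$, order-preservation of $\hat g$ yields $\hat g(b)<\hat g(0)$ for every $b\in B$, and because $\hat g(0)\in B$ and $B$ is downward closed this forces $\hat g(b)\in B$; hence $\hat g(B)\subseteq B$. Consequently the restriction $\phi$ of $\hat g$ to $B\cup\{0\}$ is an embedding of $B+1$ into $B$ whose image has maximum $\hat g(0)\in B$. This one absorbed point is exactly what the existence of $\hat g$ buys us: without it the step is unavailable — for instance when $B=\omega$ or $B=\zeta$ one has $B+1\not\hookrightarrow B$, so no such $\hat g$ exists at all.

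Next I would absorb the entire finite top block into $B$ by induction on $k$, proving $B+k\hookrightarrow B$ for $1\le k\le r$. The base case $k=1$ is $\phi$. For the inductive step I write $B+(k+1)=(B+k)+1$, apply the embedding $B+k\hookrightarrow B$ (whose image has a maximum $y\in B$, since $B+k$ has a maximum for $k\ge 1$), and then, using that $B$ has no maximal element, pick some $z\in B$ with $z>y$ to receive the one extra top point. Taking $k=r$ gives $A=B+r\hookrightarrow B$; since $B\hookrightarrow A$ trivially, we conclude $A\equiv B$ and therefore $\spec(A)=\spec(B)$.

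Finally, $B$ has no maximal element, so Lemma~\ref{fbrd-scat-all.lem.no-max} gives $T(m,B)\le T(n,B)$ whenever $m\le n$, and transporting this along $\spec(A)=\spec(B)$ yields $T(m,A)=T(m,B)\le T(n,B)=T(n,A)$, as required. The only substantive work is the reduction $A\equiv B$, and I expect the inductive absorption of the top block to be the point to get right, precisely because it uses both hypotheses in tandem: the embedding $\hat g$ supplies the first absorbed point, while the absence of a maximum in $B$ supplies room for the remaining $r-1$.
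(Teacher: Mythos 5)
Your proposal is correct. The core construction is the same as the paper's: from $\hat g(0)\in B$ you get $\hat g(B)\subseteq B$ (since $B$ is a downward-closed initial segment), and the absence of a maximal element in $B$ lets you absorb the remaining $r-1$ points of the top block into $B$ above $\hat g(0)$. The paper does exactly this in one step, defining $g:A\hookrightarrow A$ by $g(b)=\hat g(b)$ on $B$ and $g(i)=b_i$ for chosen $b_0=\hat g(0)<b_1<\ldots<b_{r-1}$ in $B$. Where you diverge is the endgame: the paper keeps $g$ as a self-embedding of $A$, verifies $g\circ\Emb(m,A)\subseteq\Emb(n,A)\circ f$ for the inclusion $f:m\hookrightarrow n$, and reruns the coloring-transfer argument of Lemma~\ref{fbrd-scat-all.lem.B+omega*}; you instead observe that the same map witnesses $A=B+r\hookrightarrow B$, hence $A\equiv B$, and then invoke the invariance of $\spec$ under $\equiv$ (asserted in the Preliminaries) together with Lemma~\ref{fbrd-scat-all.lem.no-max} applied to $B$. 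Your route is slightly slicker in that it avoids repeating the coloring argument and makes explicit that the hypothesis collapses this case to the no-maximal-element case; the paper's route is more self-contained, transferring colorings directly without passing through the bi-embeddability reduction. Both are valid, and your inductive absorption $B+k\hookrightarrow B$ is just an unrolled version of the paper's one-shot choice of $b_1<\ldots<b_{r-1}$.
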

\begin{proof}
  Let $\hat g(0) = b_0 \in B$. Since $B$ does not
  have the maximal element there exist $b_1, \ldots, b_{r-1} \in B$ such that
  $b_0 < b_1 < \ldots < b_{r-1}$. Let $g : A \hookrightarrow A$ be
  the self-embedding of $A$ where $g(b) = \hat g(b)$ for all $b \in B$ and $g(i) = b_i$ for all $i < r$.
  Then $g$ is clearly a self embedding of $A$. Let $f : m \hookrightarrow n$ be the inclusion
  $f(i) = i$. As in the proof of Lemma~\ref{fbrd-scat-all.lem.B+omega*} it is easy to show that
  $g \circ \Emb(m, A) \subseteq \Emb(n, A) \circ f$ because $g$ ``leaves enough room towards the end of the chain''.
  We can now simply repeat the argument of Lemma~\ref{fbrd-scat-all.lem.B+omega*} to conclude the proof.
\end{proof}

Let $f : n \hookrightarrow B + r$ be an embedding, where $n, r \in \NN$ and $B$ is a chain such that
$B \sec r = \0$. Then $\tp(f) = \im(f) \sec r$ will be referred to as the \emph{type of $f$}. (For a set map $f : A \to B$
by $\im(f)$ we denote the image of $f$, that is, the set $\{f(a) : a \in A\}$.)
Given a type $\tau \subseteq r$, let
$$
  \Emb_\tau(n, B + r) = \{ f \in \Emb(n, B + r) : \tp(f) = \tau \}.
$$

\begin{LEM}\label{fbrd-scat-all.lem.1-additive}
  Let $n, r \in \NN$ and let $B$ be a chain such that $B \sec r = \0$.
  For every type $\tau \subseteq r$ with $|\tau| \le n$, every $k \ge 2$ and every coloring
  $
    \chi : \Emb_\tau(n, B + r) \to k
  $
  there is a $U \subseteq B$ order-isomorphic to $B$ such that
  $
    |\chi(\Emb_\tau(n, U + r))| \le T(n - |\tau|, B)
  $.
\end{LEM}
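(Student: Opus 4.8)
The plan is to reduce the statement about $\Emb_\tau(n, B+r)$ to the big Ramsey degree of the \emph{smaller} chain $n - |\tau|$ computed in the tail $B$. The key observation is that an embedding $f \in \Emb_\tau(n, B+r)$ is completely determined by two pieces of data: which points of $r$ it hits (this is fixed, since $\tp(f) = \tau$), and how its remaining $n - |\tau|$ points land in $B$. Because $B \sec r = \0$ and the order on $B + r$ places all of $B$ below all of $r$, the points of $f$ that fall in $B$ must occupy precisely an initial segment of the domain $n$, namely the first $n - |\tau|$ coordinates, while the points hitting $\tau$ occupy a fixed set of the top coordinates. First I would make this bijection explicit: let $s = |\tau|$ and $m = n - s$; there is a natural bijection between $\Emb_\tau(n, B+r)$ and $\Emb(m, B)$ obtained by restricting $f$ to its $B$-part, since the $\tau$-part is rigidly determined.

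\textbf{Transporting the coloring.}
Using this bijection, I would push the coloring $\chi$ on $\Emb_\tau(n, B+r)$ down to a coloring $\chi^{\flat} : \Emb(m, B) \to k$, by setting $\chi^{\flat}(h) = \chi(\hat h)$ where $\hat h \in \Emb_\tau(n, B+r)$ is the unique extension of $h : m \hookrightarrow B$ that sends the top $s$ coordinates onto $\tau \subseteq r$ in the order-preserving way. By the definition of $T(m, B) = T(n - |\tau|, B)$, there is a $U \subseteq B$ order-isomorphic to $B$ with $|\chi^{\flat}(\Emb(m, U))| \le T(m, B)$. The final step is to verify that this same $U$ works for the original problem: because every $f \in \Emb_\tau(n, U + r)$ has its $B$-part landing in $U$ and its $\tau$-part fixed, the bijection restricts to a bijection between $\Emb_\tau(n, U + r)$ and $\Emb(m, U)$ that is compatible with the colorings, giving $\chi(\Emb_\tau(n, U+r)) = \chi^{\flat}(\Emb(m, U))$ and hence $|\chi(\Emb_\tau(n, U+r))| \le T(n - |\tau|, B)$, as required.

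\textbf{The main obstacle.}
The only real point requiring care is the claim that the $B$-part of any $f \in \Emb_\tau(n, B+r)$ occupies exactly the initial segment of the domain. This follows from the ordering on $B + r$: since $b < j$ for every $b \in B$ and every $j \in r$, an order-embedding $f$ must send all domain points whose image lies in $B$ below all domain points whose image lies in $\tau$, so the $\tau$-hitting coordinates form a \emph{final} segment of $n$ and the $B$-hitting coordinates form the complementary \emph{initial} segment of length $n - |\tau|$. I expect this to be the step deserving a sentence of justification rather than genuine difficulty; once it is in place, the bijection and the compatibility of colorings are routine. One should also note the degenerate case $|\tau| = n$, where $m = 0$, $\Emb(0, B)$ is a singleton, and the bound reads $T(0, B) = 1$ by the convention fixed in the preliminaries; here the claim is immediate since $\Emb_\tau(n, B+r)$ receives a single color.
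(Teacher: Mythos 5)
Your proposal is correct and follows essentially the same route as the paper: the paper's proof uses exactly the bijection $\Phi : \Emb_\tau(n, B+r) \to \Emb(n-s, B)$, $f \mapsto \restr{f}{n-s}$ (your ``restriction to the $B$-part''), transports the coloring along it, applies the definition of $T(n-s,B)$ to obtain $U$, and treats $|\tau| = n$ as the degenerate case with $T(0,B)=1$, just as you do. No gaps.
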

\begin{proof}
  Take an type $\tau \subseteq r$ such that $|\tau| \le n$ and assume that $T(n - |\tau|, B) < \infty$.
  If $|\tau| = n$ then for every $U \subseteq B$ we have that
  $|\Emb_\tau(n, U + r)| = 1$, whence $|\chi(\Emb_\tau(n, U + r))| = 1 = T(0, B)$.
  So, let $s = |\tau| < n$ and let
  $
    \Phi : \Emb_\tau(n, B + r) \to \Emb(n - s, B)
  $
  be the bijection that takes $f \in \Emb_\tau(n, B + r)$ to $\restr{f}{n-s} \in \Emb(n - s, B)$.
  Fix a $k \ge 2$ and a coloring
  $
    \chi : \Emb_\tau(n, B + r) \to k.
  $
  Let
  $
    \chi' : \Emb(n - s, B) \to k
  $
  be the coloring defined by $\chi'(f) = \chi(\Phi^{-1}(f))$. Then
  there is a $U \subseteq B$ order-isomorphic to $B$ such that
  $
    |\chi'(\Emb(n - s, U))| \le T(n - s, B)
  $.
  But then it easily follows that $|\chi(\Emb_\tau(n, U + r))| \le T(n - s, B)$.
\end{proof}

\begin{LEM}\label{fbrd-scat-all.lem.A+m}
  Let $B$ be a chain with no maximal element and let $r \in \NN$.
  Assume that $B \sec r = \0$ and that $\restr g r = \id_r$
  for every embedding $g : B + r \hookrightarrow B + r$.

  $(a)$ If $T(n, B + r) < \infty$ then $T(n - j, B) < \infty$ for all $n \in \NN$ and $0 \le j \le \min\{n,r\}$.

  $(b)$ If $T(n, B + r) < \infty$ then  $T(n, B + r) = \sum_{j=0}^{\min\{n,r\}} \binom rj \cdot T(n - j, B)$.
\end{LEM}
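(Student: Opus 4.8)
The plan is to exploit the rigidity hypothesis to pin down exactly what the copies of $A = B + r$ look like, and then to analyze any coloring type by type via Lemma~\ref{fbrd-scat-all.lem.1-additive}. First I would record the structural consequence of the assumption $\restr g r = \id_r$. If $w : B + r \hookrightarrow B + r$ is any embedding, then $w(r) = r$; since every $b \in B$ lies strictly below $\min r$ and $w$ is order-preserving, $w(b) < w(\min r) = \min r$, so $w(b) \in B$ and therefore $w(B) \subseteq B$. Thus $\im(w) = U + r$ with $U = w(B)$ a copy of~$B$. Conversely, given any $U \subseteq B$ order-isomorphic to $B$, extending an isomorphism $B \to U$ by $\id_r$ is a self-embedding of $A$ with image $U + r$. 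Hence the copies of $A$ inside $A$ are \emph{exactly} the sets $U + r$ with $U$ a copy of~$B$. I would also note the finite partition $\Emb(n, B + r) = \UNION_{\tau \subseteq r,\, |\tau| \le n} \Emb_\tau(n, B + r)$, with exactly $\binom rj$ type classes of size $|\tau| = j$.

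For part $(a)$ I would argue by contraposition. Suppose $T(n - j_0, B) = \infty$ for some $0 \le j_0 \le \min\{n, r\}$ and fix a type $\tau_0 \subseteq r$ with $|\tau_0| = j_0$ (possible since $j_0 \le r$). Given any target $N$, choose $k$ and a coloring $\psi : \Emb(n - j_0, B) \to k$ witnessing that no copy of $B$ reduces $\psi$ below $N$ colors. Pulling $\psi$ back through the bijection $\Phi : \Emb_{\tau_0}(n, B + r) \to \Emb(n - j_0, B)$ of Lemma~\ref{fbrd-scat-all.lem.1-additive}, and coloring every other type by one fixed color, produces $\chi : \Emb(n, B + r) \to k$. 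Since every copy of $A$ is some $U + r$, restricting $\chi$ to $\Emb_{\tau_0}(n, U + r)$ recovers $\psi$ on $\Emb(n - j_0, U)$, so $|\chi(\Emb(n, U + r))| \ge |\chi(\Emb_{\tau_0}(n, U + r))| \ge N$. Hence $T(n, B + r) \ge N$ for all $N$, i.e.\ $T(n, B + r) = \infty$, which is the contrapositive of~$(a)$.

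For part $(b)$ I would prove the two inequalities separately, now knowing from $(a)$ that each $T(n - j, B)$ is finite. For the upper bound, given $\chi : \Emb(n, B + r) \to k$, I would enumerate the finitely many types and apply Lemma~\ref{fbrd-scat-all.lem.1-additive} successively, each step passing to a further copy of~$B$; the key observation is that shrinking $U$ to $U' \subseteq U$ only shrinks each $\Emb_\tau(n, U' + r) \subseteq \Emb_\tau(n, U + r)$, so a bound established for an earlier type persists through later steps. The resulting copy $U + r$ then satisfies $|\chi(\Emb_\tau(n, U + r))| \le T(n - |\tau|, B)$ for every $\tau$, and summing over types gives $|\chi(\Emb(n, U + r))| \le \sum_{j=0}^{\min\{n,r\}} \binom rj \, T(n - j, B)$. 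For the lower bound I would combine, over all types $\tau$, the ``hard'' colorings $\psi_\tau$ of $\Emb(n - |\tau|, B)$ that witness $T(n - |\tau|, B)$, each drawn from a \emph{disjoint} palette; pulling each back through the corresponding $\Phi$ yields one coloring on which every copy $U + r$ spends at least $\sum_{\tau} T(n - |\tau|, B)$ colors, since the disjoint palettes make the type contributions additive and each type class independently forces its full quota.

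The step I expect to be the main obstacle is the lower bound of~$(b)$: it is precisely here that the rigidity hypothesis is indispensable, as it guarantees that the top block $r$ cannot be displaced, so that in every copy each type class $\Emb_\tau(n, U + r)$ is genuinely present and independently forced. One must also be careful that the disjoint-palette device is what upgrades the bound from the maximum of the $T(n - |\tau|, B)$ to their sum.
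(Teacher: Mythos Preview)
Your proposal is correct and follows essentially the same approach as the paper: contraposition via a lifted ``hard'' coloring for~$(a)$, iterated application of Lemma~\ref{fbrd-scat-all.lem.1-additive} type by type for the upper bound in~$(b)$, and a disjoint-palette combination of witness colorings for the lower bound. Your explicit upfront observation that the copies of $B+r$ are exactly the sets $U+r$ with $U$ a copy of $B$ is a clean way to package the rigidity hypothesis; the paper leaves this implicit and instead works directly with embeddings $w$ and the identity $\tp(w\circ f)=\tp(f)$, but the content is the same.
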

\begin{proof}
  $(a)$ Assume that $T(n - j, B) = \infty$ for some $0 \le j \le \min\{n,r\}$ and let us show that
  $T(n, B + r) = \infty$ by showing that $T(n, B + r) \ge t$ for every $t \in \NN$.
  Fix a $t \in \NN$. Because $T(n - j, B) = \infty$ there is a coloring $\chi : \Emb(n - j, B) \to k$ for some $k \ge t$
  such that $|\chi(w \circ \Emb(n - j, B))| \ge t$ for every $w : B \hookrightarrow B$.
  Define $\chi' : \Emb(n, B + r) \to k$ as follows:
  $$
    \chi'(f) = \begin{cases}
      \chi(\restr f {n - j}), & |\tp(f)| = j,\\
      0, & \text{otherwise}.
    \end{cases}
  $$
  Take any $g : B + r \hookrightarrow B + r$. Clearly, $\restr g B : B \hookrightarrow B$.
  Let us show that
  $$
    \chi'(g \circ \Emb(n, B + r)) \supseteq \chi(\restr gB \circ \Emb(n - j, B)).
  $$
  Take any $f \in \Emb(n - j, B)$ and let $h : j \to r$ be the inclusion $i \mapsto i$. Put
  $f' = f + h : n \hookrightarrow B + r$. Since $|\tp(f')| = j$ we have that
  $$
    \chi'(g \circ f') = \chi(\restr{(g \circ f')}{n - j}) = \chi(\restr gB \circ \restr{f'}{n - j}) = \chi(\restr gB \circ f).
  $$
  Therefore,
  $
    |\chi'(g \circ \Emb(n, B + r))| \ge |\chi(\restr gB \circ \Emb(n - j, B))| \ge t
  $.

  \medskip
  
  $(b)$ 
  Fix an $n \in \NN$ and assume that $T(n, B + r) < \infty$.
  Then $T(n - j, B) < \infty$ for all $0 \le j \le \min\{n,r\}$ (by $(a)$).
  Let $Q = \{ \tau \subseteq r : |\tau| \le n \}$ be the set of all the types realized by
  members of $\Emb(n, B + r)$. Let $Q = \{\tau_0, \tau_1, \ldots, \tau_{t-1}\}$ so that $|Q| = t$.
  Note that $t = \sum_{j=0}^{\min\{n,r\}} \binom rj$.

  Fix a $k \ge 2$ and a coloring
  $
    \chi : \Emb(n, B + r) \to k
  $.
  By Lemma~\ref{fbrd-scat-all.lem.1-additive} there is a $U_0 \subseteq B$ order-isomorphic to $B$ such that
  $$
    |\chi(\Emb_{\tau_0}(n, U_0 + r))| \le T(n - |\tau_0|, B).
  $$
  By the same lemma for each $j \in \{1, \ldots, t-1\}$ we then inductively obtain
  a $U_j \subseteq U_{j-1}$ order-isomorphic to $U_{j-1}$ (and hence to $B$) such that
  $$
    |\chi(\Emb_{\tau_j}(n, U_j + r))| \le T(n - |\tau_j|, B).
  $$
  Then, using the fact that $U_{t-1} \subseteq U_j$ we have that
  \begin{align*}
    |\chi(\Emb(n, U_{t-1} + r))|
    &= \sum_{j < t} |\chi(\Emb_{\tau_j}(n, U_{t-1} + r))|\\
    &\le \sum_{j < t} |\chi(\Emb_{\tau_j}(n, U_{j} + r))|\\
    &\le \sum_{j < t} T(n - |\tau_{j}|, B) \le \sum_{j=0}^{\min\{n,r\}} \binom rj \cdot T(n - j, B).
  \end{align*}

  In order to conclude the proof we have to show that there exists a coloring
  $\chi : \Emb(n, B + r) \to k$ where $k \ge \sum_{j=0}^{\min\{n,r\}} \binom rj \cdot T(n - j, B)$
  such that $|\chi(w \circ \Emb(n, B + r))| \ge \sum_{j=0}^{\min\{n,r\}} \binom rj \cdot T(n - j, B)$
  for every embedding $w : B + r \hookrightarrow B + r$.

  Since $T(n - j, B)$ is the big Ramsey degree of $n - j$ in $B$, for every $0 \le j \le \min\{n,r\}$ there is
  a coloring $\chi_j : \Emb(n-j, B) \to k_j$ where $k_j \ge T(n - j, B)$ such that
  $|\chi_j(v \circ \Emb(n - j, B))| \ge T(n - j, B)$ for every embedding $v : B \hookrightarrow B$.
  Define
  $$
    \chi : \Emb(n, B + r) \to \UNION_{\tau \in Q} \{\tau\} \times k_{|\tau|}
  $$
  as follows: for an $f \in \Emb(n, B + r)$ let $\tau = \tp(f)$ and $j = |\tau|$, and then put
  $$
    \chi(f) = (\tau, \chi_j(\restr{f}{n - j})).
  $$

  Take any embedding $w : B + r \hookrightarrow B + r$. By the assumption we know that $\restr wr = \id_r$.
  Clearly, $\chi(w \circ \Emb(n, B + r)) = \UNION_{\tau \in Q} \chi(w \circ \Emb_\tau(n, B + r))$. Let us show that
  this is a disjoint union.
  
  Take any $f \in \Emb(n, B + r)$. Note first that $\tp(w \circ f) = \tp(f)$
  because $\restr wr = \id_r$. For $j = |\tp(f)|$ we then have
  $$
    \chi(w \circ f) = (\tp(f), \chi_j(\restr{(w \circ f)}{n - j})) = (\tp(f), \chi_j(\restr{w}{B} \circ \restr{f}{n - j})).
  $$
  The claim now follows immediately, because the first component of $\chi(w \circ f)$ is $\tp(f)$.
  
  Consequently, $|\chi(w \circ \Emb(n, B + r))| = \sum_{\tau \in Q} |\chi(w \circ \Emb_\tau(n, B + r))|$.
  Now, take any $\tau \in Q$ and let $j = |\tau|$. Then
  \begin{align*}
    \chi(w \circ \Emb_\tau(n, B + r))
    &= \{ (\tau, \chi_j(\restr wB \circ \restr{f}{n-j})) : f \in \Emb_\tau(n, B + r)\}\\
    &= \{ (\tau, \chi_j(\restr wB \circ f')) : f' \in \Emb(n - j, B)\}.
  \end{align*}
  Therefore,
  \begin{align*}
    |\chi(w \circ \Emb(n, B + r))|
    & = \sum_{\tau \in Q} |\chi(w \circ \Emb_\tau(n, B + r))|\\
    & = \sum_{\tau \in Q} |\chi_{|\tau|}(\restr wB \circ \Emb(n - |\tau|, B)))|\\
    & \ge \sum_{\tau \in Q} T(n - |\tau|, B) \text{\quad [by the choice of $\chi_{|\tau|}$]}\\
    & = \sum_{j=0}^{\min\{n,r\}} \binom rj T(n - j, B). \qedhere
  \end{align*}
\end{proof}

\begin{THM}\label{fbrd-scat-all.thm.monotonicity}
  Let $A$ be a countable chain and $m, n \in \NN$. If $m \le n$ then $T(m, A) \le T(n, A)$.
\end{THM}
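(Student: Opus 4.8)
The plan is to reduce the statement to the four monotonicity lemmas already established (Lemmas~\ref{fbrd-scat-all.lem.no-max}, \ref{fbrd-scat-all.lem.B+omega*}, \ref{fbrd-scat-all.lem.B+r} and~\ref{fbrd-scat-all.lem.A+m}) by a case analysis on the top of $A$; since all of these lemmas concern infinite chains, I take $A$ to be infinite. If $A$ has no maximal element, then Lemma~\ref{fbrd-scat-all.lem.no-max} applies directly and we are done. So assume $A$ has a maximal element and consider the final segment
$M = \{a \in A : \{x \in A : x > a\} \text{ is finite}\}$. First I would record that $M$ is a final segment (if $a\in M$ and $b>a$ then $\{x>b\}\subseteq\{x>a\}$), that every nonempty $S\subseteq M$ has a maximum (search inside the finite set of elements of $S$ lying above any fixed member of $S$), and that each element of $M$ has only finitely many elements above it. These observations force $M$ to be order-isomorphic either to a finite chain $r\in\NN$ or to $\omega^*$, and in either case $A = B + M$ with $B = A \setminus M$.

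If $M \cong \omega^*$ we obtain $A = B + \omega^*$ and conclude by Lemma~\ref{fbrd-scat-all.lem.B+omega*}. If $M \cong r$ is finite, one checks that $B$ has no maximal element: were $b$ maximal in $B$, then every element above $b$ would lie in $M$, so $M = \{x>b\}$, which is infinite because $b\in B$, contradicting the finiteness of $M$. Thus $A = B + r$ with $B$ infinite and without maximal element, and (after renaming so that $B \sec r = \0$, where $r = \{0 < 1 < \dots < r-1\}$ names the top $r$ elements) I split according to the self-embeddings of $A$: either there is a self-embedding $\hat g : A \hookrightarrow A$ with $\hat g(0) \notin r$, in which case Lemma~\ref{fbrd-scat-all.lem.B+r} applies, or every self-embedding $g$ satisfies $g(0) \in r$.

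In the latter, \emph{rigid}, case I would first upgrade the hypothesis to $\restr g r = \id_r$ for every self-embedding $g$: since $g$ is strictly increasing, $g(0) = i \in r$ forces $g(1),\dots,g(r-1)$ to be $r-1$ distinct elements strictly above $i$, hence to lie in the $(r-1-i)$-element final segment $\{i+1,\dots,r-1\}$; counting gives $i=0$, and then $g(j)=j$ for all $j<r$. This is exactly the hypothesis of Lemma~\ref{fbrd-scat-all.lem.A+m}. To finish, if $T(n,A)=\infty$ there is nothing to prove; otherwise Lemma~\ref{fbrd-scat-all.lem.A+m}$(a)$ (with $j=0$) gives $T(n,B)<\infty$, so $T(i,B)<\infty$ for all $i\le n$ by Lemma~\ref{fbrd-scat-all.lem.no-max}, and the upper-bound argument underlying Lemma~\ref{fbrd-scat-all.lem.A+m}$(b)$ (which rests on Lemma~\ref{fbrd-scat-all.lem.1-additive}) yields
$$
  T(m,A) \le \sum_{j=0}^{\min\{m,r\}} \binom rj T(m-j,B)
  \le \sum_{j=0}^{\min\{n,r\}} \binom rj T(n-j,B) = T(n,A),
$$
where the middle inequality holds termwise because $\min\{m,r\}\le\min\{n,r\}$ and $T(m-j,B)\le T(n-j,B)$ by Lemma~\ref{fbrd-scat-all.lem.no-max} (using $m-j\le n-j$ and that $B$ has no maximal element), and the last equality is Lemma~\ref{fbrd-scat-all.lem.A+m}$(b)$.

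I expect the rigid case to be the main obstacle. There $A$ has a maximum and the top block $r$ cannot be slid downward, so the copy-pushing trick used in Lemmas~\ref{fbrd-scat-all.lem.no-max}--\ref{fbrd-scat-all.lem.B+r} is unavailable, and one is forced to combine the exact additive formula of Lemma~\ref{fbrd-scat-all.lem.A+m}$(b)$ with monotonicity over the base $B$. The supporting difficulties are verifying the $\omega^*$/finite classification of $M$ and, within the finite case, cleanly establishing the slide-versus-rigid dichotomy together with the finiteness bookkeeping required before the additive formula can be invoked.
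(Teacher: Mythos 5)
Your proposal is correct and takes essentially the same route as the paper: the identical case split (no maximal element; $A = B+\omega^*$; $A = B+r$ with a self-embedding moving the top block down; the rigid case) resolved by Lemmas~\ref{fbrd-scat-all.lem.no-max}, \ref{fbrd-scat-all.lem.B+omega*}, \ref{fbrd-scat-all.lem.B+r} and~\ref{fbrd-scat-all.lem.A+m}, finishing with the same termwise comparison of the additive formula. The only deviations are streamlinings: your analysis of the final segment $M$ absorbs the paper's separate case of non-scattered chains (handled there via Theorem~\ref{fbrd-scat-all.thm.scat-devlin} and strict monotonicity of the numbers $\mathbb{T}_n$), and you obtain finiteness of $T(m,A)$ from the upper-bound half of the proof of Lemma~\ref{fbrd-scat-all.lem.A+m}$(b)$ rather than from Theorem~\ref{fbrd-scat-all.thm.LAVER} and Lemma~\ref{fbrd-scat-all.lem.2-inf-n-inf} as the paper does.
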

\begin{proof}
  Case 1. If $A$ is a non-scattered chain then $\spec(A) = (\mathbb{T}_1, \mathbb{T}_2, \mathbb{T}_3, \ldots)$
  by Theorem~\ref{fbrd-scat-all.thm.scat-devlin}, and it is a well known fact that $\mathbb{T}_1 < \mathbb{T}_2 < \mathbb{T}_3 < \ldots$.
  
  \medskip
  
  Case 2. If $A$ has no maximal element then Lemma~\ref{fbrd-scat-all.lem.no-max} applies.
  
  \medskip
  
  Case 3. If $A = B + \omega^*$ for some chain $B$ then Lemma~\ref{fbrd-scat-all.lem.B+omega*} applies.

  \medskip

  Case 4. Assume that $A$ is a scattered chain with a maximal element, but $A = B + \omega^*$ for \emph{no} chain $B$.

  Then $A = B + r$ for some $r \in \NN$ and some chain $B$ with no maximal element. Without loss of generality
  we can assume that $B \sec r = \0$. If there is an embedding $g : A \hookrightarrow A$ such that $g(0) \in B$
  Lemma~\ref{fbrd-scat-all.lem.B+r} applies. Therefore, for the rest of the proof assume that
  for every embedding $g : A \hookrightarrow A$ we have that $\restr g r = \id_r$.
  
  If $T(n, A) = \infty$ the statement is trivially true.
  
  If $T(n, A) < \infty$ then $T(m, A) < \infty$ either because $m = 1$ in which case Theorem~\ref{fbrd-scat-all.thm.LAVER}
  applies, or $m \ge 2$ in which case Lemma~\ref{fbrd-scat-all.lem.2-inf-n-inf} applies.
  Anyhow, both $T(m, A)$ and $T(n, A)$ are finite, so by Lemma~\ref{fbrd-scat-all.lem.A+m} (recall that $A = B + r$),
  \begin{align*}
    T(m, B + r) &= \sum_{j=0}^{\min\{m,r\}} \binom rj \cdot T(m - j, B) \text{ and}\\
    T(n, B + r) &= \sum_{j=0}^{\min\{n,r\}} \binom rj \cdot T(n - j, B).
  \end{align*}
  Since $m \le n$ and $B$ is a chain with no maximal element, Lemma~\ref{fbrd-scat-all.lem.no-max} ensures that
  $
    T(m - j, B) \le T(n - j, B)
  $ for all $0 \le j \le \min\{m,r\}$.
  Therefore, $T(m, B + r) \le T(n, B + r)$. This completes the proof.
\end{proof}

\section{Countable scattered chains of infinite rank}
\label{fbrd-scat-all.sec.inf-rank}

In this section we prove that countable scattered chains of infinite Hausdorff rank do not have finite big Ramsey
spectra. We prove the result using a ``semantic representation'' of scattered chains based on condensations.

A map $f : A \to B$ between two chains is a \emph{homomorphism} if $x \le y \Rightarrow f(x) \le (y)$
for all $x, y \in A$. A \emph{condensation of $A$}~\cite{rosenstein} is a surjective homomorphism $c : A \to B$.
Note that any condensation of a scattered chain is scattered.

If $c : A \to B$ is a condensation then $\rho = \ker c$ is an equivalence relation whose classes are intervals of~$A$.
Conversely, for every equivalence relation $\rho$ whose classes are intervals of~$A$
the linear order carries from $A$ to $A / \rho$ in the obvious way and the natural quotient map
$\nat_{A, \rho} : A \to A / \rho$ given by $\nat_{A, \rho}(a) = [a]_\rho$
is a condensation ($[a]_\rho$ is the equivalence class of $a$ with respect to $\rho$.)

A condensation $c : A \to B$ is referred to as \emph{finite} if the following holds: $c(x) = c(y)$ if and only if
$[x, y]_A \union [y, x]_A$ is finite~\cite{rosenstein}.
It is easy to see that if $c_1 : A \to B_1$ and $c_2 : A \to B_2$ are finite condensations of $A$
then $B_1 \cong B_2 \cong A/\theta_A$ where $\theta_A$ is defined by
$$
  (x, y) \in \theta_A \text{\quad if and only if\quad} [x, y]_A \union [y, x]_A \text{ is finite.}
$$
Hence, up to isomorphism of codomains, there is a unique finite condensation
of $A$ that we refer to as \emph{the finite condensation of~$A$} and denote by $\fin_A$.
Note that $\fin_A = \nat_{A, \theta_A}$.
For each finite condensation $\fin_A : A \to A/\theta_A$ and each $y \in A/\theta_A$ the order-type of
$\fin_A^{-1}(y)$ is either $n$ for some $n \in \NN$, or $\omega$, or $\omega^*$ or~$\zeta$.
We say that $y \in A/\theta_A$ is a \emph{finitary point} if $\fin_A^{-1}(y)$ is finite; otherwise we
say that $y$ is an \emph{infinitary point}. Clearly, there do not exist $x, y \in A/\theta_A$
such that $[x, y]_{A/\theta_A} = \{x, y\}$ and both $x$ and $y$ are finitary points.
The following is an immediate consequence:

\begin{LEM}\label{fbrd-scat-all.lem.infinitary-points}
  Let $S$ be a countable scattered chain, let $\fin_S : S \to S'$ be the finite condensation of~$S$ where
  $S' = S / \theta_S$, and let $I \subseteq S'$ be an infinite interval of~$S'$.
  Then there are infinitely many infinitary points in~$I$.
\end{LEM}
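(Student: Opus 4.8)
The plan is to argue by contradiction. Suppose that $I$ contains only finitely many infinitary points, say $p_1 < p_2 < \cdots < p_k$ (the case $k = 0$ being allowed). Removing these finitely many points splits $I$ into at most $k+1$ sub-intervals of $S'$, namely the part of $I$ lying below $p_1$, the parts lying strictly between consecutive $p_i$'s, and the part lying above $p_k$. The union of these pieces is $I$ minus a finite set and is therefore still infinite, so at least one of the finitely many pieces is infinite; call it $J$. By construction $J$ is an infinite interval of $S'$ all of whose points are finitary, and since $S'$ is a (homomorphic image of the) countable chain $S$, it is itself countable.

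The crux is to show that $J$ is densely ordered. Suppose not: then there are $x < y$ in $J$ with no point of $J$ strictly between them. Since $J$ is an \emph{interval} of $S'$, we have $[x, y]_{S'} \subseteq J$, so no point of $S'$ lies strictly between $x$ and $y$ either; that is, $[x, y]_{S'} = \{x, y\}$. But $x$ and $y$ are both finitary, which contradicts the observation recorded just before the lemma that no two consecutive points of $S'$ can both be finitary. Hence $J$ admits no adjacent pair of elements and is densely ordered.

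It remains to convert density into a copy of $\QQ$. Fixing any $a < b$ in $J$, the open interval $(a, b)_J$ is nonempty by density, is countable, and is a dense linear order without endpoints, hence order-isomorphic to $\QQ$ by Cantor's theorem. Thus $\QQ \hookrightarrow J \subseteq S'$, which contradicts the fact that $S'$, being a condensation of the scattered chain $S$, is scattered. This contradiction establishes that $I$ contains infinitely many infinitary points.

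I expect the only genuine subtlety to be in the density step, where one must exploit that $J$ is an \emph{interval} of $S'$ rather than merely a subset: this is exactly what promotes ``adjacent within $J$'' to ``adjacent within $S'$'' and allows the given observation on consecutive finitary points to be applied. The remaining bookkeeping — breaking $I$ at the finitely many infinitary points and isolating an infinite finitary piece — is routine.
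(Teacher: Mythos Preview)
Your proof is correct and follows essentially the same approach as the paper's: both argue by contradiction, isolate an infinite subinterval $J \subseteq I$ consisting entirely of finitary points, and then derive a contradiction from the existence of two adjacent finitary points in~$S'$. The only difference is that the paper asserts in one line that a scattered $J$ must contain an adjacent pair $a < b$ with $[a,b]_{S'} = \{a,b\}$, whereas you spell this out via density and Cantor's theorem; this is a harmless elaboration of the same idea rather than a different route.
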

\begin{proof}
  Suppose that there are only finitely many infinitary points in~$I$.
  Then some infinite subinterval $J \subseteq I$ contains no infinitary points.
  Since $J$ is scattered there exist $a, b \in J$
  such that $a < b$ and $[a, b]_{S'} = \{a, b\}$ --- contradiction.
\end{proof}

Let $A$ be a chain. For each ordinal $\alpha$ let us define 
an equivalence relation $\theta^{[\alpha]}$ on $A$, a chain $A^{[\alpha]} = A / \theta^{[\alpha]}$
and a condensation $\Fin_A^\alpha : A \to A^{[\alpha]}$ inductively as follows.
\begin{itemize}
\item
  Let $\theta^{[0]} = \{ (a,a) : a \in A \}$ and define $\Fin_A^0 : A \to A^{[0]}$ by $\Fin_A^0(a) = \{a\}$.
\item
  For a successor ordinal $\alpha = \beta + 1$ let $\Fin_A^\alpha = \fin_{A^{[\beta]}} \circ \Fin_A^\beta$ and
  $\theta^{[\alpha]} = \ker\,\Fin_A^\alpha$.
\item
  For a limit ordinal $\lambda$ let $\theta^{[\lambda]} = \UNION_{\alpha < \lambda} \theta^{[\alpha]}$
  and define $\Fin_A^\lambda : A \to A^{[\lambda]}$ by $\Fin_A^\lambda(a) = [a]_{\theta^{[\lambda]}}$.
\end{itemize}

We say that an ordinal $\alpha$ is the \emph{finite condensation rank} of a chain~$A$ and write $r_F(A) = \alpha$ if $\alpha$ is
the least ordinal such that $A^{[\alpha]} \cong 1$.
For every countable scattered chain $S$ the finite condensation rank $r_F(S)$ exists and
$r_F(S) = r_H(S)$~\cite{rosenstein}.

The proof that we present in this section heavily relies on a powerful result of Galvin about square bracket partition relations
which express strong counterexamples to ordinary partition relations.
For chains $C$, $B_0$, $B_1$, $B_2$, \ldots, and $n < \omega$ write
$$
  C \longrightarrow [B_0, B_1, B_2, \ldots]^n
$$
to denote that for every coloring $\chi : \Emb(n, C) \to \omega$ there is an $i < \omega$ and
a subchain $U \subseteq C$ such that $U \cong B_i$ and $i \notin \chi(\Emb(n, U))$.
Erd\H os and Hajnal note in~\cite[p.~275]{erdos-hajnal} that in 1971
Galvin proved the following:

\begin{THM}[Galvin 1971]\label{fbrd-scat.thm.bracket}
  If $S$ is a scattered chain that contains no uncountable well-ordered subsets then
  $$
    S \nlongrightarrow [\omega, \omega^2, \omega^2, \omega^3, \omega^3, \ldots, \omega^{\lfloor \frac{i+1}{2}\rfloor + 1},\ldots]^2 \qquad(i \in \omega).
  $$
\end{THM}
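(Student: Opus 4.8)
The statement is a negative partition relation, so the plan is to exhibit a single coloring $\chi : \Emb(2, S) \to \omega$ of pairs of $S$ that witnesses it. Writing $B_i = \omega^{\lfloor (i+1)/2\rfloor + 1}$, I must arrange that color $0$ occurs on some pair of every copy of $\omega$ in $S$, and that for each $k \ge 1$ \emph{both} colors $2k-1$ and $2k$ occur on pairs of every copy of $\omega^{k+1}$ in $S$. Thus the two repeated entries $\omega^{k+1}, \omega^{k+1}$ in the list are exactly what force me to produce two distinguishable colors that are simultaneously unavoidable in each copy of $\omega^{k+1}$. (For those $B_i$ that do not embed into $S$ the requirement is vacuous.)

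The coloring I would use is read off the finite-condensation hierarchy $\Fin_S^\beta : S \to S^{[\beta]}$ introduced above, which represents $S$ as the set of branches of a tree $T$ whose nodes at level $\beta$ are the points of $S^{[\beta]}$ and the children of a node form a chain of type $n$, $\omega$, $\omega^*$ or $\zeta$. Each pair $a<b$ determines a \emph{meet} at level $\rho(a,b)=\min\{\beta : (a,b)\in\theta^{[\beta]}\}$ (always a successor, since $\theta^{[\lambda]}=\UNION_{\beta<\lambda}\theta^{[\beta]}$ at limits), together with a \emph{local similarity type}: the relative position, inside the child-chain of the meet node, of the two children into which $a$ and $b$ descend. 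The point is that this similarity type is \emph{depth-independent}---it records only the local branching pattern, not the absolute level $\rho(a,b)$---so it takes only countably many values and defines a coloring into $\omega$. This is the scattered-chain analogue of Devlin's similarity types for $\QQ$, and the hypothesis that $S$ contains no uncountable well-ordered subset is what guarantees that $T$ has countable height and that the child-chains relevant to well-ordered copies are of type $\le\omega$, keeping the palette countable.

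The arithmetic engine is a projection lemma that I would prove first: if $W\subseteq S$ has $\tp(W)=\omega^{k+1}$ then $\tp(\fin_S(W))\ge\omega^k$. Indeed each fibre $\fin_S^{-1}(y)\cap W$ is a well-ordered subset of a chain of type $n,\omega,\omega^*$ or $\zeta$, hence of type $\le\omega$, so if $\tp(\fin_S(W))=\gamma$ then $\omega^{k+1}=\tp(W)\le\omega\cdot\gamma$, forcing $\gamma\ge\omega^k$. Iterating, $\Fin_S^{j}(W)$ contains a copy of $\omega^{k+1-j}$ for every $j\le k+1$, so a copy of $\omega^{k+1}$ genuinely branches through $k+1$ successive levels of $T$. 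I would then prove by induction on $k$ that every copy of $\omega^{k+1}$ realizes both target colors: the projection lemma lets me pass to $S^{[1]}$ and the copy of $\omega^k$ it contains (applying the inductive hypothesis there), while the two colors $2k-1,2k$ are separated at the topmost relevant meet by an \emph{adjacency bit}---whether the two children used are consecutive in the child-chain or not---both patterns being forced because a copy of $\omega^{k+1}$ uses an $\omega$-indexed family of children at that meet, which necessarily contains both adjacent and non-adjacent pairs. The base case $k=0$ is handled directly: a copy of $\omega$ either meets a single child-chain in an infinite set or descends through infinitely many children, and a designated color is forced in either case.

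The main obstacle, and the genuine content of Galvin's theorem, is precisely the tension between the two requirements: my local coloring must be \emph{depth-independent} (so that a fixed color detects $\omega^{k+1}$ no matter how deep the copy sits in $S$) and yet \emph{doubling} (so that two colors are simultaneously unavoidable). Any coloring by the absolute meet-level $\rho$ fails the first requirement, since copies of $\omega^{k+1}$ occur at unboundedly many levels; hence the argument must recover the ordinal rank of a copy from \emph{relative} branching data only, and then use the projection lemma to show that enough relative branching, of both adjacency parities, is forced at the right level. Carrying out this forcing at every level while keeping the colorings at distinct levels disjoint, so that they assemble into one function into $\omega$, is the delicate combinatorial heart of the proof.
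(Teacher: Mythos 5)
The paper does not prove this statement at all: it is quoted as Galvin's unpublished 1971 result, attributed via Erd\H os--Hajnal, with a pointer to Todor\v cevi\'c's \emph{Oscillations of Sets of Integers} for a modern proof. So there is no in-paper argument to compare yours against; what follows is an assessment of your sketch on its own terms.

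Your framework is reasonable and partly correct: you have parsed the square-bracket relation correctly (color $0$ must appear in every copy of $\omega$, and colors $2k-1$ and $2k$ must \emph{both} appear in every copy of $\omega^{k+1}$), and your projection lemma is sound --- each fibre of $\fin_S$ meets a well-ordered set in a set of type at most $\omega$, so a copy of $\omega^{k+1}$ projects onto a set of type at least $\omega^k$, and hence genuinely branches through $k+1$ condensation levels. The gap is at the central step, the ``doubling'' mechanism. You claim that a copy of $\omega^{k+1}$, using an $\omega$-indexed family of children at its top meet node, ``necessarily contains both adjacent and non-adjacent pairs.'' This is false: the copy may descend only through children indexed by, say, the even integers of an $\omega$-type child-chain, in which case no pair of its elements meets in two adjacent children, and your ``adjacent'' color never occurs on that copy. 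Any such local, order-theoretic bit can be stabilized to a single value by passing to a sparse subfamily, which is exactly why square-bracket relations are hard and why a plain similarity-type coloring (which handles the single-color, ordinary-partition version) does not suffice here. The known proof replaces your adjacency bit by an oscillation invariant $\mathrm{osc}(x,y)$ of subsets of $\omega$, for which one proves a non-stabilization theorem: every copy of the relevant ordinal realizes at least two prescribed oscillation values. Your own closing paragraph correctly identifies this as ``the delicate combinatorial heart of the proof,'' but the mechanism you propose to carry it out does not work, so the argument as sketched does not establish the theorem. (The base case for color $0$ on every copy of $\omega$ is also asserted rather than proved, but that is a minor issue by comparison.)
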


\noindent
A recent proof of Galvin's result can be found in~\cite{Todorcevic-OscInts}.

For notational convenience let $\omega^{(+)} = \omega$ and $\omega^{(-)} = \omega^*$.
For a finite sequence $\delta = (\delta_0, \delta_1, \ldots, \delta_{n-1}) \in \{\Boxed+, \Boxed-\}^n$
let
$$
  \omega^{(\delta)} = \omega^{(\delta_0)} \cdot \omega^{(\delta_1)} \cdot \ldots \cdot \omega^{(\delta_{n-1})}.
$$
Let $\alpha$ be an ordinal. For an $\alpha$-sequence $\delta = (\delta_i)_{i < \alpha} \in \{\Boxed+, \Boxed-\}^\alpha$
and $n < \alpha$ we let $\restr \delta n = (\delta_0, \delta_1, \ldots, \delta_{n-1})$.

\begin{LEM}\label{fbrd-scat-all.lem.2}
  Let $S$ be a countable scattered chain such that $r_H(S) \ge \omega$. There exists
  an $\omega$-sequence $\delta \in \{\Boxed+, \Boxed-\}^\omega = (\delta_0, \delta_1, \ldots)$
  such that $\omega^{(\restr \delta k)} \hookrightarrow S$ for all $k \ge 1$.
\end{LEM}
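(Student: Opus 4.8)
The plan is to exploit the inductive definition of the finite condensation rank together with Hausdorff's characterization, and to build the sequence $\delta$ one coordinate at a time while maintaining the invariant that the relevant tail of the chain still has large rank. Since $r_H(S) = r_F(S) \ge \omega$, the condensation tower $S = S^{[0]} \to S^{[1]} \to S^{[2]} \to \cdots$ never collapses to a point at any finite stage; each $S^{[k]}$ is an infinite scattered chain. I would argue that at each finite stage one of the two ``directions'' $\omega$ or $\omega^*$ must embed into a suitable fibre, and that composing these choices lexicographically yields embeddings of the products $\omega^{(\restr\delta k)}$.

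\medskip

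First I would set up the key local step. Recall that for the finite condensation $\fin_A\colon A \to A/\theta_A$, each fibre $\fin_A^{-1}(y)$ has order type in $\{n : n\in\NN\}\cup\{\omega,\omega^*,\zeta\}$, and that by Lemma~\ref{fbrd-scat-all.lem.infinitary-points} any infinite interval contains infinitely many infinitary points. The claim I would isolate is: if $A$ is an infinite scattered chain whose finite condensation $A/\theta_A$ is still infinite, then either $\omega \hookrightarrow A$ or $\omega^* \hookrightarrow A$, and moreover one can choose a direction $\delta_0\in\{+,-\}$ and an infinite subset of infinitary points of $A/\theta_A$ so that picking one non-finitary element from each fibre in increasing (resp. decreasing) order produces a copy of $\omega^{(\delta_0)}$ along which a copy of the ``next layer'' of $A$ is attached. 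Concretely, an infinitary point of $A/\theta_A$ has fibre containing a copy of $\omega$ or $\omega^*$, and since there are infinitely many such points arranged in the chain $A/\theta_A$ (itself infinite), a Ramsey/pigeonhole argument on the direction label $\delta\in\{+,-\}$ of each fibre, combined with whether the points go ``up'' or ``down'', forces one of $\omega,\omega^*$ to embed.

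\medskip

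Next I would iterate this. The cleanest formulation uses the tower $\Fin_A^k$ directly: because $r_F(S)\ge\omega$, for every $k$ the chain $S^{[k]} = S/\theta^{[k]}$ is infinite, and $S^{[k+1]} = \fin_{S^{[k]}}(S^{[k]})$ is the finite condensation of $S^{[k]}$, which is again infinite. Applying the local step to $A = S^{[k]}$ yields a direction $\delta_k\in\{+,-\}$ such that $\omega^{(\delta_k)}$ embeds into $S^{[k]}$ ``over a point''. The point is that an embedding $\omega^{(\restr\delta k)}\hookrightarrow S^{[k]}$ together with the local embedding $\omega^{(\delta_k)}\hookrightarrow S^{[k-1]}$ sitting inside a single fibre of $\fin_{S^{[k-1]}}$ lifts, via the condensation map, to an embedding $\omega^{(\restr\delta{k+1})}\hookrightarrow S^{[k-1]}$; peeling back through $k$ condensation layers and down to $S^{[0]}=S$ gives $\omega^{(\restr\delta k)}\hookrightarrow S$. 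Formally I would prove by downward induction on the layer index that a product of length $k$ of the chosen directions embeds into $S$, the inductive step replacing a point of $S^{[j+1]}$ by the fibre over it in $S^{[j]}$ (which contains a copy of $\omega^{(\delta_j)}$ since that point is infinitary).

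\medskip

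The main obstacle will be the bookkeeping in the lift: one must choose, at each layer, the infinitary points and the direction \emph{coherently}, so that a single direction $\delta_k$ works uniformly for arbitrarily long products rather than having to be revised as $k$ grows. I would handle this by fixing the whole sequence $\delta$ first, choosing $\delta_k$ from the local step applied to $S^{[k]}$ once and for all, and then showing each finite product embeds; since the local step only needs that $S^{[k]}$ and its finite condensation are infinite (guaranteed by $r_F(S)\ge\omega$), the choice of $\delta_k$ does not depend on the target length $k$. The only delicate verification is that when we substitute a copy of $\omega$ or $\omega^*$ into a fibre, the lexicographic order of the indexed sum matches the order type $\omega^{(\delta_k)}\cdot\omega^{(\restr\delta k)}$ we want, which is exactly the associativity of the product of chains under the finite-condensation fibration; I would record this as a short computation rather than belabour it.
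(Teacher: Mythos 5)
Your overall frame --- the condensation tower $S^{[0]}, S^{[1]}, S^{[2]}, \ldots$, infinitary points, and building the product by substituting fibres for points --- is the same as the paper's. But there is a genuine gap at exactly the point you flag and then wave away: choosing each $\delta_k$ \emph{independently}, ``from the local step applied to $S^{[k]}$ once and for all,'' does not give a coherent sequence. The local step only tells you that \emph{some} fibre at level $k$ has order type $\omega$ (or $\omega^*$); it does not tell you that the particular fibres you meet when you unwind a point of $S^{[k+1]}$ down to $S$ have that type, nor that the infinitary points inside a single fibre all carry the same lower-level word. Concretely, take $S = \omega^\omega + (\omega^\omega)^*$. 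Then $r_H(S) \ge \omega$, and at every level $k$ the chain $S^{[k]}$ has fibres of type $\omega$ (in the first half) and of type $\omega^*$ (in the second half), so your local step permits either choice of $\delta_k$ at every level. Yet if you happen to choose $\delta_0 = +$ and $\delta_1 = -$, the chain $\omega^{(+,-)}$ does \emph{not} embed into $S$: any embedding into a sum $A + B$ splits $\omega^{(+,-)}$ into an initial piece going to the well-order $\omega^\omega$ and a final piece going to the reverse well-order $(\omega^\omega)^*$, and one checks that every nonempty initial segment of $\omega^{(+,-)}$ contains a copy of $\omega^*$ while the whole chain contains a copy of $\omega$ --- so neither piece can absorb what it must. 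Thus a sequence assembled by independent level-by-level choices can simply be wrong, and your claim that ``the choice of $\delta_k$ does not depend on the target length $k$'' is unjustified.

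What the paper does to repair exactly this is a bottom-up propagation of \emph{words} rather than a top-down choice of single letters. Each infinitary point $y$ of $S^{[j+1]}$ is assigned a label $\ell\delta \in \{+,-\}^{j+1}$ certifying $\omega^{(\ell\delta)} \hookrightarrow \fin^{-1}(\cdots\fin^{-1}(y))\subseteq S$: since the fibre $\fin^{-1}_{S^{[j]}}(y)$ is an infinite interval, it contains infinitely many infinitary points of $S^{[j]}$ (Lemma~\ref{fbrd-scat-all.lem.infinitary-points}), and by pigeonhole on the $2^j$ possible labels infinitely many of them carry a \emph{common} word $\ell$; the last letter $\delta$ records whether the fibre itself has type $\omega$/$\zeta$ or $\omega^*$. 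This pigeonhole inside each fibre is the coherence mechanism your proposal is missing. Finally, the set of realized words forms an infinite, finitely branching tree under the prefix order, and K\H onig's Lemma extracts one infinite branch $\delta$ that works simultaneously for all $k$ --- this last step is also needed, since a priori the realized words of length $k$ need not all extend to realized words of length $k+1$. If you add the within-fibre pigeonhole and the K\H onig's Lemma extraction, your argument becomes the paper's proof.
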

\begin{proof}
  Each of the chains $S \cong S^{[0]}$, $S^{[1]}$, $S^{[2]}$, \ldots, $S^{[n]}$, \ldots\ is
  a countably infinite scattered chain. Recall that $\fin_{S^{[j]}} : S^{[j]} \to S^{[j+1]}$, $j \ge 0$.
  For each $j \in \NN$ we shall label
  infinitary points of $S^{[j]}$ by elements of $\{\Boxed+, \Boxed-\}^j$ and along the way
  build a set $\calT$ of finite words over $\{\Boxed+, \Boxed-\}$ as follows. 
  To start the induction put the empty word $\epsilon$ in $\calT$ and
  label each infinitary point $y \in S^{[1]}$ by $+$ if the order type of $\fin_{S^{[0]}}^{-1}(y)$ is $\omega$ or $\zeta$;
  otherwise label the point by~$-$. Add all the labels assigned to infinitary points of $S^{[1]}$ to~$\calT$.
  Note that $\ell \in \calT$ means that $\omega^{(\ell)} \hookrightarrow S$.
  
  Assume that all the infinitary points of $S^{[j]}$ have been labelled by elements of $\{\Boxed+, \Boxed-\}^j$.
  Take any infinitary point $y \in S^{[j+1]}$. Since $\fin^{-1}_{S^{[j]}}(y)$ is an infinite interval of $S^{[j]}$,
  it contains infinitely many infinitary points (Lemma~\ref{fbrd-scat-all.lem.infinitary-points}).
  Each of the infinitary points in $\fin^{-1}_{S^{[j]}}(y)$ is labelled by one of the $2^j$ labels,
  so there is a label $\ell \in \{\Boxed+, \Boxed-\}^j$ which occurs infinitely many times in~$\fin^{-1}_{S^{[j]}}(y)$.
  If the order type of $\fin^{-1}_{S^{[j]}}(y)$ is $\omega$ or $\zeta$ label $y$ by $\ell\Boxed+$; otherwise label $y$ by $\ell\Boxed-$.
  Add all the labels assigned to infinitary points of $S^{[j+1]}$ to~$\calT$.
  Note again that $\ell \in \calT$ means that $\omega^{(\ell)} \hookrightarrow S$.
  
  The prefix ordering turns $\calT$ into an infinite (not necessarily full) binary tree, so by K\H onig's Lemma
  there is an infinite branch $\delta \in \{\Boxed+, \Boxed-\}^\omega = (\delta_0, \delta_1, \ldots)$.  Clearly, the construction
  ensures that $\omega^{(\restr \delta k)} \hookrightarrow S$ for all $k \ge 1$.
\end{proof}

\begin{THM}\label{fbrd-scat-all.thm.infty}
  Let $S$ be a countable scattered chain such that $r_H(S) \ge \omega$. Then
  $T(n, S) = \infty$ for every $2 \le n < \omega$.
\end{THM}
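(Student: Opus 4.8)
The plan is to use Lemma~\ref{fbrd-scat-all.lem.2} to extract, from the infinite Hausdorff rank of $S$, an $\omega$-sequence $\delta = (\delta_0, \delta_1, \ldots) \in \{\Boxed+, \Boxed-\}^\omega$ such that $\omega^{(\restr \delta k)} \hookrightarrow S$ for all $k \ge 1$, and then feed a suitable one of these chains $\omega^{(\restr \delta k)}$ into Galvin's square-bracket partition relation (Theorem~\ref{fbrd-scat.thm.bracket}) to manufacture, for each prescribed $t$, a coloring of $\Emb(n, S)$ with no subcopy of $S$ using fewer than $t$ colors. Since it suffices to treat the case $n = 2$ --- by monotonicity (Theorem~\ref{fbrd-scat-all.thm.monotonicity}), if $T(2, S) = \infty$ then $T(n, S) = \infty$ for all $n \ge 2$ --- the whole argument reduces to showing $T(2, S) = \infty$.

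**The core reduction to $n=2$ and the role of Galvin's theorem.**

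First I would observe that $S$, being countable and scattered, contains no uncountable well-ordered subset, so Theorem~\ref{fbrd-scat.thm.bracket} applies to $S$ itself. The negative partition relation
\[
  S \nlongrightarrow [\omega, \omega^2, \omega^2, \omega^3, \omega^3, \ldots]^2
\]
delivers a coloring $\chi : \Emb(2, S) \to \omega$ with the following property: for \emph{every} $i < \omega$ and every subchain $U \subseteq S$ with $U \cong B_i$ (the $i$-th chain in the target list, which is a power of $\omega$), we have $i \in \chi(\Emb(2, U))$. The key point is that each target chain $B_i$ is of the form $\omega^{m}$ for some finite $m$, and by Lemma~\ref{fbrd-scat-all.lem.2} a long enough prefix $\omega^{(\restr \delta k)}$ embeds into $S$. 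The subtlety is that $\omega^{(\restr \delta k)}$ is a product of $\omega$'s and $\omega^*$'s rather than a pure power $\omega^k$; but a pure power $\omega^k$ embeds into any such $\omega^{(\delta)}$ of length $k$ whenever $\delta_{k-1} = \Boxed+$ (and into its reverse otherwise), so by passing to a long prefix one secures embedded copies of $\omega^m$ for every finite~$m$.

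**Forcing arbitrarily many colors.**

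The heart of the argument is to convert the single ``$i \in \chi(\Emb(2, U))$'' guarantee into a lower bound on the number of colors used on a copy of $S$ itself. The plan is to use the full list $B_0, B_1, B_2, \ldots = \omega, \omega^2, \omega^2, \omega^3, \ldots$ and exploit that $S$ embeds every $\omega^m$. Fix a target $t$. Inside any copy of $S$ in $S$ I would locate disjoint subchains $U_0, U_1, \ldots, U_{t-1}$ with $U_i \cong B_i$; Galvin's relation forces color $i$ to appear in $\chi(\Emb(2, U_i))$, and since the colors $0, 1, \ldots, t-1$ are distinct, the copy of $S$ sees at least $t$ colors. As $t$ was arbitrary, $T(2, S) = \infty$. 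I expect the \textbf{main obstacle} to be the bookkeeping that guarantees these copies $U_i$ can be placed \emph{simultaneously and disjointly} inside an \emph{arbitrary} copy of $S$ (not just inside $S$): one must check that every self-copy of $S$ still contains, for each finite $m$, an embedded $\omega^m$ and hence the whole initial segment of the target list. This follows because any $U \subseteq S$ with $U \cong S$ has the same infinite Hausdorff rank as $S$, so Lemma~\ref{fbrd-scat-all.lem.2} applies verbatim to $U$ and yields the required powers of $\omega$ inside $U$; arranging them disjointly is then a routine consequence of each $\omega^m$ having a final segment isomorphic to itself, leaving room for the next piece.
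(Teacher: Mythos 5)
Your skeleton matches the paper's: reduce to $n=2$, extract the sequence $\delta$ via Lemma~\ref{fbrd-scat-all.lem.2}, and feed powers of $\omega$ into Galvin's square-bracket relation. However, there is a genuine gap at the central step. You claim that $\omega^k$ embeds into $\omega^{(\restr\delta k)}$ whenever $\delta_{k-1}=\Boxed+$, and from this that $S$ embeds $\omega^m$ for every finite $m$. Both claims are false. For the first, take $\delta=(\Boxed-,\Boxed+)$: then $\omega^{(\delta)}$ is an $\omega^*$-indexed (or, depending on the convention, $\omega$-indexed) sum of blocks each of Hausdorff rank one, and every final segment of it has rank one, so $\omega^2$ does not embed; what is actually true (and what the paper uses) is that $\omega^m\hookrightarrow\omega^{(\restr\delta k)}$ where $m$ is the \emph{number of occurrences of $\Boxed+$} in $\restr\delta k$, not a condition on the last symbol. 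For the second, consider $S=(\omega^\omega)^*$: this is a countable scattered chain with $r_H(S)=\omega$, yet it is conversely well-ordered, so not even $\omega$ embeds into it. For such an $S$ your argument collapses entirely: Galvin's relation applied to $S$ still produces a coloring $\gamma$, but the guarantee ``$i\in\gamma(\Emb(2,U))$ for every $U\cong\omega^{n_i}$'' is vacuous because $S$ contains no copy of any $\omega^{n_i}$, and no lower bound on the number of colors follows.

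The missing idea is a dichotomy on $\delta$. If $\Boxed+$ occurs infinitely often in $\delta$, then indeed $\omega^i\hookrightarrow S$ for all $i$, and your argument goes through (note that the copies $H_i\cong\omega^{n_i}$ inside a self-copy $S'$ need not be disjoint --- it suffices that each one forces its color $i$ to appear in $\chi(\Emb(2,S'))$ --- and that the $\omega$-coloring must be truncated to finitely many colors, e.g.\ $\chi_t(f)=\min\{t-1,\gamma(f)\}$, to conform to the definition of big Ramsey degree). If $\Boxed+$ occurs only finitely often, then $\Boxed-$ occurs infinitely often, so $(\omega^*)^i\hookrightarrow S$, i.e.\ $\omega^i\hookrightarrow S^*$ for all $i$; one then applies Galvin's theorem to $S^*$ (which is still scattered with no uncountable well-ordered subset), concludes $T(2,S^*)=\infty$, and finishes with the elementary observation that $T(2,S)=T(2,S^*)$. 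Without this second case your proof does not cover all chains in the hypothesis of the theorem.
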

\begin{proof}
  Due to Lemma~\ref{fbrd-scat-all.lem.2-inf-n-inf} it suffices to show that $T(2, S) = \infty$.

  According to Lemma~\ref{fbrd-scat-all.lem.2} there exists
  an $\omega$-sequence $\delta \in \{\Boxed+, \Boxed-\}^\omega = (\delta_0, \delta_1, \ldots)$
  such that $\omega^{(\restr \delta k)} \hookrightarrow S$ for all $k \ge 1$.

  \medskip
  
  Case 1: The symbol $+$ occurs infinitely many times in $\delta$.
  
  Since $S$ is a countable scattered chain Theorem~\ref{fbrd-scat.thm.bracket} applies, so
  $$
    S \nlongrightarrow [\omega^{n_0}, \omega^{n_1}, \omega^{n_2}, \omega^{n_3}, \omega^{n_4}, \ldots]^2,
  $$
  where $n_i = \lfloor \frac{i+1}{2}\rfloor + 1$, $i \in \omega$. Therefore, there exists a coloring
  $\gamma : \Emb(2, S) \to \omega$ with the following property: for every $i < \omega$ and every
  subchain $H \subseteq S$ such that $H \cong \omega^{n_i}$ we have that $i \in \gamma(\Emb(2, H))$.

  Note that if $+$ appears $m$ times in $\restr \delta k$ then, clearly, $\omega^m \hookrightarrow \omega^{(\restr \delta k)}$. Analogously,
  if $-$ appears $m$ times in $\restr \delta k$ then $(\omega^*)^m \hookrightarrow \omega^{(\restr \delta k)}$.
  Since $+$ occurs infinitely many times in $\delta$ it follows that $\omega^i \hookrightarrow S$ for all $i \in \NN$.
  
  Now, take any $t \ge 2$ and consider the coloring $\chi_t : \Emb(2, S) \to t$ given by
  $\chi_t(f) = \min\{t-1, \gamma(f)\}$. Let $S'$ be an arbitrary subchain of $S$ order-isomorphic to $S$.
  Since $\omega^i \hookrightarrow S \cong S'$ for all $i \in \NN$,
  for every $i < t$ there is a subchain $H_i \subseteq S'$ order-isomorphic to $\omega^{n_i}$.
  By the construction of $\chi_t$ it then follows that $i \in \chi_t(\Emb(2, H_i)) \subseteq \chi_t(\Emb(2, S'))$.
  Therefore, $|\chi_t(\Emb(2, S'))| \ge t$.
  This concludes the proof that $T(2, S) = \infty$ in Case~1.

  \medskip
  
  Case 2: The symbol $+$ occurs only finitely many times in $\delta$.
  
  Then the symbol $-$ occurs infinitely many times in $\delta$, so
  $(\omega^*)^i \hookrightarrow S$ for all $i \in \NN$.
  Therefore, $\omega^i \hookrightarrow S^*$ for all $i \in \NN$.
  This time we apply Theorem~\ref{fbrd-scat.thm.bracket} to $S^*$ to conclude that
  $S^* \nlongrightarrow [\omega^{n_0}, \omega^{n_1}, \omega^{n_2}, \omega^{n_3}, \omega^{n_4}, \ldots]^2$,
  and as in Case~1 we conclude that $T(2, S^*) = \infty$. But it is easy to see that $T(2, S) = T(2, S^*)$
  for every chain~$S$. Therefore, $T(2, S) = \infty$.
\end{proof}

\section{Countable scattered chains of finite rank}
\label{fbrd-scat-all.sec.fin-rank}

In order to complete the characterization of countable chains with finite big Ramsey spectra
we still have to consider countable scattered chains of finite Hausdorff rank.
It is out conjecture that all such scattered chains have finite big Ramsey spectra.
To support the conjecture we will prove that the conjecture is true for a class of countable scattered
chains which includes indivisible countable scattered chains.
Our discussion heavily relies on Laver's deep analysis of the structure
of scattered chains presented in~\cite{laver-fraisse-conj} and~\cite{laver-decomposition}.
Note, however, that we use a different (and simpler) tree representation of scattered chains.

A \emph{rooted tree} is a pair $\tau = (T, v_0)$ where $T$ is a partially ordered set, $v_0 \in T$ is the \emph{root} of $T$
and $[v_0, x]_T$ is well-ordered for every $x \in T$. Maximal chains in~$T$ are called the \emph{branches} of $\tau$.
The \emph{height} of a rooted tree is the supremum of order-types of branches in $T$:
$$
  \high(\tau) = \sup\{\tp(b) : b \text{ is a branch in } \tau\}.
$$
For a vertex $x \in T$ let $\succs_\tau(x)$ be the set of all the immediate successors of $x$ and let
$\out_\tau(x) = \{(x, y) : y \in \succs_\tau(x)\}$ be the set of the \emph{outgoing edges}.
A vertex $x \in T$ is a \emph{leaf of $\tau$} if $\succs_\tau(x) = \0$.
Every finite branch starts at the root of the tree and ends in a leaf.

Let $\{b_\xi : \xi < \alpha\}$ be a set of branches of a rooted tree $\tau = (T, v_0)$. The \emph{subtree of $\tau$
induced by branches $b_\xi$, $\xi < \alpha$,} is the subtree of $\tau$ induced by the set of vertices $\UNION_{\xi < \alpha} b_\xi$.

A rooted tree $\tau = (T, v_0)$ is \emph{ordered} if $\out_\tau(x)$ is a chain for every $x \in T$.
If $\high(\tau) \le \omega$ then the linear orders on $\out_\tau(x)$, $x \in T$, uniquely determine a linear
ordering on the vertices of $T$: just traverse the tree using the breadth-first-search strategy. This means that we start
with the root $v_0$, then list the immediate successors of $v_0$ according to the ordering of $\out_\tau(v_0)$, and so on.
We refer to this ordering as the \emph{BFS-ordering of $\tau$}.

A \emph{labelled ordered rooted tree} is an ordered rooted
tree whose vertices are labelled by the elements of some set $L_v$,
and edges are labelled by the elements of some set $L_e$.
For a labelled ordered rooted tree $\tau$ by $L_v(\tau)$ we denote the set of vertex labels that appear in~$\tau$,
and by $L_e(\tau)$ we denote the set of edge labels that appear in~$\tau$.

Let $\tau$ be a labelled ordered rooted tree whose vertices are labelled by elements of $L_v$
and edges are labelled by elements of $L_e$, and let $U \subseteq L_e$.
By $\restr \tau U$ we denote the subtree of $\tau$ induced by all of its branches whose edge labels belong to~$U$.

Let us now recall Laver's analysis of the structure of scattered chains from~\cite{laver-fraisse-conj} and~\cite{laver-decomposition}.
A scattered chain $C$ is \emph{additively indecomposable} if $C \hookrightarrow A + B$ implies
$C \hookrightarrow A$ or $C \hookrightarrow B$ for all scattered chains $A$ and $B$.
Note that the empty chain $\0$ and the one-element chain 1 are additively indecomposable.

\begin{THM}[Laver~\cite{laver-fraisse-conj}]\label{fbrd-scat-spec.thm.laver-1}
  Every scattered chain is a finite sum of additively indecomposable scattered chains.
\end{THM}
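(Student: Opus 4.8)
The plan is to prove Theorem~\ref{fbrd-scat-spec.thm.laver-1} by induction on the Hausdorff rank $r_H(C)$ of the scattered chain $C$, using Hausdorff's characterization recalled above. The base case $r_H(C) = 0$ is immediate, since $\calH_0 = \{0, 1\}$ consists of finite chains, each of which is trivially a finite sum of one-element (hence additively indecomposable) chains. For the inductive step, I would fix a countable scattered chain $C$ with $r_H(C) = \alpha > 0$ and assume the statement holds for all scattered chains of strictly smaller rank.

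\medskip

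First I would invoke Hausdorff's characterization to write $C = \sum_{i \in \ZZ} S_i$ where each summand $S_i$ lies in $\UNION_{\beta < \alpha} \calH_\beta$, so that $r_H(S_i) < \alpha$ for every $i \in \ZZ$. By the induction hypothesis each $S_i$ is a finite sum of additively indecomposable scattered chains. The key idea is then to study the \emph{finite condensation} $\fin_C : C \to C/\theta_C$ and to partition the index set $\ZZ$ according to embeddability type: for scattered chains, the family of additively indecomposable chains that embed into $C$ is well-structured, and one shows that only finitely many ``maximal'' embeddability types of additively indecomposable chains occur among blocks of consecutive summands. Concretely, I would define an equivalence on contiguous intervals of the $\ZZ$-indexed sum under which two summands are related when each embeds into a bounded sum of copies of the other, and argue that this coarsens $C$ into finitely many intervals, each of which is additively indecomposable.

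\medskip

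The main obstacle I anticipate is establishing that the number of additively indecomposable ``pieces'' is genuinely \emph{finite} rather than merely countable: a priori the $\ZZ$-sum could string together infinitely many pairwise non-embeddable indecomposable blocks. The resolution, which is exactly the depth of Laver's theorem, is that the class of scattered order types is better-quasi-ordered under embeddability (this is the Fra\"\i ss\'e conjecture proved by Laver in~\cite{laver-fraisse-conj}). Better-quasi-ordering implies well-quasi-ordering, which forbids infinite antichains and infinite strictly descending sequences; combining these rules out any decomposition of $C$ into infinitely many incomparable indecomposable blocks. Thus I would isolate a lemma asserting that an indexed sum $\sum_{i \in \ZZ} S_i$ of scattered chains decomposes into finitely many maximal additively indecomposable intervals, and reduce the theorem to that lemma via the well-quasi-ordering input.

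\medskip

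Finally, to assemble the finite decomposition, I would take the finitely many maximal intervals $I_1 < I_2 < \cdots < I_k$ produced above, set $C_j = \sum_{i \in I_j} S_i$, and verify that each $C_j$ is additively indecomposable: if $C_j \hookrightarrow A + B$ for scattered $A, B$, the homogeneity of the block $I_j$ forces the copy of $C_j$ to be absorbed into one side, yielding $C_j \hookrightarrow A$ or $C_j \hookrightarrow B$. Then $C = C_1 + C_2 + \cdots + C_k$ is the desired finite sum of additively indecomposable scattered chains, completing the induction. I expect the routine verification of additive indecomposability of each block to be unproblematic once the better-quasi-ordering lemma has done the heavy lifting of bounding the number of blocks.
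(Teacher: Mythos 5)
The paper does not actually prove this statement: it is imported verbatim from Laver's work on Fra\"\i ss\'e's conjecture, so there is no internal proof to compare yours against. Judged on its own terms, your proposal correctly identifies the essential external input (Laver's theorem that scattered order types are better-quasi-ordered, hence well-quasi-ordered, under embeddability), but the way you deploy it contains a genuine gap.

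The problem is the step where you claim that wqo forces the $\ZZ$-indexed sum to coarsen into \emph{finitely} many additively indecomposable blocks. Wqo forbids infinite antichains and infinite strictly \emph{descending} sequences, but it does not forbid an infinite strictly \emph{increasing} sequence of blocks, and that is exactly what occurs: take $C = \sum_{n \in \omega} \omega^n \cong \omega^\omega$, a countable scattered chain of Hausdorff rank $\omega$ whose summands $\omega^n$ are all additively indecomposable. Your proposed equivalence (two summands are related when each embeds into a bounded sum of copies of the other) separates $\omega^n$ from $\omega^m$ for $n < m$, since $\omega^m$ does not embed into any finite sum of copies of $\omega^n$; so your relation has infinitely many classes here and produces no finite decomposition, even though the correct answer is a single block, $\omega^\omega$ being itself indecomposable. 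Likewise, ``finitely many maximal embeddability types'' is not a consequence of wqo, which controls minimal elements, not maximal ones. The standard way to extract the theorem from bqo is not a block-counting argument but a minimal-counterexample argument using only well-foundedness: if some scattered chain is not a finite sum of indecomposables, choose such a $C$ that is $\hookrightarrow$-minimal among them; $C$ is not indecomposable, so some embedding $C \hookrightarrow A' + B'$ with $C \not\hookrightarrow A'$ and $C \not\hookrightarrow B'$ splits $C = A + B$ into an initial and a final segment with $A \hookrightarrow A'$ and $B \hookrightarrow B'$, hence both strictly below $C$ under embeddability; by minimality each of $A$ and $B$ is a finite sum of indecomposables, hence so is $C$ --- a contradiction. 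This route also makes the Hausdorff-rank induction, and the unproved ``homogeneity'' claim in your final step, unnecessary.
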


The following result is implicit in the proof of~\cite[Theorem 2.11]{laver-decomposition}:

\begin{THM}[Laver~\cite{laver-decomposition}]\label{fbrd-scat-spec.thm.laver-2}
  Let $S$ be an additively indecomposable countable scattered chain. Then one of the following holds:
  \begin{itemize}
  \item
    there exist additively indecomposable countable scattered chains $R_i$, $i \in \omega$, such that
    $R_0 \hookrightarrow R_1 \hookrightarrow R_2 \hookrightarrow \ldots$ and either $S \equiv \sum_{i \in \omega} R_i$
    or $S \equiv \sum_{i \in \omega^*} R_i$; or
  \item
    there exists an $n \in \NN$ and additively indecomposable countable scattered chains $R_{ij}$, $i \in \omega$, $0 \le j \le n$,
    such that $R_{0j} \hookrightarrow R_{1j} \hookrightarrow R_{2j} \hookrightarrow \ldots$ for all $0 \le j \le n$
    and either $S \equiv \sum_{i \in \omega} (R_{i0} + R_{i1} + \ldots + R_{in})$
    or $S \equiv \sum_{i \in \omega^*} (R_{i0} + R_{i1} + \ldots + R_{in})$.
  \end{itemize}
\end{THM}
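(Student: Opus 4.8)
The plan is to argue by induction on the finite condensation rank $r_F(S) = r_H(S)$. Since order-reversal $S \mapsto S^*$ preserves scatteredness and additive indecomposability, sends $\hookrightarrow$-chains to $\hookrightarrow$-chains, and interchanges the index $\omega$ with $\omega^*$ in both displayed forms, I may fix one orientation and treat the two directions symmetrically. For the base case $r_F(S) \le 1$ the chain $S$ is bi-embeddable with one of $\mathbf 1$, $\omega$, $\omega^*$, $\zeta$; of these only $\omega$ and $\omega^*$ are additively indecomposable and infinite, and each already has the first form with $R_i \cong \mathbf 1$ (the finite case $S \equiv \mathbf 1$ is degenerate and set aside, so I assume $S$ infinite and $r_F(S) \ge 2$).

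For the inductive step I would pass to the finite condensation $\fin_S : S \to S'$, where $S' = S/\theta_S$ and $r_F(S') = r_F(S) - 1$. The first technical point is to transfer additive indecomposability along $\fin_S$. Given a cut $S' = P + Q$, lifting it to $S = \fin_S^{-1}(P) + \fin_S^{-1}(Q)$ and applying indecomposability of $S$ shows that $S$ embeds into one side, say $\fin_S^{-1}(P)$; since an embedding preserves the relation ``infinitely many points lie between,'' choosing one point per $\theta_S$-class and projecting produces an embedding $S' \hookrightarrow P$, so $S'$ is additively indecomposable. Granting this, the induction hypothesis puts $S'$ into one of the two prescribed forms, and I would reconstruct $S$ by \emph{fibre expansion}, replacing each $y \in S'$ by its fibre $\fin_S^{-1}(y)$ of order type $n$, $\omega$, $\omega^*$, or $\zeta$. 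Writing $S' = \sum_{i \in \omega} R'_i$ gives $S \equiv \sum_{i \in \omega} \tilde R_i$, where $\tilde R_i$ is the fibre expansion of $R'_i$, and by Theorem~\ref{fbrd-scat-spec.thm.laver-1} each $\tilde R_i$ is a finite sum of additively indecomposable chains. The point to watch is that a $\zeta$-fibre contributes the decomposable block $\omega^* + \omega$: it is precisely these $\zeta$-fibres (and the analogous splitting inside the blocks of the second form) that force the passage from a single tower to several aligned towers, and hence distinguish the first form from the second.

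It remains to normalise the doubly-indexed family $(R_{ij})$ so obtained, and this is where the main obstacle lies: one must simultaneously make the blocks $\hookrightarrow$-increasing and decide between one increasing tower $\sum_{i \in \omega} R_i$ and boundedly many column-aligned towers $\sum_{i \in \omega}(R_{i0} + \dots + R_{in})$. Both tasks rest on the deep fact, established by Laver in the course of proving \Fraisse's conjecture \cite{laver-fraisse-conj}, that the countable scattered chains are better-quasi-ordered by $\hookrightarrow$; a better-quasi-order has no infinite antichain and no infinite strictly descending sequence, so any $\omega$-indexed family admits an infinite $\hookrightarrow$-increasing subfamily. After extracting such a subfamily and regrouping consecutive blocks, I would argue that bi-embeddability with $S$ is preserved and that the block-widths $m_i + 1$ either stabilise at some value $n + 1$ — in which case column-by-column extraction of increasing subsequences yields the second form — or can be absorbed into a single increasing tower of indecomposables, yielding the first. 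I expect the genuinely hard part to be verifying that these regroupings do not destroy bi-embeddability with $S$ and that no unbounded-width configuration resists the collapse; this is exactly the point at which the better-quasi-ordering of scattered chains, rather than any elementary pigeonhole, is indispensable.
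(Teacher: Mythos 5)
The paper does not actually prove this statement: it is quoted as being implicit in the proof of Theorem 2.11 of Laver's decomposition paper \cite{laver-decomposition}, so there is no in-paper argument to compare yours against, and your sketch has to stand on its own. Parts of it do: the transfer of additive indecomposability to $S' = S/\theta_S$ is correct, since it suffices to test cuts of $S'$, and an embedding of $S$ into a union of $\theta_S$-classes induces an embedding of the condensations. But there are three genuine gaps. First, the induction on $r_F(S)$ does not terminate: the statement concerns arbitrary additively indecomposable countable scattered chains, and when $r_F(S)$ is a limit ordinal one has $r_F(S/\theta_S) = r_F(S)$, so passing to the finite condensation makes no progress there. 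Second, the fibre-expansion step conflates $\equiv$ with $=$. The induction hypothesis only gives $S' \equiv \sum_{i \in \omega} R'_i$, i.e.\ mutual embeddability with some abstract sum; the fibres $\fin_S^{-1}(y)$ are attached to points of $S'$ itself, and an embedding $S' \hookrightarrow \sum_i R'_i$ carries no information about fibres, so you cannot simply ``replace each $y$ by its fibre'' inside that sum. You would need an actual interval decomposition $S' = \sum_i I_i$ with the stated properties, which is not what the induction hypothesis delivers.

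Third, and most seriously, the entire content of the theorem --- that the indecomposable summands can be arranged into either a single $\hookrightarrow$-increasing tower or into boundedly many column-aligned increasing towers while preserving bi-embeddability with $S$ --- is exactly the step you defer with ``I would argue'' and ``I expect the genuinely hard part to be''. The wqo consequence of Laver's bqo theorem only yields an infinite $\hookrightarrow$-increasing subsequence of blocks; it says nothing about why the block widths (the number of indecomposable summands of each fibre-expanded $\tilde R_i$, which can grow with $i$) can be bounded by a fixed $n$, nor why discarding and regrouping blocks preserves $\equiv$ with $S$. Settling that requires exploiting the additive indecomposability of $S$ itself, e.g.\ that $S$ embeds into every tail $\sum_{j \ge i} \tilde R_j$, which your argument never invokes after the first paragraph. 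As written, the proposal reduces the theorem to an unproved claim that is essentially the theorem itself.
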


Let us now define a family of sets $\calA_n$, $n \in \omega$, of labelled ordered rooted trees and the scattered chains they encode.
Let $L_v = \{0, 1, \Boxed+, \omega, \omega^*\}$ be the set of vertex labels and let
$L_e = \omega \union \{\iota_n : n \in \omega\}$ be the set of edge labels.
Let $\calA_0 = \{ \bullet0, \bullet1 \}$ be the set whose elements are
single-vertex trees $\bullet0$ (a vertex labelled by $0$) and $\bullet1$ (a vertex labelled by $1$);
the chains these trees encode are $\|\bullet0\| = \0$ -- the empty chain, and $\|\bullet1\| = 1$ -- the trivial one-element chain;

Assume that $\calA_i$ have been defined for all $i < m$ and let us define three operations on trees as follows:
\begin{itemize}
\item
  for $n \in \NN$ and $\tau_0, \ldots, \tau_n \in \bigcup_{i < m} \calA_i$ let
  $\sigma$ be the tree whose root is labelled by $+$,
  edges going out of the root are labelled by $\iota_0, \ldots, \iota_{n}$ and are ordered that way, and each edge $\iota_k$
  leads to a subtree isomorphic to $\tau_k$, $0 \le k \le n$:
  \begin{center}
\begin{pgfpicture}
  \pgfsetxvec{\pgfpoint{\acadpgfunit}{0pt}}
  \pgfsetyvec{\pgfpoint{0pt}{\acadpgfunit}}
  \pgfsetlinewidth{\acadpgflinewidth}
  \pgftransformshift{\pgfpointxy{75.0}{0.0}}

  \begin{pgfscope}
    \pgfpathmoveto{\pgfpointxy{200.0}{250.0}}
    \pgfpathlineto{\pgfpointxy{275.0}{50.0}}
    \pgfusepath{stroke}
  \end{pgfscope}
  \begin{pgfscope}
    \pgfpathmoveto{\pgfpointxy{275.0}{50.0}}
    \pgfpathlineto{\pgfpointxy{125.0}{50.0}}
    \pgfusepath{stroke}
  \end{pgfscope}
  \begin{pgfscope}
    \pgfpathmoveto{\pgfpointxy{125.0}{50.0}}
    \pgfpathlineto{\pgfpointxy{200.0}{250.0}}
    \pgfusepath{stroke}
  \end{pgfscope}
  \begin{pgfscope}
    \pgfpathmoveto{\pgfpointxy{400.0}{250.0}}
    \pgfpathlineto{\pgfpointxy{325.0}{50.0}}
    \pgfusepath{stroke}
  \end{pgfscope}
  \begin{pgfscope}
    \pgfpathmoveto{\pgfpointxy{325.0}{50.0}}
    \pgfpathlineto{\pgfpointxy{475.0}{50.0}}
    \pgfusepath{stroke}
  \end{pgfscope}
  \begin{pgfscope}
    \pgfpathmoveto{\pgfpointxy{475.0}{50.0}}
    \pgfpathlineto{\pgfpointxy{400.0}{250.0}}
    \pgfusepath{stroke}
  \end{pgfscope}
  \begin{pgfscope}
    \pgfpathmoveto{\pgfpointxy{300.0}{350.0}}
    \pgfpathlineto{\pgfpointxy{200.0}{250.0}}
    \pgfusepath{stroke}
  \end{pgfscope}
  \begin{pgfscope}
    \pgfpathmoveto{\pgfpointxy{300.0}{350.0}}
    \pgfpathlineto{\pgfpointxy{400.0}{250.0}}
    \pgfusepath{stroke}
  \end{pgfscope}
  \begin{pgfscope}
    \pgfsetfillcolor{black}
    \pgfpathellipse{\pgfpointxy{300.0}{350.0}}{\pgfpointxy{8.0}{0.0}}{\pgfpointxy{0.0}{8.0}}
    \pgfusepath{fill,stroke}
  \end{pgfscope}
  \begin{pgfscope}
    \pgfsetfillcolor{black}
    \pgfpathellipse{\pgfpointxy{200.0}{250.0}}{\pgfpointxy{8.0}{0.0}}{\pgfpointxy{0.0}{8.0}}
    \pgfusepath{fill,stroke}
  \end{pgfscope}
  \begin{pgfscope}
    \pgfsetfillcolor{black}
    \pgfpathellipse{\pgfpointxy{400.0}{250.0}}{\pgfpointxy{8.0}{0.0}}{\pgfpointxy{0.0}{8.0}}
    \pgfusepath{fill,stroke}
  \end{pgfscope}
  \pgftext[bottom,at={\pgfpointxy{300.0}{370.0}}]{$+$}
  \pgftext[bottom,right,at={\pgfpointxy{242.0}{308.0}}]{$\iota_0$}
  \pgftext[bottom,left,at={\pgfpointxy{358.0}{308.0}}]{$\iota_{n}$}
  \pgftext[bottom,at={\pgfpointxy{200.0}{62.0}}]{$\tau_0$}
  \pgftext[bottom,at={\pgfpointxy{400.0}{62.0}}]{$\tau_{n}$}
  \pgftext[right,at={\pgfpointxy{138.0}{250.0}}]{$\sigma =$}
  \pgftext[at={\pgfpointxy{300.0}{250.0}}]{$\cdots$}
\end{pgfpicture}
  \end{center}
  let us denote this tree as $\sigma = \tau_0 + \ldots + \tau_n$;
  the chain it encodes is $\|\sigma\| = \|\tau_0\| + \ldots + \|\tau_{n}\|$;
\item
  for $\tau_k \in \bigcup_{i < m} \calA_i$, $k \in \omega$,
  let $\tau$, resp.\ $\tau^*$, be a tree whose root is labelled by $\omega$, resp.\ $\omega^*$,
  edges going out of the root are labelled by and ordered as $\omega$, resp.\ $\omega^*$,
  and each edge labelled by $k \in \omega$ leads to a subtree isomorphic to $\tau_k$, $k \in \omega$:
  \begin{center}
\begin{pgfpicture}
  \pgfsetxvec{\pgfpoint{\acadpgfunit}{0pt}}
  \pgfsetyvec{\pgfpoint{0pt}{\acadpgfunit}}
  \pgfsetlinewidth{\acadpgflinewidth}
  \pgftransformshift{\pgfpointxy{50.0}{25.0}}

  \begin{pgfscope}
    \pgfpathmoveto{\pgfpointxy{125.0}{250.0}}
    \pgfpathlineto{\pgfpointxy{200.0}{50.0}}
    \pgfusepath{stroke}
  \end{pgfscope}
  \begin{pgfscope}
    \pgfpathmoveto{\pgfpointxy{200.0}{50.0}}
    \pgfpathlineto{\pgfpointxy{50.0}{50.0}}
    \pgfusepath{stroke}
  \end{pgfscope}
  \begin{pgfscope}
    \pgfpathmoveto{\pgfpointxy{50.0}{50.0}}
    \pgfpathlineto{\pgfpointxy{125.0}{250.0}}
    \pgfusepath{stroke}
  \end{pgfscope}
  \begin{pgfscope}
    \pgfpathmoveto{\pgfpointxy{325.0}{250.0}}
    \pgfpathlineto{\pgfpointxy{250.0}{50.0}}
    \pgfusepath{stroke}
  \end{pgfscope}
  \begin{pgfscope}
    \pgfpathmoveto{\pgfpointxy{250.0}{50.0}}
    \pgfpathlineto{\pgfpointxy{400.0}{50.0}}
    \pgfusepath{stroke}
  \end{pgfscope}
  \begin{pgfscope}
    \pgfpathmoveto{\pgfpointxy{400.0}{50.0}}
    \pgfpathlineto{\pgfpointxy{325.0}{250.0}}
    \pgfusepath{stroke}
  \end{pgfscope}
  \begin{pgfscope}
    \pgfpathmoveto{\pgfpointxy{225.0}{350.0}}
    \pgfpathlineto{\pgfpointxy{125.0}{250.0}}
    \pgfusepath{stroke}
  \end{pgfscope}
  \begin{pgfscope}
    \pgfpathmoveto{\pgfpointxy{225.0}{350.0}}
    \pgfpathlineto{\pgfpointxy{325.0}{250.0}}
    \pgfusepath{stroke}
  \end{pgfscope}
  \begin{pgfscope}
    \pgfpathmoveto{\pgfpointxy{153.106}{364.461}}
    \pgfpatharcaxes{209.62}{330.38}{\pgfpointxy{82.7012}{0.0}}{\pgfpointxy{0.0}{82.7012}}
    \pgfusepath{stroke}
  \end{pgfscope}
  \begin{pgfscope}
    \pgfpathmoveto{\pgfpointxy{278.88}{350.178}}
    \pgfpatharcaxes{286.44}{330.38}{\pgfpointxy{30.7241}{0.0}}{\pgfpointxy{0.0}{30.7241}}
    \pgfusepath{stroke}
  \end{pgfscope}
  \begin{pgfscope}
    \pgfpathmoveto{\pgfpointxy{296.894}{364.461}}
    \pgfpatharcaxes{150.38}{194.319}{\pgfpointxy{30.7241}{0.0}}{\pgfpointxy{0.0}{30.7241}}
    \pgfusepath{stroke}
  \end{pgfscope}
  \begin{pgfscope}
    \pgfpathmoveto{\pgfpointxy{750.0}{250.0}}
    \pgfpathlineto{\pgfpointxy{825.0}{50.0}}
    \pgfusepath{stroke}
  \end{pgfscope}
  \begin{pgfscope}
    \pgfpathmoveto{\pgfpointxy{825.0}{50.0}}
    \pgfpathlineto{\pgfpointxy{675.0}{50.0}}
    \pgfusepath{stroke}
  \end{pgfscope}
  \begin{pgfscope}
    \pgfpathmoveto{\pgfpointxy{675.0}{50.0}}
    \pgfpathlineto{\pgfpointxy{750.0}{250.0}}
    \pgfusepath{stroke}
  \end{pgfscope}
  \begin{pgfscope}
    \pgfpathmoveto{\pgfpointxy{950.0}{250.0}}
    \pgfpathlineto{\pgfpointxy{875.0}{50.0}}
    \pgfusepath{stroke}
  \end{pgfscope}
  \begin{pgfscope}
    \pgfpathmoveto{\pgfpointxy{875.0}{50.0}}
    \pgfpathlineto{\pgfpointxy{1025.0}{50.0}}
    \pgfusepath{stroke}
  \end{pgfscope}
  \begin{pgfscope}
    \pgfpathmoveto{\pgfpointxy{1025.0}{50.0}}
    \pgfpathlineto{\pgfpointxy{950.0}{250.0}}
    \pgfusepath{stroke}
  \end{pgfscope}
  \begin{pgfscope}
    \pgfpathmoveto{\pgfpointxy{850.0}{350.0}}
    \pgfpathlineto{\pgfpointxy{750.0}{250.0}}
    \pgfusepath{stroke}
  \end{pgfscope}
  \begin{pgfscope}
    \pgfpathmoveto{\pgfpointxy{850.0}{350.0}}
    \pgfpathlineto{\pgfpointxy{950.0}{250.0}}
    \pgfusepath{stroke}
  \end{pgfscope}
  \begin{pgfscope}
    \pgfpathmoveto{\pgfpointxy{778.106}{364.461}}
    \pgfpatharcaxes{209.62}{330.38}{\pgfpointxy{82.7012}{0.0}}{\pgfpointxy{0.0}{82.7012}}
    \pgfusepath{stroke}
  \end{pgfscope}
  \begin{pgfscope}
    \pgfpathmoveto{\pgfpointxy{778.106}{364.461}}
    \pgfpatharcaxes{209.62}{253.56}{\pgfpointxy{30.7241}{0.0}}{\pgfpointxy{0.0}{30.7241}}
    \pgfusepath{stroke}
  \end{pgfscope}
  \begin{pgfscope}
    \pgfpathmoveto{\pgfpointxy{781.166}{341.677}}
    \pgfpatharcaxes{-14.3193}{29.6201}{\pgfpointxy{30.7241}{0.0}}{\pgfpointxy{0.0}{30.7241}}
    \pgfusepath{stroke}
  \end{pgfscope}
  \begin{pgfscope}
    \pgfsetfillcolor{black}
    \pgfpathellipse{\pgfpointxy{225.0}{350.0}}{\pgfpointxy{8.0}{0.0}}{\pgfpointxy{0.0}{8.0}}
    \pgfusepath{fill,stroke}
  \end{pgfscope}
  \begin{pgfscope}
    \pgfsetfillcolor{black}
    \pgfpathellipse{\pgfpointxy{850.0}{350.0}}{\pgfpointxy{8.0}{0.0}}{\pgfpointxy{0.0}{8.0}}
    \pgfusepath{fill,stroke}
  \end{pgfscope}
  \pgftext[bottom,at={\pgfpointxy{225.0}{370.0}}]{$\omega$}
  \pgftext[bottom,at={\pgfpointxy{125.0}{62.0}}]{$\tau_0$}
  \pgftext[bottom,at={\pgfpointxy{325.0}{62.0}}]{$\tau_k$}
  \pgftext[right,at={\pgfpointxy{88.0}{250.0}}]{$\tau =$}
  \pgftext[at={\pgfpointxy{225.0}{275.0}}]{$\cdots$}
  \pgftext[at={\pgfpointxy{375.0}{275.0}}]{$\cdots$}
  \pgftext[bottom,right,at={\pgfpointxy{142.0}{297.461}}]{$0$}
  \pgftext[bottom,left,at={\pgfpointxy{308.0}{297.461}}]{$k$}
  \pgftext[bottom,at={\pgfpointxy{850.0}{370.0}}]{$\omega^*$}
  \pgftext[bottom,at={\pgfpointxy{750.0}{62.0}}]{$\tau_k$}
  \pgftext[bottom,at={\pgfpointxy{950.0}{62.0}}]{$\tau_0$}
  \pgftext[right,at={\pgfpointxy{638.0}{250.0}}]{$\tau^* =$}
  \pgftext[at={\pgfpointxy{850.0}{275.0}}]{$\cdots$}
  \pgftext[at={\pgfpointxy{700.0}{275.0}}]{$\cdots$}
  \pgftext[bottom,right,at={\pgfpointxy{767.0}{297.461}}]{$k$}
  \pgftext[bottom,left,at={\pgfpointxy{933.0}{297.461}}]{$0$}
\end{pgfpicture}
  \end{center}
  let us denote the tree as $\tau = \sum_{k \in \omega} \tau_k$, resp.\ $\tau^* = \sum_{k \in \omega^*} \tau_k$;
  the chain it encodes is $\|\tau\| = \sum_{k \in \omega} \|\tau_k\|$, resp.\ $\|\tau^*\| = \sum_{k \in \omega^*} \|\tau_k\|$.
\end{itemize}
Then put
\begin{align*}
  \calA_m =
  &\textstyle \Big\{\sum_{k \in \omega} \tau_k : \tau_k \in \bigcup_{i < m} \calA_i \text{ and } \|\tau_0\| \hookrightarrow \|\tau_1\| \hookrightarrow \ldots\Big\}\\
  &\textstyle \cup \Big\{\sum_{k \in \omega^*} \tau_k : \tau_k \in \bigcup_{i < m} \calA_i \text{ and } \|\tau_0\| \hookrightarrow \|\tau_1\| \hookrightarrow \ldots\Big\}\\
  &\textstyle \cup
    \begin{array}[t]{l@{\,}l}
      \Big\{\sum_{k \in \omega} (\tau_{k0} + \ldots +  \tau_{kn}) : & n \in \NN, \tau_{kj} \in \bigcup_{i < m} \calA_i \text{ and } \\
                                                                    & \|\tau_{0j}\| \hookrightarrow \|\tau_{1j}\| \hookrightarrow \ldots \text{ for all } j\Big\}
    \end{array}\\
  &\textstyle \cup
    \begin{array}[t]{l@{\,}l}
      \Big\{\sum_{k \in \omega^*} (\tau_{k0} + \ldots +  \tau_{kn}) : & n \in \NN, \tau_{kj} \in \bigcup_{i < m} \calA_i \text{ and } \\
                                                                      & \|\tau_{0j}\| \hookrightarrow \|\tau_{1j}\| \hookrightarrow \ldots \text{ for all } j\Big\}.
    \end{array}
\end{align*}
and let 
$$
  \calA = \bigcup_{m \in \omega} \calA_m.
$$
Furthermore, let $\calS$ be the set of trees defined as ``finite sums of trees from~$\calA$'':
$$
  \calS = \calA \union \{\tau_0 + \ldots + \tau_n : n \in \NN \text{ and } \tau_0, \ldots, \tau_n \in \calA\}.
$$

\begin{LEM}\label{fbrd-scat-spec.lem.AI}
  A chain $S$ is an additively indecomposable countable scattered chain of finite Hausdorff rank if and only if there is
  a tree $\tau \in \calA$ such that $S \equiv \|\tau\|$.
\end{LEM}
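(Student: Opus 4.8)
The statement is a biconditional, and the tree operations defining $\calA$ are built to mirror exactly the four shapes in Laver's Theorem~\ref{fbrd-scat-spec.thm.laver-2}; so the plan is to prove ``$\Leftarrow$'' by induction on the level $m$ with $\tau \in \calA_m$, and ``$\Rightarrow$'' by induction on the (finite) Hausdorff rank $r_H(S)$, using Laver's theorem to peel off one layer and the encoding operations to reassemble a tree.

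For the direction $\Leftarrow$ I would show, by induction on $m$, that every $\tau \in \calA_m$ has $\|\tau\|$ countable, scattered, additively indecomposable, and of Hausdorff rank at most $m$. The base case $m = 0$ is immediate since $\|\bullet 0\| = \0$ and $\|\bullet 1\| = 1$ are additively indecomposable. In the inductive step $\|\tau\|$ is one of the four sums $\sum_{k \in \omega} \|\tau_k\|$, $\sum_{k \in \omega^*} \|\tau_k\|$, $\sum_{k \in \omega}(\|\tau_{k0}\| + \dots + \|\tau_{kn}\|)$ or its $\omega^*$-analogue, with all the inner trees in $\bigcup_{i < m} \calA_i$. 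Countability and scatteredness are routine (a sum of scattered chains over the scattered index $\omega$ or $\omega^*$ is scattered), and the rank bound follows from the inductive bound $r_H(\|\tau_k\|) \le m-1$ (finite sums of such chains obeying the same bound) together with the fact that padding an $\omega$- or $\omega^*$-sum of chains from $\calH_{m-1}$ with copies of $\0$ exhibits it as a $\ZZ$-sum, hence a member of $\calH_m$. The only substantive point is additive indecomposability, for which I would isolate the following \emph{key lemma}: if $P_0 \hookrightarrow P_1 \hookrightarrow \dots$ then $\sum_{k \in \omega} P_k$ (and symmetrically $\sum_{k \in \omega^*} P_k$) is additively indecomposable. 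Given an embedding of $C = \sum_{k \in \omega} P_k$ into $A + B$, the preimages of $A$ and of $B$ form an initial and a final segment of $C$; if the initial one is cofinal it is all of $C$ and $C \hookrightarrow A$, while otherwise it is bounded inside $\sum_{k \le k_0} P_k$ for some $k_0$, so that $C \equiv \sum_{k > k_0} P_k \hookrightarrow B$, the equivalence $\sum_{k > k_0} P_k \equiv \sum_{k \in \omega} P_k$ being the telescoping that uses $P_k \hookrightarrow P_{k+1}$. Applying this with $P_k = \|\tau_k\|$, resp.\ $P_k = \|\tau_{k0}\| + \dots + \|\tau_{kn}\|$ (which is again $\hookrightarrow$-increasing in $k$ because each column is), settles all four cases.

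For the direction $\Rightarrow$ I would induct on $r_H(S)$. If $r_H(S) = 0$ then $S \in \calH_0 = \{\0, 1\}$, so $S$ is encoded by $\calA_0$. If $r_H(S) = m \ge 1$, Laver's Theorem~\ref{fbrd-scat-spec.thm.laver-2} presents $S$ (up to $\equiv$) as one of the four shapes with additively indecomposable countable scattered pieces $R_i$ (resp.\ $R_{ij}$) that are $\hookrightarrow$-increasing. Assuming the rank drop discussed below, each piece has Hausdorff rank $< m$, so the induction hypothesis yields trees $\rho_i \in \bigcup_{j < m} \calA_j$ with $\|\rho_i\| \equiv R_i$ (resp.\ $\rho_{ij}$). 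Since $\|\rho_i\| \equiv R_i \hookrightarrow R_{i+1} \equiv \|\rho_{i+1}\|$, the embeddability conditions required for $\calA$-membership hold, so assembling the $\rho_i$ by the matching tree operation produces a tree $\tau \in \calA_m$ with $\|\tau\| = \sum \|\rho_i\| \equiv \sum R_i \equiv S$, as desired.

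The hard part is the rank drop that makes this recursion well founded: in Laver's decomposition each piece must have Hausdorff rank strictly below $r_H(S)$. Since $r_H$ is monotone under $\hookrightarrow$, a piece $R$ of a shape $S \equiv \sum_{i \in \omega} R_i$ satisfies $r_H(R) \le r_H(S)$, and I must exclude equality. Suppose $r_H(R_{i_0}) = r_H(S) =: \beta$. As the only finite additively indecomposable chains are $\0$ and $1$ (of rank $0 < \beta$), $R_{i_0}$ is infinite. Telescoping gives $\sum_{i \ge i_0} R_i \equiv S$, and since $R_{i_0} \hookrightarrow R_i$ for $i \ge i_0$ we get $R_{i_0} \cdot \omega = \sum_{i \ge i_0} R_{i_0} \hookrightarrow \sum_{i \ge i_0} R_i \equiv S$, whence $r_H(R_{i_0} \cdot \omega) \le \beta$. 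On the other hand, for an infinite additively indecomposable chain $R$ one has $r_H(R \cdot \omega) = r_H(R) + 1$, so $r_H(R_{i_0} \cdot \omega) = \beta + 1 > \beta$, a contradiction; the $\omega^*$-shapes and the ``$+ \dots +$'' shapes are handled identically (for the latter $R_{ij}$ is a summand of $P_i$, so $R_{i_0 j} \cdot \omega \hookrightarrow \sum_{i \ge i_0} P_i \equiv S$). I expect the genuine technical work to be concentrated in the rank identity $r_H(R \cdot \omega) = r_H(R) + 1$ for infinite additively indecomposable $R$: passing to the finite condensation rank $r_F = r_H$~\cite{rosenstein}, one checks that collapsing $R \cdot \omega$ by $r_F(R)$ steps shrinks each of the $\omega$ copies of $R$ to a single point yet leaves infinitely many such points, the delicate issue being to control the merging of adjacent copies at block boundaries (here additive indecomposability of $R$ is what prevents a finite interval from swallowing a whole copy together with its neighbour). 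These are exactly the kind of Hausdorff-rank computations available from~\cite{rosenstein}.
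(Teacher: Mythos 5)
Your proposal is correct and follows essentially the same route as the paper. For the direction $(\Leftarrow)$ your key lemma --- that an $\omega$-sum (or $\omega^*$-sum) of an $\hookrightarrow$-increasing sequence of chains is additively indecomposable, proved by observing that the preimage of $B$ is a final segment containing some tail $\sum_{k>k_0}P_k$, which by telescoping is $\equiv$ the whole sum --- is exactly the paper's argument (the paper likewise treats only the $\sum_{k\in\omega}(\tau_{k0}+\dots+\tau_{kn})$ case in detail and declares the others analogous). For the direction $(\Rightarrow)$ the paper writes only that it ``follows straightforwardly from Theorem~\ref{fbrd-scat-spec.thm.laver-2} by induction on the Hausdorff rank,'' implicitly relying on the fact that Laver's decomposition is itself constructed by recursion on rank, so the summands automatically have strictly smaller rank. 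You instead take the statement of Theorem~\ref{fbrd-scat-spec.thm.laver-2} as a black box and supply the rank drop yourself, reducing it to the identity $r_H(R\cdot\omega)=r_H(R)+1$ for infinite additively indecomposable $R$; this is a legitimate and arguably more self-contained route, and you correctly identify that additive indecomposability is essential there (e.g.\ $R=\omega+5$ satisfies $r_H(R\cdot\omega)=r_H(R)$, but is not additively indecomposable). That identity is the one ingredient you leave unproved; since the paper offers no detail at all for this direction, this is not a gap relative to the paper, but if you write the argument out you should either prove the identity (only the inequality $r_H(R\cdot\omega)>r_H(R)$ is actually needed) or instead quote the rank-decreasing form of Laver's decomposition directly, as the author does.
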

\begin{proof}
  $(\Rightarrow)$
  This direction follows straightforwardly from Theorem~\ref{fbrd-scat-spec.thm.laver-2}
  by induction on the Hausdorff rank of $S$.
  
  $(\Leftarrow)$ Take any $\tau \in \calA$.
  By construction $\|\tau\|$ is a countable scattered chain of finite Hausdorff rank,
  and $\equiv$ clearly preserves these properties. Let us show by induction on $m$ that
  $\|\tau\|$ is additively indecomposable for every $\tau \in \calA_m$, $m \in \omega$.
  The case $m = 0$ is trivial. Assume that the claim is true for all $j < m$ and take any $\tau \in \calA_m$.
  Let us only consider the possibility where
  $\tau = \sum_{k \in \omega} (\tau_{k0} + \ldots +  \tau_{kn})$ for some $n \in \NN$ and trees
  $\tau_{kj} \in \bigcup_{i < m} \calA_i$, $\0 \le j \le n$, $k \in \omega$, such that
  $$
    \|\tau_{0j}\| \hookrightarrow \|\tau_{1j}\| \hookrightarrow \|\tau_{2j}\| \hookrightarrow \ldots \text{\quad for all\quad} 0 \le j \le n.
  $$
  (The remaining cases follow by similar arguments.)
  For notational convenience let $\sigma_k = \tau_{k0} + \ldots +  \tau_{kn}$, $k \in \omega$.
  It is then obvious that
  \begin{equation}\label{fbrd-scat-spec.eq.1}
    \|\sigma_{0}\| \hookrightarrow \|\sigma_{1}\| \hookrightarrow \|\sigma_{2}\| \hookrightarrow \ldots.
  \end{equation}
  To show that $\|\tau\|$ is additively indecomposable, let $f : \|\tau\| \hookrightarrow A + B$ be an embedding and assume
  that $\|\tau\| \not\hookrightarrow A$. Then there is an $x \in \|\tau\|$ such that $f(x) \in B$.
  Take $i \in \NN$ so that $x \in \|\sigma_i\|$. Then
  \begin{equation}\label{fbrd-scat-spec.eq.2}
    \|\sigma_{i+1}\| \hookrightarrow B.
  \end{equation}
  Therefore,
  $$
    \|\tau\| = \sum_{k \in \omega} \|\sigma_k\| \hookrightarrow \sum_{k \in \omega} \|\sigma_{i + 1 + k}\| \hookrightarrow B
  $$
  where the existence of the first embedding follows from~\eqref{fbrd-scat-spec.eq.1}, while
  the existence of the second embedding follows from~\eqref{fbrd-scat-spec.eq.2} and the fact that $\|\tau\| \hookrightarrow A + B$.
\end{proof}

\begin{LEM}
  A chain $S$ is a countable scattered chain of finite Hausdorff rank if and only if there is a tree $\sigma \in \calS$ such that
  $S \equiv \|\sigma\|$.
\end{LEM}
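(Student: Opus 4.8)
The plan is to read this lemma off from the additively indecomposable case (Lemma~\ref{fbrd-scat-spec.lem.AI}) together with Laver's decomposition theorem (Theorem~\ref{fbrd-scat-spec.thm.laver-1}), after recording a few closure properties of the class of countable scattered chains of finite Hausdorff rank. First I would prove that each $\calH_\alpha$ is closed under passing to subchains, by an easy induction on $\alpha$: the blocks of a $\ZZ$-indexed representation restrict to subchains of strictly lower rank, and the padded union lands back in $\calH_\alpha$. Since an embedding of chains is an isomorphism onto its image, this yields the monotonicity $A \hookrightarrow B \Rightarrow r_H(A) \le r_H(B)$, and hence $A \equiv B \Rightarrow r_H(A) = r_H(B)$; countability and scatteredness are obviously preserved by $\hookrightarrow$ as well, since a copy of $\QQ$ inside $A$ would embed into $B$. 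I would also note two further facts: a finite sum $A_0 + \ldots + A_n$ of chains of finite Hausdorff rank again has finite Hausdorff rank (pad it to a $\ZZ$-indexed sum with empty blocks to land in $\calH_{\alpha+1}$, where $\alpha = \max_j r_H(A_j)$), and $\equiv$ is a congruence for finite sums, because from embeddings $A_j \hookrightarrow B_j$ one builds $A_0 + \ldots + A_n \hookrightarrow B_0 + \ldots + B_n$ blockwise.

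Given these, the two implications will be short. For $(\Leftarrow)$ I would take $\sigma \in \calS$: if $\sigma \in \calA$ then $\|\sigma\|$ is a countable scattered chain of finite Hausdorff rank by Lemma~\ref{fbrd-scat-spec.lem.AI}, and otherwise $\sigma = \tau_0 + \ldots + \tau_n$ with each $\tau_j \in \calA$, so $\|\sigma\| = \|\tau_0\| + \ldots + \|\tau_n\|$ is a finite sum of such chains and hence again one of them; in either case $S \equiv \|\sigma\|$ inherits all three properties by their preservation under $\equiv$. For $(\Rightarrow)$ I would write $S = C_0 + \ldots + C_n$ as a finite sum of additively indecomposable scattered chains using Theorem~\ref{fbrd-scat-spec.thm.laver-1}. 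Each $C_j$ embeds into $S$, so it is countable, scattered and of finite Hausdorff rank (here the monotonicity $r_H(C_j) \le r_H(S)$ is used), whence Lemma~\ref{fbrd-scat-spec.lem.AI} furnishes $\tau_j \in \calA$ with $C_j \equiv \|\tau_j\|$. Setting $\sigma = \tau_0 \in \calA \subseteq \calS$ when $n = 0$, and $\sigma = \tau_0 + \ldots + \tau_n \in \calS$ when $n \ge 1$, the congruence property gives $S = C_0 + \ldots + C_n \equiv \|\tau_0\| + \ldots + \|\tau_n\| = \|\sigma\|$, as required.

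I do not expect a real obstacle here: all the structural content has already been isolated in Lemma~\ref{fbrd-scat-spec.lem.AI} and in Laver's theorem, and what remains is essentially bookkeeping. The one point that genuinely needs care is the package of closure statements, and especially the rank monotonicity $A \hookrightarrow B \Rightarrow r_H(A) \le r_H(B)$: it is precisely this that guarantees that decomposing $S$ into its additively indecomposable summands $C_j$, and then replacing each $C_j$ by an $\equiv$-equivalent chain of the form $\|\tau_j\|$, never leaves the class of countable scattered chains of finite Hausdorff rank.
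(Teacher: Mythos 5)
Your proposal is correct and follows exactly the route the paper takes: the paper's entire proof is ``Directly from Lemma~\ref{fbrd-scat-spec.lem.AI} and Theorem~\ref{fbrd-scat-spec.thm.laver-1}.'' You have simply made explicit the routine closure facts (rank monotonicity under embeddings, finite sums preserving finite rank, $\equiv$ being a congruence for finite sums) that the paper leaves implicit.
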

\begin{proof}
  Directly from Lemma~\ref{fbrd-scat-spec.lem.AI} and Theorem~\ref{fbrd-scat-spec.thm.laver-1}.
\end{proof}

Let $V \subseteq \omega$ be an infinite subset of $\omega$, let $U = V \union \{\iota_n : n \in \omega\}$ and let
$\tau \in \calS$ be arbitrary. Recall that $\restr \tau U$ denotes the subtree of $\tau$ induced by all the branches whose
edge labels belong to $U$. The particular structure of $U$ ensures that the infinite sums in $\tau$ are restricted
so that $\sum_{k \in \omega}$ becomes $\sum_{k \in V}$, and similarly for $\sum_{k \in \omega^*}$.
Thus, we define $\|\restr \tau U\|$ as follows:
\begin{itemize}
\item
  if $\tau = \tau_0 + \ldots + \tau_n$ then $\|\restr\tau U\| = \|\restr{\tau_0}{U}\| + \ldots + \|\restr{\tau_n}{U}\|$;
\item
  if $\tau = \sum_{k \in \omega} \tau_k$ then $\|\restr\tau U\| = \sum_{k \in V} \|\restr{\tau_k}{U}\|$, and
  analogously in case $\tau = \sum_{k \in \omega^*} \tau_k$.
\end{itemize}
Consequently,
\begin{itemize}
\item
  if $\tau = \sum_{k \in \omega} (\tau_{k0} + \ldots +  \tau_{kn})$ then
  $\|\restr\tau U\| = \sum_{k \in V} (\|\restr{\tau_{k0}}{U}\| + \ldots +  \|\restr{\tau_{kn}}{U}\|)$,
  and analogously in case $\tau = \sum_{k \in \omega^*} (\tau_{k0} + \ldots +  \tau_{kn})$.
\end{itemize}
It is obvious that $\|\restr\tau U\| \hookrightarrow \|\tau\|$.

\begin{LEM}\label{fbrd-scat.lem.EQ}
  Let $V \subseteq \omega$ be an infinite subset of $\omega$, let $U = V \union \{\iota_n : n \in \omega\}$ and let
  $\tau \in \calS$ be arbitrary. Then $\|\tau\| \equiv \|\restr\tau U\|$.
\end{LEM}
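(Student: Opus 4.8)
The relation $\equiv$ is mutual embeddability, and the embedding $\|\restr\tau U\| \hookrightarrow \|\tau\|$ has already been observed, so the plan is to establish only the reverse embedding $\|\tau\| \hookrightarrow \|\restr\tau U\|$. I would prove this by induction on the least $m$ with $\tau \in \calA_m$, and then dispose of the general $\tau \in \calS$ (a finite sum of $\calA$-trees) at the very end. Throughout, write $V = \{v_0 < v_1 < v_2 < \ldots\}$ and record the single arithmetic fact that drives everything: since $V$ is an infinite subset of $\omega$, we have $v_k \ge k$ for every $k \in \omega$.

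The base case $m = 0$ is immediate, since then $\|\tau\| \in \{\0, 1\}$ contains no infinite sum and $\|\restr\tau U\| = \|\tau\|$. For the inductive step, consider first $\tau = \sum_{k \in \omega} \tau_k$, where the defining condition gives $\|\tau_0\| \hookrightarrow \|\tau_1\| \hookrightarrow \ldots$. For each $k$, the monotonicity of this sequence together with $k \le v_k$ yields $\|\tau_k\| \hookrightarrow \|\tau_{v_k}\|$, and the induction hypothesis applied to the lower-rank subtree $\tau_{v_k}$ gives $\|\tau_{v_k}\| \hookrightarrow \|\restr{\tau_{v_k}}{U}\|$; composing, $\|\tau_k\| \hookrightarrow \|\restr{\tau_{v_k}}{U}\|$. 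Now the order-isomorphism $k \mapsto v_k$ identifies the index chain $V$ with $\omega$, so $\|\restr\tau U\| = \sum_{k \in V} \|\restr{\tau_k}{U}\|$ is order-isomorphic to $\sum_{k \in \omega} \|\restr{\tau_{v_k}}{U}\|$; since indexed sums of chains are monotone coordinatewise, the family of embeddings $\|\tau_k\| \hookrightarrow \|\restr{\tau_{v_k}}{U}\|$ assembles into $\|\tau\| = \sum_{k \in \omega}\|\tau_k\| \hookrightarrow \sum_{k\in\omega}\|\restr{\tau_{v_k}}{U}\| \cong \|\restr\tau U\|$.

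The remaining shapes of $\tau$ are handled by the same recipe with only bookkeeping changes. For $\tau = \sum_{k \in \omega^*} \tau_k$ both sums have order type $\omega^*$, and one maps the $k$-th summand counted from the right end of $\|\tau\|$ to the $k$-th summand from the right end of $\|\restr\tau U\|$, again using $v_k \ge k$ and the induction hypothesis. For the variants whose blocks are finite sums $\sigma_k = \tau_{k0} + \ldots + \tau_{kn}$, one first applies coordinatewise monotonicity in each fixed coordinate $j$ (using $\|\tau_{0j}\| \hookrightarrow \|\tau_{1j}\| \hookrightarrow \ldots$ and $k \le v_k$) together with the induction hypothesis on the subtrees $\tau_{v_k j}$ to obtain, as a finite sum of embeddings, $\|\sigma_k\| \hookrightarrow \|\restr{\tau_{v_k0}}{U}\| + \ldots + \|\restr{\tau_{v_kn}}{U}\|$, and then assembles over the infinite ($\omega$- or $\omega^*$-) index exactly as above. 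Finally, for a general $\tau = \tau_0 + \ldots + \tau_n \in \calS$ with each $\tau_i \in \calA$, the already-proved $\calA$-case gives $\|\tau_i\| \hookrightarrow \|\restr{\tau_i}{U}\|$ for every $i$, and a finite sum of these embeddings yields $\|\tau\| \hookrightarrow \|\restr\tau U\|$, completing the argument.

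I expect the only genuine content to be the $\omega$-sum subcase of the inductive step, whose crux is the interplay between the $\hookrightarrow$-increasing condition on the summand chains and the domination $v_k \ge k$ furnished by $V$ being infinite; once the matching ``$k$-th summand to $v_k$-th summand'' is set up and the standard coordinatewise monotonicity of indexed sums is invoked, the reversed-order and finite-block variants are purely routine.
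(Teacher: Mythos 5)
Your proposal is correct and follows essentially the same route as the paper's proof: induction on $m$ with $\tau\in\calA_m$, the key inequality $v_k\ge k$ for $V=\{v_0<v_1<\ldots\}$, the composite embedding $\|\tau_k\|\hookrightarrow\|\tau_{v_k}\|\hookrightarrow\|\restr{\tau_{v_k}}{U}\|$ via the increasing chain and the induction hypothesis, and assembly over the indexed sum, with the finite-sum case of $\calS$ dispatched at the end. The only cosmetic difference is that the paper works out the block case $\sum_{k\in\omega}(\tau_{k0}+\ldots+\tau_{kn})$ in detail while you detail the plain $\omega$-sum, but the argument is the same.
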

\begin{proof}
  In view of the last remark, it suffices to show that $\|\tau\| \hookrightarrow \|\restr\tau U\|$ for all $\tau \in \calS$.

  Let us, first, prove the statement in case $\tau \in \calA = \bigcup_{m \in \omega} \calA_m$. The proof is by induction on~$m$.
  Assume that the statement is true for all $\tau \in \bigcup_{i < m} \calA_i$ and take any $\tau \in \calA_m$.
  Let us only consider the possibility where
  $\tau = \sum_{k \in \omega} (\tau_{k0} + \ldots +  \tau_{kn})$ for some $n \in \NN$ and trees
  $\tau_{kj} \in \bigcup_{i < m} \calA_i$, $\0 \le j \le n$, $k \in \omega$, such that
  $$
    \|\tau_{0j}\| \hookrightarrow \|\tau_{1j}\| \hookrightarrow \|\tau_{2j}\| \hookrightarrow \ldots \text{\quad for all\quad} 0 \le j \le n.
  $$
  (The remaining cases follow by similar arguments.)
  For notational convenience let $\sigma_k = \tau_{k0} + \ldots +  \tau_{kn}$, $k \in \omega$.
  It is then obvious that
  \begin{equation}\label{fbrd-scat-spec.eq.3}
    \|\sigma_{0}\| \hookrightarrow \|\sigma_{1}\| \hookrightarrow \|\sigma_{2}\| \hookrightarrow \ldots.
  \end{equation}
  Moreover, by the induction hypothesis,
  \begin{equation}\label{fbrd-scat-spec.eq.4}
    \|\sigma_{k}\| \hookrightarrow \|\restr{\sigma_{k}}{U}\|, \text{\quad for all } k \in \omega.
  \end{equation}
  Let $V = \{v_0 < v_1 < v_2 < \ldots \}$. Then $v_i \ge i$ for all $i \in \omega$, so \eqref{fbrd-scat-spec.eq.3}
  and \eqref{fbrd-scat-spec.eq.4} yield
  $$
    \|\sigma_k\| \hookrightarrow \|\sigma_{v_k}\| \hookrightarrow \|\restr{\sigma_{v_k}}{U}\|, \text{\quad for all } k \in \omega.
  $$
  Therefore,
  $$
    \|\tau\| = \sum_{k \in \omega} \|\sigma_k\| \hookrightarrow \sum_{k \in \omega} \|\restr{\sigma_{v_k}}{U}\|
    = \sum_{\ell \in V} \|\restr{\sigma_{\ell}}{U}\| = \|\restr\tau U\|.
  $$
  Finally, if $\tau \in \calS \setminus \calA$ we have that $\tau = \tau_0 + \ldots + \tau_n$
  for some $n \in \NN$ and $\tau_0, \ldots, \tau_n \in \calA$, and the claim follows immediately from the
  above discussion.
\end{proof}

A tree $\sigma \in \calS$ has \emph{bounded finite sums} if there is an
integer $w \in \NN$ such that $L_e(\sigma) \subseteq \omega \union \{\iota_0, \ldots, \iota_w\}$.
In other words, $\sigma$ is a tree whose finite sums have at most $w + 1$ summands.
A countable scattered chain $S$ of finite Hausdorff rank has \emph{bounded finite sums} if there is a tree $\sigma \in \calS$
with bounded finite sums such that $S \equiv \|\sigma\|$.

We are now going to prove that countable scattered chains of finite Hausdorff rank with bounded finite sums
have finite big Ramsey spectra.
The tool we rely on is the following straightforward consequence of Ramsey's theorem
(for a proof see e.g.~\cite{masul-sobot}):

\begin{THM}\label{fbrd-scat-all.thm.inf-prod-ramsey}
  For every choice of integers $s \ge 1$ and $m_0$, $m_1$, \ldots, $m_{s-1} \ge 1$ there is an integer
  $D = D(s; m_0, m_1, \ldots, m_{s-1})$ such that for every $k \ge 2$ and every coloring
  $
    \chi : \binom\omega{m_0} \times \ldots \times \binom\omega{m_{s-1}} \to k
  $
  there is an infinite $U \subseteq \omega$ satisfying
  $
    \left|\chi\left(\binom U{m_0} \times \ldots \times \binom U{m_{s-1}}\right)\right| \le D.
  $
\end{THM}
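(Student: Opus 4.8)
The plan is to reduce the statement to finitely many applications of the ordinary infinite Ramsey theorem (Theorem~\ref{fbrd-scat-all.thm.RAMSEY}) by encoding each tuple $(X_0, \ldots, X_{s-1}) \in \binom\omega{m_0} \times \ldots \times \binom\omega{m_{s-1}}$ together with the combinatorial data recording how its coordinates interleave inside their union. The number $D$ will turn out to be the number of such ``interleaving patterns'', which manifestly depends only on $s$ and $m_0, \ldots, m_{s-1}$ and not on $k$ or $\chi$.

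First I would make the notion of pattern precise. For a tuple $\bar X = (X_0, \ldots, X_{s-1})$ put $W = X_0 \union \ldots \union X_{s-1}$ and $p = |W| \le N$, where $N = m_0 + \ldots + m_{s-1}$; writing $W = \{w_1 < \ldots < w_p\}$, let the \emph{type} of $\bar X$ be the function $t \colon \{1, \ldots, p\} \to \calP(\{0, \ldots, s-1\})$ defined by $t(j) = \{ i : w_j \in X_i\}$. A type is thus a sequence of nonempty subsets of $\{0, \ldots, s-1\}$ in which the index $i$ occurs exactly $m_i$ times; there are only finitely many of these, and I would set $D$ equal to their number. The key point is that a type together with a $p$-element set $W \subseteq \omega$ recovers $\bar X$ uniquely via $X_i = \{ w_j : i \in t(j)\}$.

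Next I would linearize. Enumerate the finitely many types as $t_0, \ldots, t_{D-1}$, with $t_\ell$ having union-size $p_\ell$, and for each $\ell$ define an auxiliary coloring $\chi_\ell \colon \binom\omega{p_\ell} \to k$ by letting $\chi_\ell(W)$ be the value of $\chi$ on the unique tuple of type $t_\ell$ whose union is $W$. Starting from $U = \omega$ I would apply Theorem~\ref{fbrd-scat-all.thm.RAMSEY} once for each $\ell$, at each stage passing to an infinite subset on which $\chi_\ell$ is constant; since a monochromatic set stays monochromatic under passage to infinite subsets, after $D$ steps I obtain a single infinite $U \subseteq \omega$ on which every $\chi_\ell$ is constant, say with value $c_\ell$. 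Any tuple in $\binom U{m_0} \times \ldots \times \binom U{m_{s-1}}$ has some type $t_\ell$ and its union is a $p_\ell$-element subset of $U$, so its $\chi$-color is $c_\ell$; hence $\chi$ takes at most $D$ colors on the product, as required.

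I do not expect a genuine obstacle here, as the argument is a standard product-Ramsey reduction; the only points demanding care are the bookkeeping that verifies each type has a well-defined union-size $p_\ell \ge 1$ and is witnessed among $p_\ell$-element sets (so that Ramsey's theorem is applicable for that arity), and the observation that one must iterate over all types with \emph{nested} homogeneous sets so that a single $U$ serves every pattern at once.
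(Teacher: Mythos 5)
Your argument is correct and is exactly the standard reduction the paper has in mind: the paper gives no proof of Theorem~\ref{fbrd-scat-all.thm.inf-prod-ramsey}, describing it only as ``a straightforward consequence of Ramsey's theorem'' and citing \cite{masul-sobot}, and your encoding of a tuple by its union together with an interleaving type, followed by one application of Theorem~\ref{fbrd-scat-all.thm.RAMSEY} per type on nested infinite sets, is precisely that straightforward consequence. The bookkeeping you flag (each type has a well-defined union size $p_\ell\ge 1$ and the tuple is uniquely recoverable from its type and union) goes through without difficulty, so no gap remains.
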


\begin{figure}
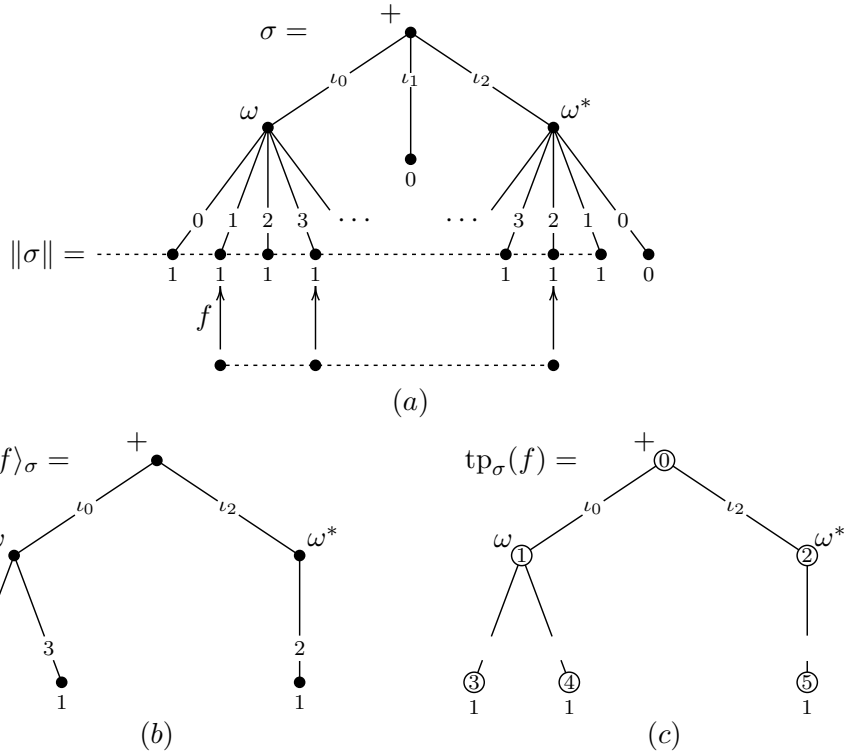

  \centering
  \def\.{\scriptsize}
\begin{pgfpicture}
  \pgfsetxvec{\pgfpoint{\acadpgfunit}{0pt}}
  \pgfsetyvec{\pgfpoint{0pt}{\acadpgfunit}}
  \pgfsetlinewidth{\acadpgflinewidth}
  \pgftransformshift{\pgfpointxy{350.0}{2425.0}}

  \begin{pgfscope}
    \pgfpathmoveto{\pgfpointxy{500.0}{-1150.0}}
    \pgfpathlineto{\pgfpointxy{404.141}{-1213.91}}
    \pgfusepath{stroke}
  \end{pgfscope}
  \begin{pgfscope}
    \pgfpathmoveto{\pgfpointxy{370.859}{-1236.09}}
    \pgfpathlineto{\pgfpointxy{275.0}{-1300.0}}
    \pgfusepath{stroke}
  \end{pgfscope}
  \begin{pgfscope}
    \pgfpathmoveto{\pgfpointxy{725.0}{-1300.0}}
    \pgfpathlineto{\pgfpointxy{627.234}{-1430.36}}
    \pgfusepath{stroke}
  \end{pgfscope}
  \begin{pgfscope}
    \pgfpathmoveto{\pgfpointxy{650.0}{-1500.0}}
    \pgfpathlineto{\pgfpointxy{663.094}{-1465.08}}
    \pgfusepath{stroke}
  \end{pgfscope}
  \begin{pgfscope}
    \pgfpathmoveto{\pgfpointxy{677.139}{-1427.63}}
    \pgfpathlineto{\pgfpointxy{725.0}{-1300.0}}
    \pgfusepath{stroke}
  \end{pgfscope}
  \begin{pgfscope}
    \pgfpathmoveto{\pgfpointxy{725.0}{-1300.0}}
    \pgfpathlineto{\pgfpointxy{725.0}{-1426.36}}
    \pgfusepath{stroke}
  \end{pgfscope}
  \begin{pgfscope}
    \pgfpathmoveto{\pgfpointxy{725.0}{-1466.36}}
    \pgfpathlineto{\pgfpointxy{725.0}{-1500.0}}
    \pgfusepath{stroke}
  \end{pgfscope}
  \begin{pgfscope}
    \pgfpathmoveto{\pgfpointxy{800.0}{-1500.0}}
    \pgfpathlineto{\pgfpointxy{786.906}{-1465.08}}
    \pgfusepath{stroke}
  \end{pgfscope}
  \begin{pgfscope}
    \pgfpathmoveto{\pgfpointxy{772.861}{-1427.63}}
    \pgfpathlineto{\pgfpointxy{725.0}{-1300.0}}
    \pgfusepath{stroke}
  \end{pgfscope}
  \begin{pgfscope}
    \pgfpathmoveto{\pgfpointxy{725.0}{-1300.0}}
    \pgfpathlineto{\pgfpointxy{822.766}{-1430.36}}
    \pgfusepath{stroke}
  \end{pgfscope}
  \begin{pgfscope}
    \pgfpathmoveto{\pgfpointxy{846.766}{-1462.36}}
    \pgfpathlineto{\pgfpointxy{875.0}{-1500.0}}
    \pgfusepath{stroke}
  \end{pgfscope}
  \begin{pgfscope}
    \pgfpathmoveto{\pgfpointxy{500.0}{-1150.0}}
    \pgfpathlineto{\pgfpointxy{595.859}{-1213.91}}
    \pgfusepath{stroke}
  \end{pgfscope}
  \begin{pgfscope}
    \pgfpathmoveto{\pgfpointxy{629.141}{-1236.09}}
    \pgfpathlineto{\pgfpointxy{725.0}{-1300.0}}
    \pgfusepath{stroke}
  \end{pgfscope}
  \begin{pgfscope}
    \pgfpathmoveto{\pgfpointxy{275.0}{-1300.0}}
    \pgfpathlineto{\pgfpointxy{372.766}{-1430.36}}
    \pgfusepath{stroke}
  \end{pgfscope}
  \begin{pgfscope}
    \pgfpathmoveto{\pgfpointxy{125.0}{-1500.0}}
    \pgfpathlineto{\pgfpointxy{153.234}{-1462.36}}
    \pgfusepath{stroke}
  \end{pgfscope}
  \begin{pgfscope}
    \pgfpathmoveto{\pgfpointxy{177.234}{-1430.36}}
    \pgfpathlineto{\pgfpointxy{275.0}{-1300.0}}
    \pgfusepath{stroke}
  \end{pgfscope}
  \begin{pgfscope}
    \pgfpathmoveto{\pgfpointxy{275.0}{-1300.0}}
    \pgfpathlineto{\pgfpointxy{227.139}{-1427.63}}
    \pgfusepath{stroke}
  \end{pgfscope}
  \begin{pgfscope}
    \pgfpathmoveto{\pgfpointxy{213.094}{-1465.08}}
    \pgfpathlineto{\pgfpointxy{200.0}{-1500.0}}
    \pgfusepath{stroke}
  \end{pgfscope}
  \begin{pgfscope}
    \pgfpathmoveto{\pgfpointxy{275.0}{-1500.0}}
    \pgfpathlineto{\pgfpointxy{275.0}{-1466.36}}
    \pgfusepath{stroke}
  \end{pgfscope}
  \begin{pgfscope}
    \pgfpathmoveto{\pgfpointxy{275.0}{-1426.36}}
    \pgfpathlineto{\pgfpointxy{275.0}{-1300.0}}
    \pgfusepath{stroke}
  \end{pgfscope}
  \begin{pgfscope}
    \pgfpathmoveto{\pgfpointxy{275.0}{-1300.0}}
    \pgfpathlineto{\pgfpointxy{322.861}{-1427.63}}
    \pgfusepath{stroke}
  \end{pgfscope}
  \begin{pgfscope}
    \pgfpathmoveto{\pgfpointxy{336.906}{-1465.08}}
    \pgfpathlineto{\pgfpointxy{350.0}{-1500.0}}
    \pgfusepath{stroke}
  \end{pgfscope}
  \begin{pgfscope}
    \pgfsetdash{{1.5pt}{2pt}}{0pt}
    \pgfpathmoveto{\pgfpointxy{800.0}{-1500.0}}
    \pgfpathlineto{\pgfpointxy{0.0}{-1500.0}}
    \pgfusepath{stroke}
  \end{pgfscope}
  \begin{pgfscope}
    \pgfpathmoveto{\pgfpointxy{725.0}{-1650.0}}
    \pgfpathlineto{\pgfpointxy{725.0}{-1550.0}}
    \pgfusepath{stroke}
  \end{pgfscope}
  \begin{pgfscope}
    \pgfpathmoveto{\pgfpointxy{718.971}{-1572.5}}
    \pgfpatharcaxes{-30.0}{0.0}{\pgfpointxy{45.0}{0.0}}{\pgfpointxy{0.0}{45.0}}
    \pgfusepath{stroke}
  \end{pgfscope}
  \begin{pgfscope}
    \pgfpathmoveto{\pgfpointxy{725.0}{-1550.0}}
    \pgfpatharcaxes{180.0}{210.0}{\pgfpointxy{45.0}{0.0}}{\pgfpointxy{0.0}{45.0}}
    \pgfusepath{stroke}
  \end{pgfscope}
  \begin{pgfscope}
    \pgfpathmoveto{\pgfpointxy{350.0}{-1650.0}}
    \pgfpathlineto{\pgfpointxy{350.0}{-1550.0}}
    \pgfusepath{stroke}
  \end{pgfscope}
  \begin{pgfscope}
    \pgfpathmoveto{\pgfpointxy{343.971}{-1572.5}}
    \pgfpatharcaxes{-30.0}{0.0}{\pgfpointxy{45.0}{0.0}}{\pgfpointxy{0.0}{45.0}}
    \pgfusepath{stroke}
  \end{pgfscope}
  \begin{pgfscope}
    \pgfpathmoveto{\pgfpointxy{350.0}{-1550.0}}
    \pgfpatharcaxes{180.0}{210.0}{\pgfpointxy{45.0}{0.0}}{\pgfpointxy{0.0}{45.0}}
    \pgfusepath{stroke}
  \end{pgfscope}
  \begin{pgfscope}
    \pgfpathmoveto{\pgfpointxy{200.0}{-1650.0}}
    \pgfpathlineto{\pgfpointxy{200.0}{-1550.0}}
    \pgfusepath{stroke}
  \end{pgfscope}
  \begin{pgfscope}
    \pgfpathmoveto{\pgfpointxy{193.971}{-1572.5}}
    \pgfpatharcaxes{-30.0}{0.0}{\pgfpointxy{45.0}{0.0}}{\pgfpointxy{0.0}{45.0}}
    \pgfusepath{stroke}
  \end{pgfscope}
  \begin{pgfscope}
    \pgfpathmoveto{\pgfpointxy{200.0}{-1550.0}}
    \pgfpatharcaxes{180.0}{210.0}{\pgfpointxy{45.0}{0.0}}{\pgfpointxy{0.0}{45.0}}
    \pgfusepath{stroke}
  \end{pgfscope}
  \begin{pgfscope}
    \pgfsetdash{{1.5pt}{2pt}}{0pt}
    \pgfpathmoveto{\pgfpointxy{725.0}{-1675.0}}
    \pgfpathlineto{\pgfpointxy{200.0}{-1675.0}}
    \pgfusepath{stroke}
  \end{pgfscope}
  \begin{pgfscope}
    \pgfpathmoveto{\pgfpointxy{500.0}{-1150.0}}
    \pgfpathlineto{\pgfpointxy{500.0}{-1213.91}}
    \pgfusepath{stroke}
  \end{pgfscope}
  \begin{pgfscope}
    \pgfpathmoveto{\pgfpointxy{500.0}{-1236.09}}
    \pgfpathlineto{\pgfpointxy{500.0}{-1350.0}}
    \pgfusepath{stroke}
  \end{pgfscope}
  \begin{pgfscope}
    \pgfpathmoveto{\pgfpointxy{100.0}{-1825.0}}
    \pgfpathlineto{\pgfpointxy{4.14101}{-1888.91}}
    \pgfusepath{stroke}
  \end{pgfscope}
  \begin{pgfscope}
    \pgfpathmoveto{\pgfpointxy{-29.141}{-1911.09}}
    \pgfpathlineto{\pgfpointxy{-125.0}{-1975.0}}
    \pgfusepath{stroke}
  \end{pgfscope}
  \begin{pgfscope}
    \pgfpathmoveto{\pgfpointxy{325.0}{-1975.0}}
    \pgfpathlineto{\pgfpointxy{325.0}{-2101.36}}
    \pgfusepath{stroke}
  \end{pgfscope}
  \begin{pgfscope}
    \pgfpathmoveto{\pgfpointxy{325.0}{-2141.36}}
    \pgfpathlineto{\pgfpointxy{325.0}{-2175.0}}
    \pgfusepath{stroke}
  \end{pgfscope}
  \begin{pgfscope}
    \pgfpathmoveto{\pgfpointxy{100.0}{-1825.0}}
    \pgfpathlineto{\pgfpointxy{195.859}{-1888.91}}
    \pgfusepath{stroke}
  \end{pgfscope}
  \begin{pgfscope}
    \pgfpathmoveto{\pgfpointxy{229.141}{-1911.09}}
    \pgfpathlineto{\pgfpointxy{325.0}{-1975.0}}
    \pgfusepath{stroke}
  \end{pgfscope}
  \begin{pgfscope}
    \pgfpathmoveto{\pgfpointxy{-125.0}{-1975.0}}
    \pgfpathlineto{\pgfpointxy{-172.861}{-2102.63}}
    \pgfusepath{stroke}
  \end{pgfscope}
  \begin{pgfscope}
    \pgfpathmoveto{\pgfpointxy{-186.906}{-2140.08}}
    \pgfpathlineto{\pgfpointxy{-200.0}{-2175.0}}
    \pgfusepath{stroke}
  \end{pgfscope}
  \begin{pgfscope}
    \pgfpathmoveto{\pgfpointxy{-125.0}{-1975.0}}
    \pgfpathlineto{\pgfpointxy{-77.1393}{-2102.63}}
    \pgfusepath{stroke}
  \end{pgfscope}
  \begin{pgfscope}
    \pgfpathmoveto{\pgfpointxy{-63.0944}{-2140.08}}
    \pgfpathlineto{\pgfpointxy{-50.0}{-2175.0}}
    \pgfusepath{stroke}
  \end{pgfscope}
  \begin{pgfscope}
    \pgfpathmoveto{\pgfpointxy{886.687}{-1833.88}}
    \pgfpathlineto{\pgfpointxy{804.141}{-1888.91}}
    \pgfusepath{stroke}
  \end{pgfscope}
  \begin{pgfscope}
    \pgfpathmoveto{\pgfpointxy{770.859}{-1911.09}}
    \pgfpathlineto{\pgfpointxy{688.313}{-1966.12}}
    \pgfusepath{stroke}
  \end{pgfscope}
  \begin{pgfscope}
    \pgfpathmoveto{\pgfpointxy{1125.0}{-1991.0}}
    \pgfpathlineto{\pgfpointxy{1125.0}{-2101.36}}
    \pgfusepath{stroke}
  \end{pgfscope}
  \begin{pgfscope}
    \pgfpathmoveto{\pgfpointxy{1125.0}{-2141.36}}
    \pgfpathlineto{\pgfpointxy{1125.0}{-2159.0}}
    \pgfusepath{stroke}
  \end{pgfscope}
  \begin{pgfscope}
    \pgfpathmoveto{\pgfpointxy{913.313}{-1833.88}}
    \pgfpathlineto{\pgfpointxy{995.859}{-1888.91}}
    \pgfusepath{stroke}
  \end{pgfscope}
  \begin{pgfscope}
    \pgfpathmoveto{\pgfpointxy{1029.14}{-1911.09}}
    \pgfpathlineto{\pgfpointxy{1111.69}{-1966.12}}
    \pgfusepath{stroke}
  \end{pgfscope}
  \begin{pgfscope}
    \pgfpathmoveto{\pgfpointxy{669.382}{-1989.98}}
    \pgfpathlineto{\pgfpointxy{627.139}{-2102.63}}
    \pgfusepath{stroke}
  \end{pgfscope}
  \begin{pgfscope}
    \pgfpathmoveto{\pgfpointxy{613.094}{-2140.08}}
    \pgfpathlineto{\pgfpointxy{605.618}{-2160.02}}
    \pgfusepath{stroke}
  \end{pgfscope}
  \begin{pgfscope}
    \pgfpathmoveto{\pgfpointxy{680.618}{-1989.98}}
    \pgfpathlineto{\pgfpointxy{722.861}{-2102.63}}
    \pgfusepath{stroke}
  \end{pgfscope}
  \begin{pgfscope}
    \pgfpathmoveto{\pgfpointxy{736.906}{-2140.08}}
    \pgfpathlineto{\pgfpointxy{744.382}{-2160.02}}
    \pgfusepath{stroke}
  \end{pgfscope}
  \begin{pgfscope}
    \pgfpathellipse{\pgfpointxy{675.0}{-1975.0}}{\pgfpointxy{16.0}{0.0}}{\pgfpointxy{0.0}{16.0}}
    \pgfusepath{stroke}
  \end{pgfscope}
  \begin{pgfscope}
    \pgfpathellipse{\pgfpointxy{900.0}{-1825.0}}{\pgfpointxy{16.0}{0.0}}{\pgfpointxy{0.0}{16.0}}
    \pgfusepath{stroke}
  \end{pgfscope}
  \begin{pgfscope}
    \pgfpathellipse{\pgfpointxy{600.0}{-2175.0}}{\pgfpointxy{16.0}{0.0}}{\pgfpointxy{0.0}{16.0}}
    \pgfusepath{stroke}
  \end{pgfscope}
  \begin{pgfscope}
    \pgfpathellipse{\pgfpointxy{750.0}{-2175.0}}{\pgfpointxy{16.0}{0.0}}{\pgfpointxy{0.0}{16.0}}
    \pgfusepath{stroke}
  \end{pgfscope}
  \begin{pgfscope}
    \pgfpathellipse{\pgfpointxy{1125.0}{-1975.0}}{\pgfpointxy{16.0}{0.0}}{\pgfpointxy{0.0}{16.0}}
    \pgfusepath{stroke}
  \end{pgfscope}
  \begin{pgfscope}
    \pgfpathellipse{\pgfpointxy{1125.0}{-2175.0}}{\pgfpointxy{16.0}{0.0}}{\pgfpointxy{0.0}{16.0}}
    \pgfusepath{stroke}
  \end{pgfscope}
  \begin{pgfscope}
    \pgfsetfillcolor{black}
    \pgfpathellipse{\pgfpointxy{125.0}{-1500.0}}{\pgfpointxy{8.0}{0.0}}{\pgfpointxy{0.0}{8.0}}
    \pgfusepath{fill,stroke}
  \end{pgfscope}
  \begin{pgfscope}
    \pgfsetfillcolor{black}
    \pgfpathellipse{\pgfpointxy{200.0}{-1500.0}}{\pgfpointxy{8.0}{0.0}}{\pgfpointxy{0.0}{8.0}}
    \pgfusepath{fill,stroke}
  \end{pgfscope}
  \begin{pgfscope}
    \pgfsetfillcolor{black}
    \pgfpathellipse{\pgfpointxy{275.0}{-1500.0}}{\pgfpointxy{8.0}{0.0}}{\pgfpointxy{0.0}{8.0}}
    \pgfusepath{fill,stroke}
  \end{pgfscope}
  \begin{pgfscope}
    \pgfsetfillcolor{black}
    \pgfpathellipse{\pgfpointxy{350.0}{-1500.0}}{\pgfpointxy{8.0}{0.0}}{\pgfpointxy{0.0}{8.0}}
    \pgfusepath{fill,stroke}
  \end{pgfscope}
  \begin{pgfscope}
    \pgfsetfillcolor{black}
    \pgfpathellipse{\pgfpointxy{875.0}{-1500.0}}{\pgfpointxy{8.0}{0.0}}{\pgfpointxy{0.0}{8.0}}
    \pgfusepath{fill,stroke}
  \end{pgfscope}
  \begin{pgfscope}
    \pgfsetfillcolor{black}
    \pgfpathellipse{\pgfpointxy{800.0}{-1500.0}}{\pgfpointxy{8.0}{0.0}}{\pgfpointxy{0.0}{8.0}}
    \pgfusepath{fill,stroke}
  \end{pgfscope}
  \begin{pgfscope}
    \pgfsetfillcolor{black}
    \pgfpathellipse{\pgfpointxy{725.0}{-1500.0}}{\pgfpointxy{8.0}{0.0}}{\pgfpointxy{0.0}{8.0}}
    \pgfusepath{fill,stroke}
  \end{pgfscope}
  \begin{pgfscope}
    \pgfsetfillcolor{black}
    \pgfpathellipse{\pgfpointxy{650.0}{-1500.0}}{\pgfpointxy{8.0}{0.0}}{\pgfpointxy{0.0}{8.0}}
    \pgfusepath{fill,stroke}
  \end{pgfscope}
  \begin{pgfscope}
    \pgfsetfillcolor{black}
    \pgfpathellipse{\pgfpointxy{275.0}{-1300.0}}{\pgfpointxy{8.0}{0.0}}{\pgfpointxy{0.0}{8.0}}
    \pgfusepath{fill,stroke}
  \end{pgfscope}
  \begin{pgfscope}
    \pgfsetfillcolor{black}
    \pgfpathellipse{\pgfpointxy{725.0}{-1300.0}}{\pgfpointxy{8.0}{0.0}}{\pgfpointxy{0.0}{8.0}}
    \pgfusepath{fill,stroke}
  \end{pgfscope}
  \begin{pgfscope}
    \pgfsetfillcolor{black}
    \pgfpathellipse{\pgfpointxy{500.0}{-1150.0}}{\pgfpointxy{8.0}{0.0}}{\pgfpointxy{0.0}{8.0}}
    \pgfusepath{fill,stroke}
  \end{pgfscope}
  \begin{pgfscope}
    \pgfsetfillcolor{black}
    \pgfpathellipse{\pgfpointxy{200.0}{-1675.0}}{\pgfpointxy{8.0}{0.0}}{\pgfpointxy{0.0}{8.0}}
    \pgfusepath{fill,stroke}
  \end{pgfscope}
  \begin{pgfscope}
    \pgfsetfillcolor{black}
    \pgfpathellipse{\pgfpointxy{350.0}{-1675.0}}{\pgfpointxy{8.0}{0.0}}{\pgfpointxy{0.0}{8.0}}
    \pgfusepath{fill,stroke}
  \end{pgfscope}
  \begin{pgfscope}
    \pgfsetfillcolor{black}
    \pgfpathellipse{\pgfpointxy{725.0}{-1675.0}}{\pgfpointxy{8.0}{0.0}}{\pgfpointxy{0.0}{8.0}}
    \pgfusepath{fill,stroke}
  \end{pgfscope}
  \begin{pgfscope}
    \pgfsetfillcolor{black}
    \pgfpathellipse{\pgfpointxy{500.0}{-1350.0}}{\pgfpointxy{8.0}{0.0}}{\pgfpointxy{0.0}{8.0}}
    \pgfusepath{fill,stroke}
  \end{pgfscope}
  \begin{pgfscope}
    \pgfsetfillcolor{black}
    \pgfpathellipse{\pgfpointxy{-200.0}{-2175.0}}{\pgfpointxy{8.0}{0.0}}{\pgfpointxy{0.0}{8.0}}
    \pgfusepath{fill,stroke}
  \end{pgfscope}
  \begin{pgfscope}
    \pgfsetfillcolor{black}
    \pgfpathellipse{\pgfpointxy{-50.0}{-2175.0}}{\pgfpointxy{8.0}{0.0}}{\pgfpointxy{0.0}{8.0}}
    \pgfusepath{fill,stroke}
  \end{pgfscope}
  \begin{pgfscope}
    \pgfsetfillcolor{black}
    \pgfpathellipse{\pgfpointxy{325.0}{-2175.0}}{\pgfpointxy{8.0}{0.0}}{\pgfpointxy{0.0}{8.0}}
    \pgfusepath{fill,stroke}
  \end{pgfscope}
  \begin{pgfscope}
    \pgfsetfillcolor{black}
    \pgfpathellipse{\pgfpointxy{-125.0}{-1975.0}}{\pgfpointxy{8.0}{0.0}}{\pgfpointxy{0.0}{8.0}}
    \pgfusepath{fill,stroke}
  \end{pgfscope}
  \begin{pgfscope}
    \pgfsetfillcolor{black}
    \pgfpathellipse{\pgfpointxy{325.0}{-1975.0}}{\pgfpointxy{8.0}{0.0}}{\pgfpointxy{0.0}{8.0}}
    \pgfusepath{fill,stroke}
  \end{pgfscope}
  \begin{pgfscope}
    \pgfsetfillcolor{black}
    \pgfpathellipse{\pgfpointxy{100.0}{-1825.0}}{\pgfpointxy{8.0}{0.0}}{\pgfpointxy{0.0}{8.0}}
    \pgfusepath{fill,stroke}
  \end{pgfscope}
  \pgftext[at={\pgfpointxy{387.5}{-1225.0}}]{\.$\iota_0$}
  \pgftext[at={\pgfpointxy{612.5}{-1225.0}}]{\.$\iota_2$}
  \pgftext[at={\pgfpointxy{834.766}{-1446.36}}]{\.0}
  \pgftext[at={\pgfpointxy{779.883}{-1446.36}}]{\.1}
  \pgftext[at={\pgfpointxy{725.0}{-1446.36}}]{\.2}
  \pgftext[at={\pgfpointxy{670.117}{-1446.36}}]{\.3}
  \pgftext[right,at={\pgfpointxy{615.234}{-1446.36}}]{$\cdots$}
  \pgftext[at={\pgfpointxy{165.234}{-1446.36}}]{\.0}
  \pgftext[at={\pgfpointxy{220.117}{-1446.36}}]{\.1}
  \pgftext[at={\pgfpointxy{275.0}{-1446.36}}]{\.2}
  \pgftext[at={\pgfpointxy{329.883}{-1446.36}}]{\.3}
  \pgftext[left,at={\pgfpointxy{384.766}{-1446.36}}]{$\cdots$}
  \pgftext[bottom,right,at={\pgfpointxy{486.362}{-1136.32}}]{$+$}
  \pgftext[bottom,right,at={\pgfpointxy{261.298}{-1286.39}}]{$\omega$}
  \pgftext[bottom,left,at={\pgfpointxy{737.21}{-1285.2}}]{$\omega^*$}
  \pgftext[right,at={\pgfpointxy{-12.0}{-1500.0}}]{$\|\sigma\| =$}
  \pgftext[right,at={\pgfpointxy{338.0}{-1150.0}}]{$\sigma = $}
  \pgftext[right,at={\pgfpointxy{188.0}{-1600.0}}]{$f$}
  \pgftext[top,at={\pgfpointxy{500.0}{-1712.0}}]{$(a)$}
  \pgftext[top,at={\pgfpointxy{100.0}{-2237.0}}]{$(b)$}
  \pgftext[top,at={\pgfpointxy{900.0}{-2237.0}}]{$(c)$}
  \pgftext[at={\pgfpointxy{500.0}{-1225.0}}]{\.$\iota_1$}
  \pgftext[at={\pgfpointxy{-12.5}{-1900.0}}]{\.$\iota_0$}
  \pgftext[at={\pgfpointxy{212.5}{-1900.0}}]{\.$\iota_2$}
  \pgftext[at={\pgfpointxy{325.0}{-2121.36}}]{\.2}
  \pgftext[at={\pgfpointxy{-179.883}{-2121.36}}]{\.1}
  \pgftext[at={\pgfpointxy{-70.1169}{-2121.36}}]{\.3}
  \pgftext[bottom,right,at={\pgfpointxy{86.3617}{-1811.32}}]{$+$}
  \pgftext[bottom,right,at={\pgfpointxy{-138.702}{-1961.39}}]{$\omega$}
  \pgftext[bottom,left,at={\pgfpointxy{337.21}{-1960.2}}]{$\omega^*$}
  \pgftext[right,at={\pgfpointxy{-37.0}{-1825.0}}]{$\tree{f}_\sigma = $}
  \pgftext[at={\pgfpointxy{787.5}{-1900.0}}]{\.$\iota_0$}
  \pgftext[at={\pgfpointxy{1012.5}{-1900.0}}]{\.$\iota_2$}
  \pgftext[bottom,right,at={\pgfpointxy{886.362}{-1811.32}}]{$+$}
  \pgftext[bottom,right,at={\pgfpointxy{661.298}{-1961.39}}]{$\omega$}
  \pgftext[bottom,left,at={\pgfpointxy{1137.21}{-1960.2}}]{$\omega^*$}
  \pgftext[right,at={\pgfpointxy{763.0}{-1825.0}}]{$\tp_\sigma(f) = $}
  \pgftext[at={\pgfpointxy{900.0}{-1825.0}}]{\.0}
  \pgftext[at={\pgfpointxy{675.0}{-1975.0}}]{\.1}
  \pgftext[at={\pgfpointxy{1125.0}{-1975.0}}]{\.2}
  \pgftext[at={\pgfpointxy{600.0}{-2175.0}}]{\.3}
  \pgftext[at={\pgfpointxy{750.0}{-2175.0}}]{\.4}
  \pgftext[at={\pgfpointxy{1125.0}{-2175.0}}]{\.5}
  \pgftext[top,at={\pgfpointxy{875.0}{-1520.0}}]{\.0}
  \pgftext[top,at={\pgfpointxy{800.0}{-1520.0}}]{\.1}
  \pgftext[top,at={\pgfpointxy{725.0}{-1520.0}}]{\.1}
  \pgftext[top,at={\pgfpointxy{650.0}{-1520.0}}]{\.1}
  \pgftext[top,at={\pgfpointxy{125.0}{-1520.0}}]{\.1}
  \pgftext[top,at={\pgfpointxy{200.0}{-1520.0}}]{\.1}
  \pgftext[top,at={\pgfpointxy{275.0}{-1520.0}}]{\.1}
  \pgftext[top,at={\pgfpointxy{350.0}{-1520.0}}]{\.1}
  \pgftext[top,at={\pgfpointxy{500.0}{-1370.0}}]{\.0}
  \pgftext[top,at={\pgfpointxy{-200.0}{-2195.0}}]{\.1}
  \pgftext[top,at={\pgfpointxy{-50.0}{-2195.0}}]{\.1}
  \pgftext[top,at={\pgfpointxy{325.0}{-2195.0}}]{\.1}
  \pgftext[top,at={\pgfpointxy{600.0}{-2203.0}}]{\.1}
  \pgftext[top,at={\pgfpointxy{750.0}{-2203.0}}]{\.1}
  \pgftext[top,at={\pgfpointxy{1125.0}{-2203.0}}]{\.1}
\end{pgfpicture}
  \caption{$(a)$ A tree $\sigma$, the chain $\|\sigma\|$ it encodes and an embedding $f : 3 \hookrightarrow \|\sigma\|$;
           $(b)$ The tree $\tree{f}_\sigma$ that corresponds to $f$;
           $(c)$ The type $\tp_\sigma(f)$ of the embedding $f$.}
  \label{fbrd-scat-all.fig.tree-type}
\end{figure}

Take any tree $\sigma \in \calS$.
Every embedding $f : n \hookrightarrow \|\sigma\|$, $n \in \NN$, corresponds to a subtree of $\sigma$
induced by branches $[v_0, \ell_i]_{\|\sigma\|}$, $i < n$, where $v_0$ is the root of $\sigma$
and $\ell_i$ is the leaf of $\sigma$ that corresponds to $f(i)$, $i < n$.
Let us denote this subtree of $\sigma$ by $\tree{f}_\sigma$. Clearly, $\tree{f}_\sigma$ has $n$ leaves
and its height is the same as the height of $\sigma$ (see Fig.~\ref{fbrd-scat-all.fig.tree-type}~$(a)$ and $(b)$).

Assume, now, that $\tree{f}_\sigma$ has $p$ vertices. If we replace the vertex set of
$\tree{f}_\sigma$ by $\{0, 1, \ldots, p - 1\}$ so that the usual ordering of the integers agrees with the BFS-ordering of the
new tree, and then erase only those edge labels that come from $\omega$, the resulting labelled ordered rooted tree on the set of vertices $\{0, 1, \ldots, p-1\}$
will be referred to as the \emph{type of $f$} and will be denoted by $\tp_\sigma(f)$.
Fig.~\ref{fbrd-scat-all.fig.tree-type}~$(c)$ depicts the type of the embedding $f$ given in
Fig.~\ref{fbrd-scat-all.fig.tree-type}~$(a)$.

A finite labelled ordered rooted tree $\tau$ is an
\emph{$(n, \sigma)$-type} if $\tau = \tp_\sigma(g)$ for some embedding $g : n \hookrightarrow \|\sigma\|$.
Therefore, for all $n \in \NN$ and all $\sigma \in \calS$ each
$(n, \sigma)$-type is a labelled ordered rooted tree with the following properties:
\begin{itemize}
\item
  its set of vertices is $\{0, 1, \ldots, p-1\}$ for some $p \in \NN$
  and the BFS-order of the tree coincides with the usual ordering of the integers
  (hence 0 is the root of the tree);
\item
  it has $n$ leaves and its height is the height of $\sigma$;
\item
  its leaves are labelled by 1, while other vertices are labelled by~$+$, $\omega$ or~$\omega^*$; and
\item
  its edges going out of vertices labelled by~$+$ are labelled by~$\iota_0$, $\iota_1$, \ldots, while other edges are not labelled.
\end{itemize}
(Note that a labelled ordered rooted tree with the above properties need not be an $(n, \sigma)$-type.)
The following is now obvious:

\begin{LEM}
  Given an $n \in \NN$ and a $\sigma \in \calS$ with bounded finite sums,
  there are only finitely many $(n, \sigma)$-types.
\end{LEM}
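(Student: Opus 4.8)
The plan is to bound, simultaneously and in terms of $n$ and $\sigma$ alone, the number of vertices that an $(n,\sigma)$-type can have and the sizes of the alphabets from which its vertex- and edge-labels are drawn. Once both are finite, the conclusion is immediate, since there are only finitely many ordered rooted trees on a vertex set of bounded size, and only finitely many ways to label the vertices and edges of each of them from finite label sets.

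First I would record that $h := \high(\sigma)$ is finite. This follows by an easy induction along the construction of $\calS$: a tree in $\calA_0$ has height $1$, each of the three operations used to build $\calA_m$ from $\bigcup_{i<m}\calA_i$ raises the height by exactly one, and forming a finite sum of trees from $\calA$ raises it by one more; hence every branch of $\sigma$ has finite order type bounded by $h$. Next I would bound the number of vertices of an $(n,\sigma)$-type $\tp_\sigma(g)$. By definition $\tree{g}_\sigma$ is the subtree of $\sigma$ induced by the $n$ branches $[v_0,\ell_i]_{\|\sigma\|}$, $i<n$, so every vertex of $\tree{g}_\sigma$ lies on one of these $n$ root-to-leaf branches; as each branch has at most $h$ vertices, $\tree{g}_\sigma$ has at most $nh$ vertices. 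Passing from $\tree{g}_\sigma$ to $\tp_\sigma(g)$ only relabels the vertices by $\{0,\dots,p-1\}$ and erases some edge labels, so it changes neither the underlying tree nor the vertex count. Thus every $(n,\sigma)$-type is a labelled ordered rooted tree on a vertex set of size at most $P := nh$.

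It remains to invoke the hypothesis of bounded finite sums to make the label alphabets finite. By the listed properties of types, vertex labels come from the fixed four-element set $\{1,+,\omega,\omega^*\}$, and each edge is either unlabelled (when it issues from an $\omega$- or $\omega^*$-vertex) or carries a label $\iota_k$ (when it issues from a $+$-vertex). Because $\sigma$ has bounded finite sums, there is a $w\in\NN$ with $L_e(\sigma)\subseteq\omega\union\{\iota_0,\dots,\iota_w\}$, so the only edge labels that can appear in any type are among $\iota_0,\dots,\iota_w$, giving at most $w+2$ edge-label possibilities. Since there are finitely many ordered rooted trees on at most $P$ vertices, and finitely many ways to label the vertices and edges of such a tree from these two finite alphabets, there are only finitely many $(n,\sigma)$-types.

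I would expect the only step needing care to be the vertex bound: one must note that the leaves $\ell_i$ are genuinely distinct (since $g$ is an embedding) and that $\tree{g}_\sigma$ contains no vertex lying off the chosen $n$ branches, so the crude estimate ``at most $n$ branches, each with at most $h$ vertices'' is legitimate. The role of bounded finite sums is exactly to keep the edge-label alphabet finite: without it a $+$-vertex could carry edge labels $\iota_k$ with arbitrarily large $k$, and then even though the vertex count stays bounded by $P$, infinitely many distinct types would already arise.
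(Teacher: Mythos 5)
Your proof is correct; the paper states this lemma without proof (introducing it with ``The following is now obvious''), and your argument --- bounding the vertex count of a type by $n\cdot\high(\sigma)$ and the vertex- and edge-label alphabets via the four fixed vertex labels and the bounded-finite-sums hypothesis --- is exactly the justification the author leaves implicit. The only cosmetic imprecision is your claim that each operation raises the height by exactly one: the composite operation forming $\calA_m$ (an infinite sum of finite sums) can raise it by two, but this does not affect the finiteness of $\high(\sigma)$ on which your vertex bound rests.
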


For an $(n, \sigma)$-type $\tau$ and a set of edge labels $U \subseteq \omega \union \{\iota_n : n \in \omega\}$ let
$$
  \Emb_\tau(n, \restr \sigma U) = \{ f \in \Emb(n, \|\restr \sigma U\|) : \tp_\sigma(f) = \tau \}.
$$

\begin{LEM}\label{fbrd-scat-all.lem.1}
  Let $V \subseteq \omega$ be an infinite subset of~$\omega$, let $U = V \cup \{\iota_n : n \in \omega\}$,
  let $n \in \NN$ and let $\sigma \in \calS$ have bounded finite sums.
  For every $(n, \sigma)$-type $\tau$ there is a $D_\tau \in \NN$
  such that for every $k \ge 2$ and every coloring
  $
    \chi : \Emb_\tau(n, \restr \sigma U) \to k
  $
  there is an infinite $V' \subseteq V$ such that for $U' = V' \union \{\iota_n : n \in \omega\}$:
  $$
    |\chi(\Emb_\tau(n, \restr \sigma {U'})| \le D_\tau.
  $$
\end{LEM}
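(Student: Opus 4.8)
The plan is to reduce the statement to the infinite Product Ramsey Theorem (Theorem~\ref{fbrd-scat-all.thm.inf-prod-ramsey}) by encoding each embedding of the fixed type $\tau$ as a tuple of finite subsets of $V$, with one coordinate for every vertex of $\tau$ labelled by $\omega$ or $\omega^*$. First I would fix the $(n,\sigma)$-type $\tau$ and, since $\tau$ is a finite tree, list its finitely many vertices labelled $\omega$ or $\omega^*$ as $v_0, \ldots, v_{s-1}$, letting $m_i \ge 1$ be the number of children of $v_i$ in $\tau$. If $s = 0$ then no branch of $\tau$ meets an $\omega$- or $\omega^*$-node, the surviving $\iota$-labels recorded in $\tau$ already determine the embedding, and $|\Emb_\tau(n, \restr\sigma{U'})| \le 1$ for every $U'$, so $D_\tau = 1$ works; hence I would assume $s \ge 1$.

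The key step will be the encoding. Given $f \in \Emb_\tau(n, \restr\sigma U)$, the tree $\tree{f}_\sigma$ is a copy of $\tau$ sitting inside $\sigma$ and carrying all of its edge labels; at the vertex of $\tree{f}_\sigma$ corresponding to $v_i$ the $m_i$ outgoing edges carry $\omega$-labels forming a set $X_i(f) \in \binom V{m_i}$ (these labels lie in $V$ exactly because $f \in \Emb(n, \restr\sigma U)$). This gives a map
$$
  \Phi : \Emb_\tau(n, \restr\sigma U) \to \textstyle\binom V{m_0} \times \ldots \times \binom V{m_{s-1}}, \qquad \Phi(f) = (X_0(f), \ldots, X_{s-1}(f)),
$$
and I would check that $\Phi$ is injective by reconstructing $\tree{f}_\sigma$ from $\tau$ and $\Phi(f)$ top-down: the root of $\tree{f}_\sigma$ is the root of $\sigma$, and once the $\sigma$-vertex naming a vertex $x$ of $\tau$ has been located, the children to descend into are read from $\tau$ itself when $x$ is a $+$-vertex (its $\iota$-labels name them) and from $X_i(f)$ when $x = v_i$. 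This well-founded recursion recovers every branch of $\tree{f}_\sigma$, hence the leaves, hence $f$.

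Then, given $k \ge 2$ and $\chi : \Emb_\tau(n, \restr\sigma U) \to k$, I would define $\chi^* : \binom V{m_0} \times \ldots \times \binom V{m_{s-1}} \to k$ by $\chi^*(\bar X) = \chi(f)$ when $\bar X = \Phi(f)$ for the unique such $f$, and $\chi^*(\bar X) = 0$ otherwise. Identifying the infinite set $V$ with $\omega$ via its order isomorphism and applying Theorem~\ref{fbrd-scat-all.thm.inf-prod-ramsey} with $s$ and $m_0, \ldots, m_{s-1}$ would yield $D_\tau = D(s; m_0, \ldots, m_{s-1})$ and an infinite $V' \subseteq V$ with $|\chi^*(\binom{V'}{m_0} \times \ldots \times \binom{V'}{m_{s-1}})| \le D_\tau$. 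Setting $U' = V' \union \{\iota_n : n \in \omega\}$, every $f \in \Emb_\tau(n, \restr\sigma{U'})$ uses only edge labels from $V'$, so $\Phi(f) \in \binom{V'}{m_0} \times \ldots \times \binom{V'}{m_{s-1}}$ and $\chi(f) = \chi^*(\Phi(f))$; hence $\chi(\Emb_\tau(n, \restr\sigma{U'}))$ is contained in $\chi^*(\binom{V'}{m_0} \times \ldots \times \binom{V'}{m_{s-1}})$ and has at most $D_\tau$ elements.

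The main obstacle will be justifying the injectivity of $\Phi$ cleanly: the $\omega$/$\omega^*$-nodes of $\tau$ are nested, and the subtree of $\sigma$ hanging below a chosen child depends on the label chosen higher up (the chains $\|\tau_0\| \hookrightarrow \|\tau_1\| \hookrightarrow \ldots$ are only increasing, not constant). The top-down reconstruction resolves this, because by the time the node $v_i$ is processed the ancestral choices have already fixed which $\sigma$-vertex it names, and $X_i(f)$ then selects that vertex's children. I also note that $D_\tau$ depends only on the finite tree $\tau$, so the bounded-finite-sums hypothesis on $\sigma$ is not actually needed for this lemma; it will matter only later, to guarantee that there are finitely many $(n,\sigma)$-types in total.
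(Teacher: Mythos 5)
Your proposal is correct and follows essentially the same route as the paper: the same encoding $\Phi$ of a type-$\tau$ embedding by the tuple of $\omega$-label sets at its $\omega$/$\omega^*$-vertices, the same transferred coloring $\chi^*$, and the same application of Theorem~\ref{fbrd-scat-all.thm.inf-prod-ramsey} yielding $D_\tau = D(s; m_0, \ldots, m_{s-1})$. The only divergence is the degenerate case $s = 0$, where you obtain the sharper bound $D_\tau = 1$ because the $\iota$-labels pin down a unique embedding of type $\tau$ (the paper instead invokes the bounded-finite-sums hypothesis to bound the number of such embeddings by $\binom{(w+1)^h}{n}$), and your side remark that this hypothesis is not actually needed for the lemma itself --- only later, to ensure there are finitely many $(n,\sigma)$-types --- is accurate.
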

\begin{proof}
  Assume, first, that no vertex of $\tau$ is labelled by either $\omega$ or $\omega^*$.
  Since $\sigma$ has bounded finite sums, there is an integer $w \in \NN$ such that
  $L_e(\sigma) \subseteq \omega \union \{\iota_0, \ldots, \iota_w\}$.
  Let $\high(\sigma) = h$. Then there are at most $(w + 1)^h$ branches in $\sigma$ that do not
  pass through a vertex labelled by $\omega$ or $\omega^*$. There are at most $\binom{(w + 1)^h}{n}$ subtrees of
  $\sigma$ induced by choosing some $n$ of those branches. Therefore, for any coloring
  $\chi : \Emb_\tau(n, \restr \sigma U) \to k$ and any $U'$ it must be the case that
  $|\chi(\Emb_\tau(n, \restr \sigma {U'})| \le \binom{(w + 1)^h}{n}$.

  For the rest of the proof assume that at least one vertex of $\tau$ is labelled by $\omega$ or $\omega^*$.
  Let $\ell_0 < \ell_1 < \ldots < \ell_{s-1}$ be all the vertices of~$\tau$ labelled by $\omega$ or $\omega^*$.
  Let $m_i = |\out_\tau(\ell_i)|$, $i < s$, and let
  $D_\tau = D(s; m_0, m_1, \ldots, m_{s-1})$ be the number provided by Theorem~\ref{fbrd-scat-all.thm.inf-prod-ramsey}.

  Take any $f \in \Emb_\tau(n, \restr \sigma U)$ and let $(v_0, v_1, \ldots, v_{p-1})$ be the vertex
  set of $\tree{f}_{\restr \sigma U}$ ordered by the BFS-order of $\tree{f}_{\restr \sigma U}$.
  Since $\tp_\sigma(f) = \tau$, the only vertices
  in $\tree{f}_{\restr \sigma U}$ labelled by $\omega$ or $\omega^*$ are $v_{\ell_0}$, $v_{\ell_1}$, \ldots, $v_{\ell_{s-1}}$.
  Let $L_f(i) \subseteq V$ be the set of all the labels used to label the edges in $\out(v_{\ell_i})$, $i < s$.
  Clearly, $|L_f(i)| = m_i$, $i < s$.

  By construction, each embedding $f \in \Emb_\tau(n, \restr \sigma U)$ is uniquely determined by the
  sequence $(L_f(0), L_f(1), \ldots, L_f(s-1))$ of
  subsets of $V$ of sizes $m_{0}$, $m_{1}$, \ldots, $m_{s-1}$, respectively.
  Therefore,
  $$
    \Phi : \Emb_\tau(n, \restr \sigma U) \to \binom{V}{m_0} \times \binom{V}{m_1} \times \ldots \times \binom{V}{m_{s-1}}
  $$
  given by
  $$
    \Phi(f) = (L_f(0), L_f(1), \ldots, L_f(s-1))
  $$
  is an injective mapping.
  Now, take any $k \ge 2$ and any coloring
  $\chi : \Emb_\tau(n, \restr \sigma U) \to k$, and define
  $$
    \chi' : \binom{V}{m_0} \times \ldots \times \binom{V}{m_{s-1}} \to k
  $$
  by
  $$
    \chi'(A_0, A_1, \ldots, A_{s-1}) = \begin{cases}
      \chi(f), & \Phi(f) = (A_0, A_1, \ldots, A_{s-1}),\\
      0, & \text{otherwise}.
    \end{cases}
  $$
  Then by Theorem~\ref{fbrd-scat-all.thm.inf-prod-ramsey} there exists an infinite $V' \subseteq V$ such that
  $$
    \left|\chi'\left(\binom {V'}{m_0} \times \ldots \times \binom {V'}{m_{s-1}}\right)\right| \le D_\tau.
  $$
  Let $U' = V' \union \{\iota_n : n \in \omega\}$. The construction of $\chi'$ ensures that
  $$
    \chi(\Emb_\tau(n, \restr \sigma {U'})) \subseteq \chi'\left(\binom {V'}{m_0} \times \ldots \times \binom {V'}{m_{s-1}}\right),
  $$
  whence $|\chi(\Emb_\tau(n, \restr \sigma {U'}))| \le D_\tau$.
\end{proof}

\begin{THM}\label{fbrd-scat-all.thm.finite-rank}
  Let $S$ be a countable scattered chain such that $r_H(S) < \omega$. Assume additionally that
  $S$ has bounded finite sums. Then $\spec(S)$ is finite.
\end{THM}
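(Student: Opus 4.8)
The plan is to reduce to the tree encoding and then stratify every coloring according to the (finitely many) types. Since $S$ has bounded finite sums, by definition there is a $\sigma \in \calS$ with bounded finite sums such that $S \equiv \|\sigma\|$; as $C_1 \equiv C_2$ implies $\spec(C_1) = \spec(C_2)$, it suffices to prove $T(n, \|\sigma\|) < \infty$ for every $n \in \NN$. Fix such an $n$. By the lemma preceding Lemma~\ref{fbrd-scat-all.lem.1} there are only finitely many $(n, \sigma)$-types; enumerate them $\tau_0, \ldots, \tau_{N-1}$ and let $D_{\tau_0}, \ldots, D_{\tau_{N-1}}$ be the corresponding integers furnished by Lemma~\ref{fbrd-scat-all.lem.1} (each $D_{\tau_j}$ depends only on $\tau_j$). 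I will show that $T(n, \|\sigma\|) \le \sum_{j < N} D_{\tau_j}$.

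First I would fix $k \ge 2$ and a coloring $\chi : \Emb(n, \|\sigma\|) \to k$, and peel off the types one at a time. Put $V_0 = \omega$ and $U_0 = V_0 \union \{\iota_m : m \in \omega\}$, so that $\restr \sigma {U_0} = \sigma$. Applying Lemma~\ref{fbrd-scat-all.lem.1} to the type $\tau_0$ and to $\chi$ restricted to $\Emb_{\tau_0}(n, \restr \sigma {U_0})$ yields an infinite $V_1 \subseteq V_0$ with $|\chi(\Emb_{\tau_0}(n, \restr \sigma {U_1}))| \le D_{\tau_0}$, where $U_1 = V_1 \union \{\iota_m : m \in \omega\}$. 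Iterating over $j = 0, 1, \ldots, N-1$ --- at stage $j$ applying the lemma to $\tau_j$ and the infinite set $V_j$ --- produces a descending sequence $\omega = V_0 \supseteq V_1 \supseteq \cdots \supseteq V_N$ of infinite sets with $|\chi(\Emb_{\tau_j}(n, \restr \sigma {U_{j+1}}))| \le D_{\tau_j}$ for each $j < N$.

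The key observation is that these per-type bounds are stable under further shrinking: whenever $V' \subseteq V$ the tree $\restr \sigma {U'}$ is a subtree of $\restr \sigma U$, so $\|\restr \sigma {U'}\|$ is a subchain of $\|\restr \sigma U\|$ and therefore $\Emb_{\tau}(n, \restr \sigma {U'}) \subseteq \Emb_{\tau}(n, \restr \sigma U)$ (types being computed in $\sigma$ in either case). Since $V_N \subseteq V_{j+1}$ for all $j$, this gives $|\chi(\Emb_{\tau_j}(n, \restr \sigma {U_N}))| \le D_{\tau_j}$ for every $j < N$. As every $f \in \Emb(n, \|\restr \sigma {U_N}\|)$ has type $\tp_\sigma(f) \in \{\tau_0, \ldots, \tau_{N-1}\}$, we obtain the partition $\Emb(n, \|\restr \sigma {U_N}\|) = \UNION_{j < N} \Emb_{\tau_j}(n, \restr \sigma {U_N})$, whence
\[
  |\chi(\Emb(n, \|\restr \sigma {U_N}\|))| \le \sum_{j < N} |\chi(\Emb_{\tau_j}(n, \restr \sigma {U_N}))| \le \sum_{j < N} D_{\tau_j}.
\]

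Finally I would transfer this bound back to $\|\sigma\|$ itself. By Lemma~\ref{fbrd-scat.lem.EQ} we have $\|\restr \sigma {U_N}\| \equiv \|\sigma\|$, so there is an embedding $e : \|\sigma\| \hookrightarrow \|\restr \sigma {U_N}\|$; composing $e$ with the inclusion $\|\restr \sigma {U_N}\| \hookrightarrow \|\sigma\|$ gives a self-embedding $w : \|\sigma\| \hookrightarrow \|\sigma\|$ whose image lies in $\|\restr \sigma {U_N}\|$. Hence $w \circ \Emb(n, \|\sigma\|) \subseteq \Emb(n, \|\restr \sigma {U_N}\|)$, so $|\chi(w \circ \Emb(n, \|\sigma\|))| \le \sum_{j < N} D_{\tau_j}$. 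Since $k$ and $\chi$ were arbitrary, this shows $\|\sigma\| \longrightarrow (\|\sigma\|)^n_{k,\, t}$ with $t = \sum_{j < N} D_{\tau_j}$, so $T(n, \|\sigma\|) \le t < \infty$, and thus $\spec(S) = \spec(\|\sigma\|)$ is finite. I expect the two delicate points to be the persistence of the per-type bounds under the successive restrictions (resting on the monotonicity $\Emb_\tau(n, \restr \sigma {U'}) \subseteq \Emb_\tau(n, \restr \sigma U)$) and the final passage from the equivalence $\|\restr \sigma {U_N}\| \equiv \|\sigma\|$ to a genuine self-embedding of $\|\sigma\|$, which is what lets the embedding formulation of the big Ramsey degree conclude; the remainder is bookkeeping over the finite set of types.
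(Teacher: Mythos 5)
Your proposal is correct and follows essentially the same route as the paper's own proof: reduce to a tree $\sigma \in \calS$ with bounded finite sums, peel off the finitely many $(n,\sigma)$-types one at a time via Lemma~\ref{fbrd-scat-all.lem.1} along a descending chain of infinite subsets of $\omega$, sum the per-type bounds, and transfer back through $\|\restr{\sigma}{U}\| \equiv \|\sigma\|$ (Lemma~\ref{fbrd-scat.lem.EQ}). The only differences are cosmetic: you make explicit the monotonicity $\Emb_\tau(n,\restr{\sigma}{U'}) \subseteq \Emb_\tau(n,\restr{\sigma}{U})$ and the construction of the final self-embedding $w$, both of which the paper leaves implicit.
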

\begin{proof}
  Since $S$ countable scattered chain $S$ of finite Hausdorff rank and with bounded finite sums,
  there is a tree $\sigma \in \calS$ with bounded finite sums such that $S \equiv \|\sigma\|$.
  Without loss of generality we may assume that $S = \|\sigma\|$.
  Take any $n \in \NN$ and let us show that $T(n, S)$ is finite.
  Let $\tau_0$, $\tau_1$, \dots, $\tau_{s-1}$ be all the $(n, \sigma)$-types
  and let $D_{\tau_0}$, $D_{\tau_1}$, \dots, $D_{\tau_{s-1}}$ be the integers provided by Lemma~\ref{fbrd-scat-all.lem.1}.
  We are going to show that
  $
    T(n, S) \le \sum_{j < s} D_{\tau_j} < \infty
  $.
  
  Take any $k \ge 2$ and any coloring $\chi : \Emb(n, S) \to k$. Since
  $$
    \Emb(n, S) = \Emb(n, \|\sigma\|) = \bigcup_{j < s} \Emb_{\tau_j}(n, \sigma)
  $$
  we have that $\chi$ is also a coloring of $\Emb_{\tau_j}(n, \sigma)$ for
  all $j < s$. By Lemma~\ref{fbrd-scat-all.lem.1} there is an infinite $V_0 \subseteq \omega$ such that
  for $U_0 = V_0 \union \{\iota_q : q \in \omega\}$,
  $$
    |\chi(\Emb_{\tau_0}(n, \restr{\sigma}{U_0}))| \le D_{\tau_0}.
  $$
  By the same lemma for each $j \in \{1, \ldots, s-1\}$
  we can inductively construct an infinite $V_j \subseteq V_{j-1}$ such that
  for $U_j = V_j \union \{\iota_q : q \in \omega\}$,
  $$
    |\chi(\Emb_{\tau_j}(n, \restr{\sigma}{U_j}))| \le D_{\tau_j}.
  $$
  Then, having in mind that $U_{s-1} \subseteq U_j$ for all $j < s$,
  \begin{align*}
    |\chi(\Emb(n, \|\restr{\sigma}{U_{s-1}}\|))|
    &= \sum_{j < t} |\chi(\Emb_{\tau_j}(n, \restr{\sigma}{U_{s-1}}))|\\
    &\le \sum_{j < t} |\chi(\Emb_{\tau_j}(n, \restr{\sigma}{U_{j}}))| \le \sum_{j < t} D_{\tau_j}.
  \end{align*}
  The theorem now follows from the fact that $\|\sigma\| \equiv \|\restr{\sigma}{U_{s-1}}\|$
  (Lemma~\ref{fbrd-scat.lem.EQ}).
\end{proof}

\begin{COR}
  Let $S$ be a countable scattered chain of finite Hausdorff rank.
  If $T(1, S) = 1$ then $\spec(S)$ is finite.
\end{COR}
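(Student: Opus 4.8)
The plan is to read the hypothesis $T(1,S)=1$ as \emph{indivisibility} of $S$: every finite colouring of the one-element subchains of $S$ (i.e.\ of the points of $S$) admits a copy $U\cong S$ on which the colouring is constant. The target is to deduce that $S$ has bounded finite sums and then quote Theorem~\ref{fbrd-scat-all.thm.finite-rank}. A first, routine reduction records that an indivisible chain is additively indecomposable: a cut $S=A+B$ is a particular $2$-colouring, and ``$S\hookrightarrow A$ or $S\hookrightarrow B$'' is exactly the additive indecomposability criterion (any embedding $S\hookrightarrow A+B$ pulls back, via the preimages of $A$ and $B$, to an initial/final cut of $S$). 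Hence by Lemma~\ref{fbrd-scat-spec.lem.AI} we may assume $S=\|\tau\|$ for some $\tau\in\calA$, and by Laver's normal form (Theorem~\ref{fbrd-scat-spec.thm.laver-2}) that, up to $\equiv$, $S=\sum_{i\in\omega}(R_{i0}+\dots+R_{in})$ or $S=\sum_{i\in\omega^*}(R_{i0}+\dots+R_{in})$ with the $R_{ij}$ additively indecomposable of smaller Hausdorff rank and $R_{0j}\hookrightarrow R_{1j}\hookrightarrow\cdots$ for each $j$.

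The core is the following structural claim, proved by induction on $r_H(S)$: \emph{an indivisible countable scattered chain of finite Hausdorff rank admits a tree representation with no $+$-labelled vertices} (a ``block-free'' tree, whose only internal labels are $\omega,\omega^*$), and such a tree has $L_e\subseteq\omega$, i.e.\ bounded finite sums with $w=0$. In the inductive step I would first argue that the top finite sum must be inessential: if $n\ge 1$ and the block sum does not already collapse to a single dominant column, I would separate the columns — for instance colour red the points lying in the last summands $R_{in}$ (across all $i$) and blue the rest — and use that every final segment of an $\omega$-sum of an $\hookrightarrow$-increasing sequence is $\equiv S$ to show neither colour class embeds $S$, contradicting indivisibility. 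This forces $S\equiv\sum_{i\in\omega}R_i$ (or its $\omega^*$ analogue) for a single $\hookrightarrow$-increasing sequence. Then I would show that each surviving summand $R_i$ is itself indivisible, so that the induction hypothesis supplies a block-free representation of every $R_i$; reassembling these under the single $\omega$- (resp.\ $\omega^*$-) sum yields a block-free representation of $S$. Feeding this into Theorem~\ref{fbrd-scat-all.thm.finite-rank} gives that $\spec(S)$ is finite.

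The main obstacle is precisely this structural step. The subtlety is the \emph{collapse} phenomenon: blocks such as $\omega+\omega\equiv\omega\cdot 2$ or $\omega^*+\omega^*$ are reducible (they fold back into a plain $\omega$- or $\omega^*$-sum) and remain compatible with indivisibility, whereas genuinely mixed blocks such as $\omega^*+\omega\equiv\zeta$ are irreducible and — as the explicit splitting of $\zeta\cdot\omega$ into its $\omega^*$-parts and $\omega$-parts shows — destroy indivisibility. Distinguishing reducible from irreducible blocks, and converting an irreducible block into a dividing colouring of the \emph{whole} of $S$ while correctly tracking the two directions $\omega$ and $\omega^*$ and the recurrence of tails through the recursion, is the delicate heart of the argument, and it sits right next to the finite-rank/unbounded-finite-sums case that the paper leaves open. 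A cleaner alternative, should it be available in the cited literature, would be to invoke a known classification of indivisible countable scattered chains of finite rank as the ``exponential'' chains $\omega^{(\delta)}$, which are manifestly block-free, and then apply Theorem~\ref{fbrd-scat-all.thm.finite-rank} directly.
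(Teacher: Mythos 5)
Your overall route is exactly the paper's: show that $T(1,S)=1$ forces a tree representation of $S$ with no (hence trivially bounded) finite sums, then invoke Theorem~\ref{fbrd-scat-all.thm.finite-rank}. The preliminary reductions are fine --- $T(1,S)=1$ is indeed indivisibility, and your pull-back argument correctly shows that an indivisible chain is additively indecomposable. The difference is in how the structural claim is discharged. The paper does not prove it at all: it cites Theorem~2.13 of \cite{laver-decomposition}, which characterizes the countable scattered order types with $T(1,S)=1$ as the \emph{hereditarily increasing} ones, and these are precisely the chains encoded by trees in $\calA$ with no finite sums. So the ``cleaner alternative, should it be available in the cited literature'' that you close with is not merely available --- it is the entire content of the paper's proof. (One caveat: the classification is in terms of hereditarily increasing types, i.e.\ iterated $\omega$- and $\omega^*$-sums of $\hookrightarrow$-increasing sequences, which is a strictly larger class than the chains $\omega^{(\delta)}$ you name; but either way the resulting trees have $L_e\subseteq\omega$ and Theorem~\ref{fbrd-scat-all.thm.finite-rank} applies.)

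As a self-contained argument, however, your inductive sketch has a genuine gap, and it sits where you say it does. The colouring ``red = the last summands $R_{in}$, blue = the rest'' does not by itself produce a contradiction: when the block collapses (e.g.\ $R_{i0}=R_{i1}=\omega$, so $\sum_i(\omega+\omega)\equiv\omega^2$) one of the colour classes \emph{does} embed $S$, so indivisibility survives and you learn nothing; you must first characterize exactly when the finite block fails to collapse and only then exhibit a dividing colouring, which is the case analysis you defer. Separately, the step ``each surviving summand $R_i$ is itself indivisible'' is not justified and is false as stated --- an initial summand of an indivisible $\omega$-sum can be essentially arbitrary (only the tail behaviour matters, since $\sum_{i\in\omega}R_i\equiv\sum_{i\ge k}R_i$), so the induction hypothesis cannot be applied summand-by-summand without first passing to a suitable $\equiv$-representative. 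Both issues are resolved in Laver's analysis, but as written your proof does not close them.
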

\begin{proof}
  Let $S$ be a countable scattered chain of finite Hausdorff rank such that $T(1, S) = 1$.
  According to \cite[Theorem 2.13]{laver-decomposition}, there is a tree $\sigma \in \calA$
  with no finite sums such that $S \equiv \|\sigma\|$.
  (In \cite{laver-decomposition} the order type of such an $S$ is referred to as \emph{hereditarily increasing}).
  But then finite sums in $\sigma$ are bounded, whence
  $\spec(S)$ is finite by Theorem~\ref{fbrd-scat-all.thm.finite-rank}.
\end{proof}

\paragraph{Open problem.}
We have seen in Section~\ref{fbrd-scat-all.sec.inf-rank} that countable scattered chains of infinite
Hausdorff rank do not have finite big Ramsey spectra. On the other hand, we have just proved in
Theorem~\ref{fbrd-scat-all.thm.finite-rank} that countable scattered chains of finite
Hausdorff rank and \emph{with bounded finite sums} have finite big Ramsey spectra.
At the moment, we are unable to resolve the remaining case of countable scattered chains of finite
Hausdorff rank whose finite sums are not bounded.

\section*{Data availability statement}

Data sharing not applicable to this article as no datasets were generated or analysed during the current study.

\section*{Acknowledgements}

The author would like to thank F.~Galvin and S.~Todor\v cevi\'c for their many insightful observations
during the preparation of \cite{masul-sobot} which paved the road for this paper.

The author would also like to thank the two anonymous referees for the careful reading of the paper and many
constructive comments that brought much needed clarity to the last two sections of the paper.

The author gratefully acknowledges the financial support of the Ministry of Education, Science and Technological Development
of the Republic of Serbia (Grant No.\ 451-03-9/2021-14/200125).

\end{document}